\theoremstyle{theorem}
\newtheorem{thmt}{Theorem}[section]
\newtheorem{lem}[thmt]{Lemma}
\newtheorem{prop}[thmt]{Proposition}
\newtheorem{defi}[thmt]{Definition}
\newtheorem{rem}{Remark}[section]
\newtheorem*{lem*}{Lemme}
\newtheorem{cor}[thmt]{Corollary}
\newtheorem{thm}{Theorem} \setcounter{thm}{-1}
\title{}
\date{}
\DeclareRobustCommand{\cev}[1]{%
  \mathpalette\do@cev{#1}%
}
\newcommand{\do@cev}[2]{%
  \fix@cev{#1}{+}%
  \reflectbox{$\m@th#1\vec{\reflectbox{$\fix@cev{#1}{-}\m@th#1#2\fix@cev{#1}{+}$}}$}%
  \fix@cev{#1}{-}%
}
\newcommand{\fix@cev}[2]{%
  \ifx#1\displaystyle
    \mkern#23mu
  \else
    \ifx#1\textstyle
      \mkern#23mu
    \else
      \ifx#1\scriptstyle
        \mkern#22mu
      \else
        \mkern#22mu
      \fi
    \fi
  \fi
}
\def\N{{\mathbb N}}
\def\R{{\mathbb R}}
\def\C{{\mathbb C}}
\def\P{{\mathbb P}}
\def\E{{\mathbb E}}
\newtheoremstyle{ouech}
{\topsep}
{\topsep}
{\upshape} 
{} 
{\bfseries} 
{} 
{\newline}
{\thmname{#1}\thmnumber{ #2}\thmnote{#3}} 
\theoremstyle{ouech}
\theoremstyle{exemple}
\theoremstyle{ouech}
\theoremstyle{ouech}
\numberwithin{equation}{section}
\begin{document}

\title{Invariant measures of critical branching random walks in high dimension}
\author{V.~Rapenne}
\maketitle
\begin{center}
Institut Camille Jordan\\
Université Claude Bernard Lyon 1
\end{center}
\begin{abstract}
In this work, we characterize cluster-invariant point processes for critical branching spatial processes on $\R^d$ for all large enough $d$ when the motion law is $\alpha$-stable or has a finite discrete range. More precisely, when the motion is $\alpha$-stable with $\alpha\leq 2$ and the offspring law $\mu$ of the branching process has an heavy tail such that $\mu(k)\sim k^{-2-\beta}$, then we need the dimension $d$ to be strictly larger than the critical dimension $\alpha/\beta$. In particular, when the motion is Brownian and the offspring law $\mu$ has a second moment, this critical dimension is 2. Contrary to the previous work of Bramson, Cox and Greven in \cite{BCG} whose proof used PDE techniques, our proof uses probabilistic tools only.
\end{abstract}
\vspace{0.5 cm}
\textit{Key words: Branching random walk; Point processes; Invariant measures. }
\vspace{1.5 cm}

\section{Introduction}
\subsection{Definition of the model and first notation} \label{debut}
Let $\mu$ be a probability distribution on $\N$ called the "offspring law". We assume that $\mu$ is critical, that is, $\sum_{k=0}^{+\infty} k\mu(k)=1$. Let $\mathcal{P}$ be a probability distribution on $\R^d$ with $d\in\N^*$. $\mathcal{P}$ is called the "motion law". We define a discrete-time critical branching process on $\R^d$ in the following way:

We start with one particle at an initial position $x\in\R^d$. It is generation 0 of the process. Let $\mathcal{Z}_n$ be the set of particles at generation $n$. Every particle $u\in\mathcal{Z}_n$ gives birth independently to $N_u$ offsprings where $N_u$ has law $\mu$. The offsprings of $u$ jump independently of each others from the position of $u$ according to the motion law $\mathcal{P}$. All the offsprings of $\mathcal{Z}_n$ with their new positions form the $(n+1)$-th generation of the branching process. We remark that the underlying  genealogical tree $\mathbf{T}_{gen}$ of the model is a Galton-Watson tree with offspring law $\mu$. As $\mu$ is critical, $\mathbf{T}_{gen}$ will dies out a.s. For every branching process defined in this article, we use the following notation:
$$\begin{array}{ll}
|u|&:= \text{ the generation of some particle }u\\
Z_n&:=|\mathcal{Z}_n|=\text{ the total number of particles in generation }n\\
\{S_u:|u|=n\}&:=\text{ the set of particle positions in generation }n\\
Z_n(A)&:=\text{ the number of particles in the set A in generation }n
\end{array}$$

A special case of this model is when $\mathcal{P}$ is a standard Gaussian distribution. In this situation the model is called the critical Wiener branching process. One can refer to \cite{Revesz96}, \cite{Revesz98} and \cite{Revesz02} for more properties of the critical Wiener branching process. Another particular case is when $\mathcal{P}$ corresponds to the simple random walk on $\mathbb{Z}^d$. In \cite{LZ}, Lalley and Zheng studied this critical branching simple random walk conditioned on long survival when $d\geq 2$. In particular, they prove a spatial generalization of the famous Yaglom's theorem when $d\geq 3$.

In the previous paragraph, we defined a critical branching process starting from a single particle. However we can also start from a point process in $\R^d$.
Let us consider a locally finite point process $$\Theta=\sum\limits_{i\in I} \delta_{x_i}$$ where $I$ is finite or countable. For every point $x_i$, it is possible to define a critical branching process with critical offspring law $\mu$ and motion law $\mathcal{P}$ starting from an ancestor located at $x_i$. For every $i\in I$ and every $n\in\N$, we denote by $\{S_u^{(i)},|u|=n\}$ the set of positions of the $n$-th generation of the branching process starting from an ancestor located at $x_i$. We assume that these branching processes are independent of each other. This collection of critical branching processes gives us a sequence $(\Theta_n)_{n\in\N}$ of point processes on $\R^d$ . More precisely $\Theta_0=\Theta$ and at each time $n\geq 1$,
$$\Theta_n=\sum\limits_{i\in I} \sum\limits_{\substack{|u|=n}}\delta_{S^{(i)}_u}.$$
We have to be careful here because it is possible that for some $n$, $\Theta_n$ is not locally finite anymore. For example, let us consider a critical Wiener branching process. If we start with a Poisson point process $\Theta_0$ of intensity $e^{||x||^3}dx$, even $\Theta_1$ is not a point process anymore. However $(\Theta_n)_{n\in\N}$ is a sequence of locally finite point processes when $\Theta$ is a Poisson point process of constant intensity. Indeed in this situation, if $A$ is a closed ball of $\R^d$, $\E\left[\Theta_n(A)\right]$ remains constant and proportional to the volume of $A$ because $\mu$ is critical.
One can refer to \cite{Kallenberg_modern} or \cite{MKM} for more information on point processes. Moreover, we offer a short warm-up about point processes in subsection \ref{warmup} for this article to be self-contained.

A critical branching process starting from a point process is a special case of a more general theory known as the theory of cluster fields. This theory was originally developped by Liemant, Kerstan, Matthes and Prehn in several papers (in German), e.g \cite{Liemant} and \cite{KMP}. The book \cite{MKM} summarizes most of important facts about this theory. (Especially chapters 11 and 12.) A very important issue about point processes is to know whether $(\Theta_n)_{n\in\N}$ converges toward a non-trivial point process or not. This is strongly linked with the concept of stability which is studied in \cite{kallenberg_article}. It is well-known that in dimension 1 or 2, if the motion $\mathcal{P}$ is Gaussian, $(\Theta_n)_{n\in\N}$ converges in law in the vague sense toward the trivial null measure.  (See for example sections 7 and 8 of \cite{Dur} in a continuous time setting.) On the contrary, in dimension $d\geq 3$, $$\Theta_n\xrightarrow[n\rightarrow+\infty]{law}\Lambda$$ where $\Lambda$ is non-trivial. (See \cite{DI} in a continuous-time setting.) This limiting point process $\Lambda$ is cluster-invariant, that is, if $\Theta_0=\Lambda$, then $\Theta_n$ is distributed as $\Lambda$ for every $n\in\N^*$. This raises the natural question of classification of all cluster-invariant point processes. If we assume spatial stationarity  or boundedness of the intensity of the initial process $\Theta_0$, the classification of cluster-invariant point processes is a well-known fact in a very general setting. For example, on can refer to Theorem 12.4.6 in \cite{MKM}. However, spatial stationarity  is a very strong hypothesis. The classification of cluster-invariant point processes for critical branching Brownian motion is treated without any extra assumption like spatial stationarity in \cite{BCG}. Their proof is given in a continuous time setting and is based on PDE's techniques. Our article aims to characterize all cluster-invariant point processes in a more general setting by using only probabilistic tools. We also mention that the method which is used in this article is inspired by \cite{chen} which treats 1-dimensional binary branching Brownian motion with critical drift.

\subsection{Previous results}
\label{previous}
Let us recall the known result on critical branching Brownian motion in continuous time which is defined as follows:
\begin{itemize}
\item It starts with one particle.
\item Each particle dies at rate 1.
\item At the end of its life, a particle is replaced by 0 particle with probability $1/2$ and by 2 particles with probability $1/2$.
\item During its lifetime, a particle moves like a $d$-dimensional standard Brownian motion.
\end{itemize}
Let us summarize the following results of \cite{Dur}, \cite{DI} and \cite{BCG}. 
\begin{thm}[\cite{Dur}, \cite{DI} and \cite{BCG}] \label{originalth}
If we consider a critical branching Brownian motion starting from a Poisson point process $\Theta_0$ of constant intensity $\theta$, it induces a continuous time family of point processes $(\Theta_t)_{t\geq 0}$ by collecting the positions of the alive particles at time $t$.
Then we have the following results:
\begin{enumerate}[(i)]
\item If $d\in\{1,2\}$, then $(\Theta_t)_{t\geq 0}$ converges vaguely toward the null point process.
\item If $d\geq 3$, then $(\Theta_t)_{t\geq 0}$ converges vaguely toward a non-trivial point process $\tilde{\Lambda}^{d,\theta}_{\infty}$.
\end{enumerate}
Furthermore, it characterizes the set of cluster-invariant point processes, that is, point processes $\Theta$ such that $\Theta_t\overset{law}=\Theta$ for every $t\in\R_+$ if $\Theta_0=\Theta$. Indeed, $\{\mathcal{L}\left(\Theta \right), \Theta\text{ is cluster-invariant}\}$ is a convex set whose extremal points are $$\{\mathcal{L}\left(\tilde{\Lambda}^{d,\theta}_{\infty} \right), \theta\in\R_+\}.$$
\end{thm}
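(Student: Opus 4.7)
Because $\Theta_0$ is a Poisson point process of intensity $\theta$, the process $\Theta_t$ is itself a Poisson cluster process whose independent clusters $\Theta_t^{(x)}$ are copies of the single-ancestor critical branching Brownian motion at time $t$ issued from $x\in\R^d$. The Laplace functional then factorizes as
\begin{equation*}
\E\bigl[e^{-\Theta_t(f)}\bigr] = \exp\!\left(-\theta\int_{\R^d}\bigl(1-\E\bigl[e^{-\Theta_t^{(x)}(f)}\bigr]\bigr)\,dx\right),
\end{equation*}
so vague convergence of $\Theta_t$ reduces to convergence of $I_t(f):=\int_{\R^d}(1-\E[e^{-\Theta_t^{(x)}(f)}])\,dx$ for every nonnegative continuous $f$ of compact support. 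The plan is to analyze this integral in the two dimensional regimes, and then to bootstrap it into a classification of cluster-invariant laws.

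For the dichotomy (i)--(ii), I would use a first- and second-moment analysis. Criticality plus translation invariance give $\int_{\R^d}\E[\Theta_t^{(x)}(A)]\,dx = |A|$, so that $\E[\Theta_t(A)]=\theta|A|$ is constant in $t$. The decisive object is the second moment: a many-to-two decomposition of critical branching Brownian motion, which keeps track of the most recent common ancestor of an ordered pair of descendants, yields
\begin{equation*}
\int_{\R^d}\E\bigl[\Theta_t^{(x)}(A)^2\bigr]\,dx = |A| + c_\mu\int_0^t\!\int_{\R^d}\bigl(p_{t-s}\!*\!\mathbf{1}_A\bigr)(y)^2\,dy\,ds,
\end{equation*}
where $p_s$ is the heat kernel and $c_\mu>0$ is related to the variance of the offspring law. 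The inner integral is of order $(t-s)^{-d/2}|A|^2$ for small $t-s$, so the full integral diverges in $t$ exactly when $d\le 2$. Hence in low dimension $\mathrm{Var}(\Theta_t(A))\to\infty$ while its mean stays bounded; combined with the Poisson cluster structure, a standard compactness lemma for infinitely divisible integer-valued random variables forces $\Theta_t(A)\to0$ in probability, giving (i). When $d\ge 3$ the same integral is bounded in $t$, the map $t\mapsto I_t(f)$ is monotone nondecreasing by the Markov/branching property, so $I_t(f)$ converges to a finite $I_\infty(f)$; the Laplace functional $\exp(-\theta I_\infty(f))$ defines a non-trivial $\tilde\Lambda^{d,\theta}_\infty$ whose intensity is still $\theta\,dx$.

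For the classification (iii), the first step is to show that every cluster-invariant $\Theta$ is infinitely divisible. Iterating cluster-invariance, $\Theta$ has the same law as the superposition of the clusters issued from its own points, which themselves split into finer and finer sub-clusters of vanishing individual mass; an accompanying-law argument (in the spirit of the general theory surveyed in \cite{MKM}) yields a Lévy/KLM canonical representation of $\Theta$ as a Poisson superposition of i.i.d.\ clusters drawn from some $\sigma$-finite $Q$ on configurations. Cluster invariance transfers to $Q$ as a one-step fixed-point equation. The intensity measure of $\Theta$ is then harmonic for the heat semigroup, so by Liouville it equals $\theta\,dx$ for some $\theta\in\R_+$. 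It only remains to show that the invariant canonical measure with given intensity is unique, which forces $Q$ to coincide with the one defining $\tilde\Lambda^{d,\theta}_\infty$.

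The main obstacle is precisely this last uniqueness step. Unlike Theorem~12.4.6 of \cite{MKM}, we cannot assume any spatial stationarity of $\Theta$, so a priori $Q$ could charge exotic configurations escaping to infinity. Inspired by \cite{chen}, the plan is a backward-in-time argument along the genealogy: conditional on the positions of the descendants in generation $n$, the ancestor location is approximately Gaussian on scale $\sqrt{n}$, so its conditional law spreads out and forces asymptotic translation invariance of $Q$ as $n\to\infty$, pinning it down uniquely in terms of $\theta$. This is also where the constraint $d\ge 3$ enters essentially, since in dimensions one and two the spatial spread cannot compensate the recurrence of the genealogical backbone and only the null measure survives as a cluster-invariant law.
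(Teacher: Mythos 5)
Before anything else, note that the paper does not prove Theorem~\ref{originalth}: it is a cited result, with (i) from \cite{Dur}, (ii) from \cite{DI}, and the classification from \cite{BCG}. What the paper actually proves are the discrete-time analogues, Theorems~\ref{cv} and \ref{charac}, and it does so by a different route than both your plan and the PDE arguments of \cite{BCG}: a size-biased change of measure and spine decomposition (Proposition~\ref{change}, Lemma~\ref{funlemma}) replaces moment/PDE analysis, and the classification is obtained via direct Laplace-functional manipulation, a tightness bound on $\int\mathbf{P}(Z_n(A-x)\geq1)\,\Theta(dx)$ (Lemma~\ref{tightness}), decay estimates for far starting points (Lemmas~\ref{recu}, \ref{majoc1}), and a Prokhorov subsequential limit.

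Your plan for (i)--(ii) is essentially sound and close in spirit to \cite{Dur}/\cite{DI}, modulo two sign-and-regime slips: the cumulant $I_t(f)=\int(1-\E[e^{-\Theta_t^{(x)}(f)}])\,dx$ is \emph{non-increasing} in $t$ (for binary branching it satisfies $\partial_t\int u_t = -\tfrac12\int u_t^2\le 0$), not nondecreasing; and in the second-moment integral the dangerous contribution is from \emph{large} $t-s$, where $\int(p_{t-s}*\mathbf 1_A)^2\asymp|A|^2(t-s)^{-d/2}$, not from small $t-s$ (where the integrand is just $O(|A|)$). Neither slip destroys the dichotomy, and non-triviality in $d\ge3$ still needs a lower bound on the limit, which the paper's key lemma supplies via the spine.

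The real gap is in the classification. Your ``first step'' --- that every cluster-invariant $\Theta$ is infinitely divisible, hence admits a KLM representation with a $\sigma$-finite canonical measure $Q$ --- is false in exactly the generality the theorem requires. By Theorem~\ref{charac}, the cluster-invariant laws are $\{\mathcal L(\Lambda^{d,X}_\infty):X\geq 0\}$; when $X$ is a non-constant, non-infinitely-divisible random variable (say uniform on $[0,1]$), $\Lambda^{d,X}_\infty$ is a genuine mixture over $\theta$ and is \emph{not} infinitely divisible, yet it is cluster-invariant. Conditionally on $\Theta$ the clustered process is a sum of independent clusters, but that gives infinite divisibility only conditionally; unconditionally you only get a \emph{mixture} of infinitely divisible laws, and the mixing variable is precisely the unknown $X$ you must identify. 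The KLM route therefore cannot be applied directly to $\Theta$ --- at best to the extremal laws, but identifying those as $\Lambda^{d,\theta}_\infty$ is the whole point. Relatedly, your Liouville step presupposes $\E[\Theta(A)]<\infty$ for compacts $A$, which is not part of the hypotheses; the paper replaces this by the almost-sure finiteness and tightness of the random quantity $\int\mathbf{P}(Z_n(A-x)\geq1)\,\Theta(dx)$ (Lemma~\ref{tightness}), which requires no moment assumption. The paper's argument sidesteps both issues: cluster-invariance is applied to the Laplace functional of $\Theta$ directly, the far-field contribution is controlled by Lemma~\ref{majoc1}, a subsequential weak limit $Y_A$ of $\int_{B(0,M\sqrt n)}\mathbf{P}(Z_n(A-x)\geq1)\,\Theta(dx)$ gives $\Theta(\cdot\cap A)\overset{law}=\Lambda^{d,X_A}_\infty(\cdot\cap A)$ with $X_A=Y_A/I_A$, and consistency $X_A\overset{law}=X_B$ for $A\subset B$ is checked using $I_AR_{f,A}=I_BR_{f,B}$; the random $X$ appears as the subsequential limit, no infinite divisibility ever being invoked.
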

\begin{rem}
The first point (i) stems from sections 7 and 8 of \cite{Dur}. The second point (ii) is Theorem 7.1 in \cite{DI}. Moreover, characterization of cluster-invariant point processes is Theorem 1 in \cite{BCG}.
\end{rem}
\subsection{Main results}
Now let us state our results which generalize Theorem \ref{originalth} in the discrete time setting. Indeed we work with a general critical offspring law $\mu$ and a quite general motion law $\mathcal{P}$. We consider the following three cases for $\mathcal{P}$  and $\mu$ in this paper.

\begin{enumerate}
\item Hypothesis $\mathcal{H}_1$: The distribution $\mathcal{P}$ is a standard $d$-dimensional Gaussian random variable with $d\geq 3$. Moreover $\sigma^2:=\sum_{k=0}^{+\infty}k^2\mu(k)<+\infty$. Further, we write $\Sigma=I_d$, where $I_d$ is the identity matrix.
\item Hypothesis $\mathcal{H}_2$: We assume that $\mathcal{P}$ can be written as $$\mathcal{P}=\sum_{x\in \mathcal{R}}p(x)\delta_x$$ where $\mathcal{R}$ is a finite subset of $\mathbb{Z}^d$ with $d\geq 3$. Moreover $p$ is symmetrical in the sense that for every $x\in\mathcal{R}$, $-x\in\mathcal{R}$ and $p(-x)=p(x)$. $\mathcal{P}$ has a positive definite covariance matrix $\Sigma$. In addition, we assume that the random walk generated by the motion $\mathcal{P}$ is aperiodic. Furthermore $\sigma^2:=\sum_{k=0}^{+\infty}k^2\mu(k)<+\infty$.
\item Hypothesis $\mathcal{H}_3$:  The motion law $\mathcal{P}$ is given by a spherically symmetric $\alpha$-stable law with $\alpha\in]0,2[$. More precisely for every $y\in\R^d$, $$\int\exp\left(i\langle y,x \rangle \right)\mathcal{P}(dx)=\exp\left(-\left(\sum\limits_{k=1}^d|y_k|^2\right)^{\alpha/2}\right).$$ The critical offspring law $\mu$ has no second moment anymore. However we assume that there exists $\beta\in]0,1]$ such that for every $\gamma<\beta$, $\sum_{k=0}^{+\infty} k^{1+\gamma}\mu(k)<+\infty$. Moreover, we assume that $d> \alpha/\beta$.
\end{enumerate}

Here, hypothesis $\mathcal{H}_2$ is a generalization of hypothesis $\mathcal{H}_1$. Indeed, in both cases, $\mathcal{P}$ has a second moment. Consequently it is in the domain of attraction of a Gaussian distribution. However, having a second moment is not enough for us. Indeed, in our arguments, we need the random walk associated to $\mathcal{P}$ to meet the heat-kernel estimate stated in Proposition \ref{heat}. Our results hold for any distribution $\mathcal{P}$ which has a finite second moment and satisfies this heat-kernel estimate. For now, we do not know how to remove this technical assumption. Under hypotheses $\mathcal{H}_1$ and $\mathcal{H}_2$, as $\mathcal{P}$ is in the domain of attraction of a Gaussian distribution, the critical branching process with motion law $\mathcal{P}$ is analoguous to a binary branching Brownian motion. In particular, assuming $\mathcal{H}_1$ or $\mathcal{H}_2$, the critical dimension is always 2 as in Theorem \ref{originalth}. On the contrary, under hypothesis $\mathcal{H}_3$, $\mathcal{P}$ is no more in the Brownian domain. This leads to a change of the critical dimension, among other consequences. Indeed, assuming hypothesis $\mathcal{H}_3$, Gorostiza and Wakolbinger showed in \cite{GW} that the critical dimension is $\alpha/\beta$.

Now, let $X$ be a non-negative random variable. If we assume hypotheses $\mathcal{H}_1$ or $\mathcal{H}_3$, let $\Lambda_0^{d,X}$ be a Poisson point process with distribution $$PPP(X\lambda(dx))$$ where $\lambda$ is the Lebesgue measure. If we assume hypothesis $\mathcal{H}_2$, let $\Lambda_0^{d,X}$ be the discrete Poisson point process $$PPP\left(X\sum\limits_{x\in\mathbb{Z}^d}\delta_x\right).$$
As in subsection \ref{debut}, we can define the sequence $(\Lambda_n^{d,X})_{n\in\N}$ by considering the critical branching process starting from $\Lambda_0^{d,X}$. In this paper, a "closed ball" designates a euclidean closed ball of $\R^d$. Under hypothesis $\mathcal{H}_2$, we always assume that a closed ball contains at least one point of $\mathbb{Z}^d$.
\begin{thm}[Convergence Theorem]\label{cv}
We assume hypotheses $\mathcal{H}_1$ or $\mathcal{H}_2$. Then, there exists a non-trivial point process $\Lambda^{d,X}_{\infty}$ such that\\
$$ \Lambda_n^{d,X}\xrightarrow[n\rightarrow+\infty]{law}\Lambda^{d,X}_{\infty}$$
in the vague topology.
In addition, the point process $\Lambda^{d,X}_{\infty}$ can be described in the following way on every closed ball $A$:\\
$$\Lambda_{\infty}^{d,X}(\cdot\cap A)=\sum\limits_{i=1}^{P_A}\mathcal{N}_A^{(i)} $$
where $(\mathcal{N}_A^{(i)})_{i\geq 1}$ are i.i.d copies of a point process $\mathcal{N}_A$ defined in Proposition \ref{limitpp} and $P_A$ is a Poisson random variable of parameter $ X I_A$ where $I_A$ is a constant defined in definition \ref{defiprat}. $P_A$ and $(\mathcal{N}_A^{(i)})_{i\geq 1}$ are assumed to be independent.

\end{thm}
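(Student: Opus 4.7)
Vague convergence of $\Lambda_n^{d,X}$ reduces to showing that the restriction $\Lambda_n^{d,X}(\cdot\cap A)$ converges in law for every fixed closed ball $A$, and this in turn amounts to convergence of the Laplace functional
\[
\psi_n(f):=\E\!\left[\exp\!\left(-\int f\,d\Lambda_n^{d,X}\right)\right]
\]
for every non-negative continuous $f$ supported in $A$. Because $\Lambda_0^{d,X}$ is a Poisson point process conditionally on $X$ and the clusters issued by distinct ancestors are independent, the Poisson superposition principle gives
\[
\psi_n(f)=\E\!\left[\exp(-X J_n(f))\right],\qquad J_n(f):=\int\!\left(1-\phi_n(x;f)\right)\,dx,
\]
where $\phi_n(x;f):=\E_x\!\left[\exp(-\sum_{|u|=n}f(S_u))\right]$ is the Laplace functional of the $n$-th generation of a cluster started at $x$, and the integral is taken against Lebesgue or counting measure according to whether $\mathcal{H}_1$ or $\mathcal{H}_2$ is assumed. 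Everything thus reduces to the convergence of $J_n(f)$ and to the identification of the limit.

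\noindent\textbf{Step 1: monotonicity of $J_n$.} Conditioning on generation $1$ yields the standard branching identity $\phi_{n+1}(x;f)=G_\mu\!\left(\int\phi_n(x+y;f)\,\mathcal{P}(dy)\right)$, where $G_\mu$ is the probability generating function of $\mu$. Criticality of $\mu$ ($G_\mu(1)=G_\mu'(1)=1$) combined with convexity of $G_\mu$ on $[0,1]$ gives $G_\mu(1-u)\geq 1-u$, so that
\[
1-\phi_{n+1}(x;f)\leq \int(1-\phi_n(x+y;f))\,\mathcal{P}(dy).
\]
Integrating against translation-invariant measure and using Fubini yields the monotonicity $J_{n+1}(f)\leq J_n(f)$, while the many-to-one identity $\E_x[\sum_{|u|=n}f(S_u)]=\int f(y)\,p^{*n}(x,y)\,dy$ (using $\E[Z_n]=1$) supplies the uniform bound $J_n(f)\leq\int f$. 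Monotone convergence then produces $J_\infty(f):=\lim_n J_n(f)\in[0,\int f]$, and therefore $\psi_n(f)\to\E[\exp(-XJ_\infty(f))]$, which together with the a priori bound $\E[\Lambda_n^{d,X}(A)]=X|A|$ (hence tightness) gives vague convergence to a limit $\Lambda_\infty^{d,X}$.

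\noindent\textbf{Step 2: Poisson-cluster representation.} For each finite $n$, Poisson thinning according to the indicator that the induced cluster meets $A$ rewrites $\Lambda_n^{d,X}(\cdot\cap A)$ in the form $\sum_{i=1}^{P_A^{(n)}}\mathcal{N}_A^{(n,i)}$, with $P_A^{(n)}\sim\text{Poisson}(X\,I_A^{(n)})$, where $I_A^{(n)}:=\int\P_x[\mathcal{Z}_n\cap A\neq\emptyset]\,dx=\lim_{t\to\infty}J_n(t\mathbf{1}_A)$, and the $\mathcal{N}_A^{(n,i)}$ are i.i.d.\ copies of the cluster conditioned on being non-empty in $A$. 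Joint monotonicity of $J_n(t\mathbf{1}_A)$ in both $n$ and $t$ legitimates the exchange of limits and gives $I_A=\lim_n I_A^{(n)}\in[0,|A|]$ together with a limiting cluster distribution $\mathcal{N}_A$, yielding the announced description of $\Lambda_\infty^{d,X}(\cdot\cap A)$.

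\noindent\textbf{Step 3: non-triviality.} The one genuinely delicate point is the strict positivity $I_A>0$, which is what distinguishes the supercritical dimensions from the recurrent ones. I would derive a uniform lower bound on $I_A^{(n)}$ via a Paley-Zygmund inequality applied to $|\mathcal{Z}_n\cap A|$: its first moment equals $p^{*n}(x,A)$, and its second moment is controlled through a many-to-two decomposition producing a Green-type sum of squared transition kernels. Proposition \ref{heat} delivers exactly the heat-kernel estimate needed for this Green sum to converge in the regime $d\geq 3$ imposed by $\mathcal{H}_1$ and $\mathcal{H}_2$, yielding $\liminf_n I_A^{(n)}>0$. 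I expect this survival-type estimate to be the main technical burden of the proof; once it is established, the cluster description of Proposition \ref{limitpp} for $\mathcal{N}_A$ follows as a byproduct of Step 2 and the theorem is complete.
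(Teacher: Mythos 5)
Your Step~1 is correct and is a genuinely different route from the paper's: by exploiting $G_\mu(1-u)\geq 1-u$ (criticality plus convexity) you obtain monotonicity of $J_n(f)$ in $n$, which gives the existence of $J_\infty(f)$ for free, whereas the paper instead extracts a sharp asymptotic for $\E_{\mathbf P}\bigl[\mathbf 1\{Z_n(A-x)\geq 1\}\exp(\sum_{|u|=n}f(S_u+x))\bigr]$ via the spine change of measure (Lemma~\ref{funlemma}) and then integrates it (Lemma~\ref{intensity}). Your soft argument buys you convergence with almost no dimension-dependent work; the paper's hard estimate buys you the uniform-in-$x$ control on $B(0,M\sqrt n)$ that is re-used to prove Theorem~\ref{charac}, and in particular it produces the explicit constant $I_{A,f}$ and proves $I_A>0$ \emph{at the same time}, since $I_A$ is exhibited as an integral of a strictly positive function.

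That said, there are two genuine gaps. First, in Step~2 the assertion that ``joint monotonicity of $J_n(t\mathbf 1_A)$ in $n$ and $t$ legitimates the exchange of limits'' is not valid as stated: a bounded function decreasing in $n$ and increasing in $t$ can have unequal iterated limits (e.g.\ $\mathbf 1\{t>n\}$), and pointwise monotonicity of the integrand in $n$ — which would justify a monotone-convergence swap — is precisely what you do not have, since only the integral $J_n$ is monotone, not $x\mapsto 1-\phi_n(x;f)$. Second, and more seriously, the whole content of the theorem (non-triviality of the limit, i.e.\ $I_A>0$, and the identification of the Poissonian intensity with the specific $I_A$ of Definition~\ref{defiprat} and of the cluster law with the specific $\mathcal N_A$ of Proposition~\ref{limitpp}) is confined to your Step~3, which you openly leave as a sketch. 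You would indeed need a Paley--Zygmund/second-moment argument with heat-kernel control for the lower bound, plus an argument identifying your abstractly constructed limit cluster with the one of Proposition~\ref{limitpp} (which is defined through conditioning a single ancestor placed at $a\sqrt n$, not through the averaged ancestor position that your thinning produces). In the paper both tasks are discharged simultaneously by Lemma~\ref{funlemma}; in your scheme they remain the bulk of the proof and are not done.
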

\begin{rem}
Actually, the heat kernel estimate in $\mathcal{H}_2$ is not really necessary for Theorem \ref{cv}. We see in the proof of Lemma \ref{intensity} that we only need the existence of a positive integrable function $g$ such that for every $x\in\mathbb{Z}^d$ and for every $n\in\N^*$, $\P(\hat{S}_n=x)\leq n^{-d/2}g(n^{-1/2}x)$ where $\hat{S}$ is the random walk associated to $\mathcal{P}$. This is possible as soon as $\mathcal{P}$ has a moment of order $d+1$ thanks to the asymptotic expansion of the local-limit theorem. (See Theorem 22.1 in \cite{Rao}.) However, assumption $\mathcal{H}_2$ is crucial in the proof of Theorem \ref{charac} for technical reasons.
\end{rem}
A point process $\Theta$ is said to be cluster-invariant if the sequence $(\Theta_n)_{n\in\N}$ obtained by the critical branching process starting from $\Theta_0=\Theta$ as in subsection \ref{debut} satisfies $$\Theta_n\overset{law}=\Theta$$ for every $n\in\N$. One sees easily that the limiting point processes obtained in Theorems \ref{originalth} and \ref{cv} are cluster-invariant. In fact, all cluster-invariant point processes are given by the following theorem:
\begin{thm}[Characterization of cluster-invariant measures] \label{charac}
Let us assume hypotheses $\mathcal{H}_1$ or $\mathcal{H}_2$.
Let $\Theta$ be a cluster-invariant point process. Then there exists a non-negative random variable $X$ such that:
$$\Theta\overset{law}=\Lambda^{d,X}_{\infty}. $$
Moreover $\Lambda^{d,X}_{\infty}$ is cluster-invariant for every non-negative random variable $X$.
\end{thm}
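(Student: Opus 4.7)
The forward implication---that $\Lambda_\infty^{d,X}$ is cluster-invariant for every $X$---should be a short continuity argument. Writing $T$ for the one-step branching operator on laws of point processes, the semigroup identity $T\mathcal{L}(\Lambda_n^{d,X})=\mathcal{L}(\Lambda_{n+1}^{d,X})$ holds for every $n$, and the convergence $\Lambda_n^{d,X}\to\Lambda_\infty^{d,X}$ from Theorem~\ref{cv}, combined with continuity of $T$ at the level of Laplace functionals of compactly supported test functions (via the substitution $\phi\mapsto -\log\mathbb{E}_\cdot[\exp(-\sum_{|u|=1}\phi(S_u))]$), gives $T\mathcal{L}(\Lambda_\infty^{d,X})=\mathcal{L}(\Lambda_\infty^{d,X})$.

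The converse is the heart of the result. Let $\Theta=\sum_i\delta_{x_i}$ be cluster-invariant. The first step is to pin down the intensity: from $\Theta\overset{law}=\Theta_n$ and $\mathbb{E}[\Theta_n(A)\mid\Theta]=\sum_i p_n(x_i,A)$, with $p_n$ the $n$-step motion kernel, the intensity measure of $\Theta$ must be harmonic for $p_1$, hence a constant multiple of Lebesgue measure on $\mathbb{R}^d$ (respectively of $\sum_{x\in\mathbb{Z}^d}\delta_x$) by the Liouville property for the walk. Having the intensity under control, I would then compute the Laplace functional of $\Theta$ using cluster-invariance iteratively: for $\phi\geq 0$ supported in a closed ball $A$,
\[
\mathbb{E}\bigl[e^{-\langle\Theta,\phi\rangle}\bigr]=\mathbb{E}\Bigl[\prod_i\bigl(1-F_n(x_i,\phi)\bigr)\Bigr],\qquad F_n(x,\phi):=1-\mathbb{E}_x\bigl[e^{-\sum_{|u|=n}\phi(S_u)}\bigr],
\]
and then analyse the large-$n$ asymptotics of the right-hand side.

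Yaglom-type asymptotics for the critical Galton--Watson tree (survival probability of order $1/n$), combined with the heat-kernel estimate of Proposition~\ref{heat}, should yield a precise limit $n^{d/2}F_n(\sqrt{n}\,y,\phi)\to G_\infty(y,\phi)$ uniformly on bounded sets of $y$, with $G_\infty$ encoding the limiting cluster law $\mathcal{N}_A$ from Theorem~\ref{cv}. The probabilistic picture is then a Poisson-limit one: among the ancestors $x_i$ of $\Theta$, only those at distance $O(\sqrt{n})$ from $A$ can seed a descendant in $A$ at generation $n$, each with vanishing probability of order $n^{-d/2}$. A Le\,Cam-type estimate applied conditionally on $\Theta$ identifies the limit of $\prod_i(1-F_n(x_i,\phi))$ as the Laplace functional of a compound Poisson with random intensity
\[
X:=\lim_{R\to\infty}\frac{\Theta(B_R)}{\mathrm{vol}(B_R)}
\]
and independent clusters of law $\mathcal{N}_A$, which by Theorem~\ref{cv} is exactly the Laplace functional of $\Lambda_\infty^{d,X}$ restricted to $A$.

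The main obstacle will be the interchange of limits implicit in taking $n\to\infty$ inside the expectation of $\prod_i(1-F_n(x_i,\phi))$: one must dominate the joint contribution of ancestors at distances $\gg\sqrt{n}$ from $A$ and control the tail of the Yaglom conditional density uniformly in $x$. This is precisely where the heat-kernel estimate in $\mathcal{H}_2$ (and its easy Gaussian analogue under $\mathcal{H}_1$) is indispensable, as noted in the remark accompanying Theorem~\ref{cv}. A secondary difficulty, resolved along the way, is to show that the limit $X$ above exists almost surely for an arbitrary cluster-invariant $\Theta$ with no spatial stationarity assumption; this should emerge from the Laplace-functional identification combined with a martingale-type argument applied to the spatial averages $\Theta(B_R)/\mathrm{vol}(B_R)$.
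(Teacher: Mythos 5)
Your overall skeleton matches the paper's: expand the Laplace functional of $\Theta_n$ over ancestors, discard those far from $A$, feed in the asymptotics $n^{d/2}\mathbf{P}(Z_n(A-x)\geq 1)\to$ Gaussian-kernel $\times I_A$ from Lemma~\ref{funlemma}, and recognize a compound-Poisson limit. The forward implication (invariance of $\Lambda_\infty^{d,X}$) by continuity of the one-step operator on Laplace functionals is also fine. But there is a genuine gap in how you produce the mixing variable $X$, and a secondary gap in the ``pin down the intensity'' step.

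The central gap is that you \emph{define} $X:=\lim_{R\to\infty}\Theta(B_R)/\mathrm{vol}(B_R)$ and remark that its almost-sure existence ``should emerge from the Laplace-functional identification combined with a martingale-type argument.'' Without spatial stationarity (which the paper explicitly refuses to assume) there is no ergodic theorem to invoke, and the existence of this spatial density is, if anything, a \emph{consequence} of the theorem you are trying to prove, not an input to it. The paper circumvents this entirely: it works with the Gaussian-weighted averages $W_n:=\int\mathbf{P}(Z_n(A-x)\geq 1)\,\Theta(dx)$ (which are exactly what the expansion of the Laplace functional hands you), proves that $(W_n)_n$ is \emph{tight} using only cluster-invariance and a simple Laplace-transform bound (Lemma~\ref{tightness}), extracts a subsequential limit $Y_A$ via Prokhorov, sets $X_A:=Y_A/I_A$, and then shows $A\mapsto X_A$ is in fact constant in law by equalizing the two Laplace transforms of $\Theta$ restricted to nested balls via Lemmas~\ref{quot} and~\ref{lemi}. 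Replacing ``$W_n$ converges along a subsequence'' by ``$\Theta(B_R)/\mathrm{vol}(B_R)$ converges a.s.'' is not a cosmetic change: the latter is strictly stronger, is not implied by cluster-invariance alone, and is precisely the kind of assumption (in the guise of stationarity) the theorem is designed to dispense with.

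A secondary gap is the Liouville step. You want to show the intensity measure $\E[\Theta(\cdot)]$ is harmonic for $p_1$, hence a constant multiple of Lebesgue (resp.\ counting) measure. But there is no a priori guarantee that $\E[\Theta(A)]<\infty$ for compact $A$; indeed the conclusion of the theorem allows $\E[X]=+\infty$, and in that case $\Lambda_\infty^{d,X}$ has an everywhere-infinite intensity measure even though it is a.s.\ locally finite. So the harmonicity argument cannot get started in general, and is not needed: the paper never touches the intensity measure of $\Theta$. You also need to be a little careful with the Yaglom heuristic: the $n^{-d/2}$ decay of $\mathbf{P}(Z_n(A-x)\geq 1)$ is \emph{not} the product of the Kolmogorov $1/n$ survival rate with a $n^{-(d-2)/2}$ heat-kernel factor; it comes from the many-to-one identity $\E[Z_n(A-x)]\sim c\,n^{-d/2}$ together with the fact that, conditionally on $\{Z_n(A-x)\geq 1\}$, the occupation number has a nondegenerate limit, which the paper extracts cleanly from the spine decomposition and the change of measure to $\mathbf{Q}^*$ (Lemma~\ref{funlemma}).
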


Theorems \ref{cv} and \ref{charac} assumed hypothesis $\mathcal{H}_1$ or hypothesis $\mathcal{H}_2$. Under these hypotheses, the proofs are quite similar. Under the hypothesis $\mathcal{H}_3$, the results of Theorems \ref{cv} and \ref{charac} remain true. However proofs require some slight modifications. We focus on these modifications in section \ref{gene}.
\begin{thm}\label{general}
Let us assume hypothesis $\mathcal{H}_3$. Then the conclusions of Theorems \ref{cv} and \ref{charac} remain true.
\end{thm}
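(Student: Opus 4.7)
\textbf{Proof plan for Theorem \ref{general}.} The plan is to reuse the architecture developed for Theorems \ref{cv} and \ref{charac} under hypotheses $\mathcal{H}_1,\mathcal{H}_2$, replacing every Gaussian/second-moment ingredient by its $\alpha$-stable/fractional-moment analogue, and then verifying that the threshold $d>\alpha/\beta$ is exactly what keeps the relevant generational sums convergent.

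First I would swap the quantitative inputs. The Gaussian heat-kernel estimate of Proposition \ref{heat} is replaced by the classical bound on the density $p_n$ of the $n$-fold convolution of the symmetric $\alpha$-stable law, namely $p_n(x)\leq n^{-d/\alpha}g_\alpha(n^{-1/\alpha}x)$ with $g_\alpha$ bounded, continuous and satisfying $g_\alpha(y)\leq C(1+|y|)^{-d-\alpha}$. The Kolmogorov--Yaglom asymptotics $\P(Z_n>0)\sim 2/(\sigma^2 n)$ and $n^{-1}Z_n\mid\{Z_n>0\}\Rightarrow\mathrm{Exp}$ are replaced by their Slack-type analogues $\P(Z_n>0)\sim c\,n^{-1/\beta}$ and $n^{-1/\beta}Z_n\mid\{Z_n>0\}\Rightarrow W_\beta$ (a Mittag-Leffler-type limit), both available under the assumption $\sum_k k^{1+\gamma}\mu(k)<+\infty$ for every $\gamma<\beta$. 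Likewise the second-moment bound $\E[Z_n^2]\leq Cn$ is replaced by the fractional-moment bound $\E[Z_n^{1+\gamma}]\leq Cn^{\gamma/\beta}$, valid for every $\gamma<\beta$.

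Equipped with these inputs I would rerun the proof of Theorem \ref{cv}: the intensity constant $I_A$ of Definition \ref{defiprat} and the i.i.d.\ clusters $\mathcal{N}_A^{(i)}$ are obtained by passing to the limit in the Laplace functional of $\Lambda_n^{d,X}$ restricted to a closed ball $A$, the key input being that clusters started from well-separated ancestors asymptotically decorrelate. Every generation-level estimate that scaled as $n^{-d/2}$ under $\mathcal{H}_1,\mathcal{H}_2$ now scales as $n^{-d/\alpha}$, and the main integrability check reduces to a sum of the form
\begin{equation*}
\sum_{n\geq 1} n^{-d/\alpha}\cdot n^{1/\beta - 1}\;<\;+\infty,
\end{equation*}
which converges if and only if $d/\alpha > 1/\beta$, i.e.\ exactly under $\mathcal{H}_3$. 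Once convergence is in hand, the characterization part is essentially soft: I would transpose the proof of Theorem \ref{charac} verbatim, writing a cluster-invariant $\Theta$ as a mixture of Poisson processes via a backward martingale argument on the ball-counts $\Theta_n(A)$ and reading off the mixing random variable $X$ from the factorial-moment structure of $\Lambda_\infty^{d,X}$.

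The main obstacle is the systematic replacement of second moments by fractional ones. Several steps in the $\mathcal{H}_1,\mathcal{H}_2$ proof invoke $\E[Z_n^2]$ to truncate, to compare clusters, or to control variances, and this quantity is infinite under $\mathcal{H}_3$. Each such step has to be redone with $\E[Z_n^{1+\gamma}]$ for some $\gamma<\beta$ chosen close enough to $\beta$ so that the corresponding sum over generations still converges under the single hypothesis $d>\alpha/\beta$. Checking that such a choice of $\gamma$ is always available, and that the $(1+\gamma)$-moment versions of the variance-type bounds are still strong enough to drive the truncation and Laplace-functional arguments of the Gaussian proof, is where the technical content of section \ref{gene} concentrates.
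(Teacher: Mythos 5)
Your high-level plan (replace $n^{-d/2}$ by $n^{-d/\alpha}$, replace second moments by fractional $(1+\gamma)$-moments with $\gamma<\beta$, and exploit the polynomial tail of $L_\alpha$ to handle far ancestors) correctly identifies the shape of the modifications, but the mechanism you describe is not the one that makes the argument close, and one of your ingredient swaps does not plug in where you think it does. The paper never invokes Slack/Yaglom asymptotics $\P(Z_n>0)\sim c\,n^{-1/\beta}$ nor any bound of the form $\E[Z_n^{1+\gamma}]\leq Cn^{\gamma/\beta}$, and your ``main integrability check'' $\sum_n n^{-d/\alpha}\,n^{1/\beta-1}$ does not correspond to any sum that actually appears. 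The fractional moment enters only one level deep along the spine: in the proof of the generalized Key Lemma one conditions on $\mathcal{G}$ (the $\sigma$-field generated by the spine, the numbers of children along the spine, and the positions of the brothers) and bounds
\begin{align*}
\mathbf{Q}^*\Big(S_{w_n}\in A-x,\ \sum_{k>K}Y_k\geq 1\Big)
&\leq \E_{\mathbf{Q}^*}\Big[\mathbf{1}\{S_{w_n}\in A-x\}\sum_{k>K}\E_{\mathbf{Q}^*}\big[Y_k^{\gamma}\,\big|\,\mathcal{G}\big]\Big],
\end{align*}
using first the subadditivity $(\sum\lambda_i)^\gamma\leq\sum\lambda_i^\gamma$ for $\gamma\in(0,1)$ and then Jensen's inequality inside the sum over brothers; this reduces everything to the single finite constant $\hat c_\gamma=\int x^\gamma\,d\nu(x)=\int x^{1+\gamma}\,d\mu(x)<\infty$, and the resulting generational tail is $\sum_{k>K}k^{-d\gamma/\alpha}$, which converges iff $d\gamma/\alpha>1$, i.e.\ iff one can choose $\gamma<\beta$ with $\gamma>\alpha/d$, i.e.\ iff $d>\alpha/\beta$. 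That is where the critical dimension comes from; without this conditioning-on-$\mathcal{G}$/subadditivity/Jensen chain, the naive Markov bound on $\E[Y_k\,|\,\mathcal{S}]$ used under $\mathcal{H}_1,\mathcal{H}_2$ simply diverges (because $\E[|B(w_{k+1})|]=\infty$), and it is not obvious how a global bound on $\E[Z_n^{1+\gamma}]$ would feed into the spine expansion of $\mathbf{P}(Z_n(A-x)\geq 1)$.

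The second gap concerns the characterization. You propose to ``write a cluster-invariant $\Theta$ as a mixture of Poisson processes via a backward martingale argument on the ball-counts'' and to read off $X$ from ``the factorial-moment structure.'' That is not the route taken, and it is not clear it survives without second moments (the factorial moments of $\Theta_n(A)$ need not be finite under $\mathcal{H}_3$). The proof of Theorem \ref{charac} is purely a Laplace-functional argument: cluster-invariance gives $\E[e^{-\int f\,d\Theta}]=\E[e^{-\int f\,d\Theta_n}]$, far particles $\|x\|>Mn^{1/\alpha}$ are shown negligible (Lemma \ref{modifimajoc1}, which rests on Lemma \ref{modifirecu} and the estimate $L_\alpha(x)\sim c_{\alpha,d}\|x\|^{-d-\alpha}$ — the one part of your sketch that exactly matches the paper), the near-field product is linearized via the Key Lemma to $\exp(R_{f,A}W_n+o)$ with $W_n=\int \mathbf{P}(Z_n(A-x)\geq1)\Theta(dx)$, and then tightness of $(W_n)$ (Lemma \ref{tightness}, which uses no moment of $\mu$ at all) plus Prokhorov produces the mixing variable $X_A$. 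This part transfers verbatim to $\mathcal{H}_3$ precisely because it never touches $\E[Z_n^2]$; the only place a moment assumption is needed is inside the Key Lemma, and there it is the fractional moment of $\nu$, not of $Z_n$, that is used.
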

\subsection{Organisation of the paper}
\begin{itemize}
\item In section \ref{sec_une_particule}, we begin by recalling some results about local limit theorems, heat kernel estimates and point processes. In subsection \ref{measure}, we quickly explain the classical "spine method" which is a fundamental tool in this article. Finally, in subsection \ref{secfunlemma}, we study the branching process starting from a single particle conditionally on survival in a given set.
\item In section \ref{seccv}, we prove Theorem \ref{cv}.
\item Theorem \ref{cv} gives us the point process $\Lambda^{d,X}_{\infty}$ as a limiting object. We give an independent proof of the compatibility  $\Lambda^{d,X}_{\infty}$ in section \ref{sec4}.
\item In section \ref{sec5}, we prove Theorem \ref{charac}.
\item Then, we briefly explain how to prove analoguous versions of Theorems \ref{cv} and \ref{charac} under the hypothesis $\mathcal{H}_3$ in section \ref{gene}.
\item We finish in section \ref{sec7} by a short discussion about our results.
\end{itemize}
\section{Preliminaries} \label{sec_une_particule}
\subsection{Local-limit Theorem}
The motion law $\mathcal{P}$ is not always a Gaussian random variable. However all our computations are easier in this case. The local-limit Theorem is a fundamental tool in order to make a link between general random walks and Gaussian random walks. 
\begin{prop}[Local-limit Theorem, Theorem 2.1.1 of \cite{lawler} ]\label{loclimitlat}
Let us assume that $\mathcal{P}$ satisfies hypothesis $\mathcal{H}_2$. Let $(\hat{S}_n)_{n\in\N}$ be a random walk with motion law $\mathcal{P}$ starting from $0$. Then we have the following uniform convergence:
$$\underset{n\rightarrow +\infty}\lim \hspace{0.1 cm}\underset{x\in\mathbb{Z}^d }\sup\hspace{0.1 cm} n^{d/2}\Bigg|\P(\hat{S}_n=x)-\frac{1}{(2\pi)^{d/2}\det(\Sigma)^{1/2}}\times \exp\left( -\frac{1}{2n}\langle x,\Sigma^{-1}x\rangle\right)\Bigg|=0.$$
\end{prop}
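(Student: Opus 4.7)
The statement is the classical local-limit theorem for an aperiodic symmetric random walk on $\mathbb{Z}^d$ with finite range (hence all moments). My plan is the standard Fourier inversion argument.

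First, I would write the probability as a Fourier integral. Let $\phi(\xi):=\sum_{x\in\mathcal{R}} p(x)e^{i\langle x,\xi\rangle}$ be the characteristic function of $\mathcal{P}$. By Fourier inversion on the torus $\mathbb{T}^d=[-\pi,\pi]^d$,
\[
\P(\hat{S}_n=x)=\frac{1}{(2\pi)^d}\int_{\mathbb{T}^d} e^{-i\langle x,\xi\rangle}\phi(\xi)^n\,d\xi .
\]
Similarly, the Gaussian density on the right-hand side admits the representation
\[
\frac{1}{(2\pi)^{d/2}\det(\Sigma)^{1/2}}e^{-\langle x,\Sigma^{-1}x\rangle/(2n)}=\frac{1}{(2\pi)^d}\int_{\R^d} e^{-i\langle x,\xi\rangle}e^{-n\langle \xi,\Sigma\xi\rangle/2}\,d\xi .
\]
After the substitution $\xi=\eta/\sqrt{n}$ in both integrals, showing the desired uniform estimate reduces to controlling
\[
\sup_{x\in\mathbb{Z}^d}\Bigl|\int_{\sqrt{n}\,\mathbb{T}^d} e^{-i\langle x/\sqrt{n},\eta\rangle}\phi(\eta/\sqrt{n})^n\,d\eta-\int_{\R^d} e^{-i\langle x/\sqrt{n},\eta\rangle}e^{-\langle\eta,\Sigma\eta\rangle/2}\,d\eta\Bigr|\xrightarrow[n\to+\infty]{}0,
\]
the $n^{d/2}$ prefactor in the statement being exactly the Jacobian produced by the change of variables.

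Next I would split the first integral into three pieces and estimate them separately. Because $\mathcal{P}$ has finite range and is symmetric, the Taylor expansion gives $\phi(\xi)=1-\tfrac12\langle\xi,\Sigma\xi\rangle+O(|\xi|^3)$ near $0$, so on the region $|\eta|\leq n^{1/6}$ one has uniformly $\phi(\eta/\sqrt n)^n=e^{-\langle\eta,\Sigma\eta\rangle/2}(1+o(1))$, with an integrable Gaussian majorant. The tail $|\eta|>n^{1/6}$ of the Gaussian integral is negligible. The remaining piece $n^{1/6}\leq|\eta|\leq \sqrt{n}\pi$ is handled with the aperiodicity assumption: since $|\phi(\xi)|<1$ for every $\xi\in\mathbb{T}^d\setminus\{0\}$ and $\phi$ is continuous with $1-|\phi(\xi)|\geq c|\xi|^2$ near the origin, one obtains $|\phi(\eta/\sqrt n)|^n\leq e^{-c|\eta|^2}$ uniformly, giving a bound that vanishes when integrated over $|\eta|\geq n^{1/6}$. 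Since every bound above is uniform in $x\in\mathbb{Z}^d$ (the oscillatory factor $e^{-i\langle x/\sqrt n,\eta\rangle}$ has modulus $1$), the supremum in $x$ tends to zero.

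The main obstacle is the control on the intermediate region: one needs a quantitative strict inequality $|\phi(\xi)|\leq 1-c|\xi|^2$ on all of $\mathbb{T}^d$, which is a global statement requiring both the local quadratic behavior from the positive definite covariance and the uniform bound $|\phi|<1$ away from $0$ coming from aperiodicity; combining these via a compactness argument on $\mathbb{T}^d$ is the key technical step. Everything else is a careful but routine dominated convergence argument, so the hard content is exactly the global Gaussian-type upper bound on $|\phi|$ that allows the three pieces to be glued together with error terms uniform in $x$.
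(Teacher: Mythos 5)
The paper does not prove this proposition: it cites Theorem~2.1.1 of Lawler--Limic and leaves the proof to that reference. Your Fourier-inversion argument (write $\P(\hat S_n=x)$ as an integral of $\phi^n$ over the torus, rescale by $\sqrt n$, split into the central region handled by a quadratic Taylor expansion, the Gaussian tail, and the intermediate region controlled by the global bound $|\phi(\xi)|\le 1-c|\xi|^2$ coming from aperiodicity plus positive definiteness of $\Sigma$ via compactness of $\mathbb{T}^d$) is exactly the standard proof given in that reference, and the argument is sound; you correctly isolate the uniform-in-$x$ feature (the oscillating factor has modulus one) and the key technical point (the global exponential majorant for $|\phi|$). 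One small slip worth fixing: your displayed Fourier representation of the Gaussian should carry a $(2\pi n)^{d/2}$ rather than $(2\pi)^{d/2}$ in the denominator on the left (equivalently, an extra $n^{-d/2}$ factor), matching the Jacobian you later account for; the subsequent reduction to the supremum of the difference of the two rescaled integrals is stated correctly, so the argument is unaffected.
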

Remark that hypothesis $\mathcal{H}_2$ is really strong. Actually we only need the finiteness of the second moment of $\mathcal{P}$ for the local-limit Theorem to be true. (See Theorem 2.3.9 in \cite{lawler}.) Moreover, this theorem implies the following useful corollary.
\begin{cor}\label{inegastand}
Let $(\hat{S}_n)_{n\in\N}$ be a random  walk whose motion law satisfies hypotheses $\mathcal{H}_1$ or $\mathcal{H}_2$.
Let $A$ be a closed ball. Then there exists a constant $c_d$, depending only on the dimension $d$ and the motion law $\mathcal{P}$, such that for every $n\in\N^*$,
$$\P(\hat{S}_n\in A)\leq c_d \frac{|A|}{n^{d/2} }$$
where $|A|$ is the  Lebesgue measure of $A$.
\end{cor}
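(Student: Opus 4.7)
The plan is to split into the two cases $\mathcal{H}_1$ and $\mathcal{H}_2$ and exploit the sharpest available one-step density estimate in each.

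Under $\mathcal{H}_1$ the walk $\hat{S}_n$ is the sum of $n$ independent standard Gaussians on $\R^d$, hence it admits the density
$$p_n(x) = (2\pi n)^{-d/2}\exp\left(-\frac{|x|^2}{2n}\right) \leq (2\pi n)^{-d/2}.$$
Integrating over the ball $A$ gives
$$\P(\hat{S}_n \in A) = \int_A p_n(x)\,dx \leq (2\pi)^{-d/2}\,\frac{|A|}{n^{d/2}},$$
which is the required inequality with $c_d = (2\pi)^{-d/2}$.

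Under $\mathcal{H}_2$ I would invoke Proposition \ref{loclimitlat}. The local-limit theorem furnishes an integer $N_0$ such that, for every $n \geq N_0$ and every $x \in \mathbb{Z}^d$,
$$\P(\hat{S}_n = x) \leq \frac{2}{(2\pi)^{d/2}\det(\Sigma)^{1/2}}\,n^{-d/2},$$
the factor $2$ absorbing the uniform $o(n^{-d/2})$ error. For $n < N_0$ one uses the crude bound $\P(\hat{S}_n=x) \leq 1 \leq N_0^{d/2}\,n^{-d/2}$. Summing the resulting uniform estimate over $x \in A \cap \mathbb{Z}^d$ yields, for some constant $C = C(d,\mathcal{P})$,
$$\P(\hat{S}_n \in A) \leq C\,\frac{|A \cap \mathbb{Z}^d|}{n^{d/2}}.$$

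It then remains to compare $|A \cap \mathbb{Z}^d|$ with the Lebesgue volume $|A|$. For a euclidean ball $A$ of radius $r$, associating to each lattice point in $A$ the unit cube centered on it yields a disjoint collection of cubes all contained in the $(\sqrt{d}/2)$-thickening of $A$, whence $|A \cap \mathbb{Z}^d| \leq (r + \tfrac{\sqrt{d}}{2})^d\, v_d / 1$ with $v_d$ the volume of the unit ball. Combined with the convention in the paper that $A$ contains at least one lattice point (so $r$ is bounded below by a dimensional constant), this gives $|A \cap \mathbb{Z}^d| \leq c'_d\, |A|$ and completes the proof. The only mildly delicate step is this last geometric comparison for balls of moderate size; the local-limit theorem itself does all of the probabilistic work, so I do not anticipate any real obstacle.
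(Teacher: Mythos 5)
Your overall approach is exactly the one the paper intends (the paper gives no explicit proof of this corollary, just says it follows from the local-limit theorem), and both cases are handled by the right tool: direct density bound under $\mathcal{H}_1$, uniform local-limit bound plus a lattice-to-volume comparison under $\mathcal{H}_2$. The $\mathcal{H}_1$ case is correct as written.

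The one genuine flaw is in your final geometric step under $\mathcal{H}_2$. You write that the paper's convention that $A$ contains a lattice point forces the radius $r$ to be bounded below by a dimensional constant, but this inference is false: a ball $B(z,\varepsilon)$ centered at a lattice point $z$ contains that lattice point for every $\varepsilon>0$. Without a lower bound on $r$, the comparison $|A\cap\mathbb{Z}^d|\leq c'_d|A|$ fails, since your own estimate gives $|A\cap\mathbb{Z}^d|/|A|\leq\bigl(1+\tfrac{\sqrt d}{2r}\bigr)^d$, which blows up as $r\to 0$. In fact, for $A=B(z,\varepsilon)$ with $z\in\mathbb{Z}^d$ and $\varepsilon$ small, the left-hand side of the corollary is $\P(\hat{S}_n=z)\asymp n^{-d/2}$ while the right-hand side is $c_d\, v_d\,\varepsilon^d\, n^{-d/2}$, so no $A$-independent constant $c_d$ can work. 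This appears to be an imprecision in the paper's statement as well; the corollary is only ever applied there to a fixed test ball, so a constant depending on $A$ (equivalently, a standing assumption that the radius of $A$ is bounded below, say $r\geq \sqrt d$) is all that is really needed. Your proof becomes correct if you make that assumption explicit instead of trying to deduce it from the lattice-point convention.
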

\subsection{Heat kernel estimate}
In addition to the local-limit Theorem, we also need the following so-called heat kernel estimate:
\begin{prop}[Heat kernel estimate] \label{heat}
Let us assume that $\mathcal{P}$ satisfies hypothesis $\mathcal{H}_2$. Let $(\hat{S}_n)_{n\in\N}$ be a random walk with motion law $\mathcal{P}$ starting from $0$. Then there exists a positive constant $C_1$ such that for every $x\in\mathbb{Z}^d$ and for every $n\in\N^*$:
$$\P(\hat{S}_n=x)\leq C_1n^{-d/2}\exp\left(- \frac{||x||^2}{C_1 n} \right). $$
Moreover there exist positive constants $\tau$ and $C_2$ such that for every $n\in\N^*$ and for every $x\in\mathbb{Z}^d$ satisfying $||x||\leq \tau n$:
$$\P(\hat{S}_n=x)\geq C_2n^{-d/2}\exp\left(- \frac{||x||^2}{C_2 n} \right). $$
\end{prop}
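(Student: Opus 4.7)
The plan is to combine the local limit theorem (Proposition \ref{loclimitlat}) with an exponential (Cram\'er) tilting argument. The decisive feature of hypothesis $\mathcal{H}_2$ is that $\mathcal{P}$ has bounded support, so the log-Laplace transform $\Lambda(\theta):=\log\E[e^{\langle\theta,\hat{S}_1\rangle}]$ is finite, smooth, and strictly convex on all of $\R^d$; the symmetry of $\mathcal{P}$ gives $\Lambda(0)=0$, $\nabla\Lambda(0)=0$, and $\nabla^2\Lambda(0)=\Sigma$.

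Fix $\tau>0$ small enough that $\nabla\Lambda$ is a diffeomorphism from a neighbourhood of $0$ onto the ball $\{y\in\R^d:\|y\|\leq\tau\}$. For $\|x\|\leq\tau n$, let $\theta_x$ be the unique solution of $\nabla\Lambda(\theta_x)=x/n$. The tilted law $\P^{\theta_x}$, under which each step has mass proportional to $e^{\langle\theta_x,y\rangle}p(y)$, is a finite-range, aperiodic random walk on $\mathbb{Z}^d$ with drift $x/n$ and covariance $\nabla^2\Lambda(\theta_x)$ uniformly bounded between two positive definite matrices as $\theta_x$ varies in a compact neighbourhood of $0$. The standard change-of-measure identity reads
$$\P(\hat{S}_n=x)=e^{-n\Lambda^{*}(x/n)}\,\P^{\theta_x}(\hat{S}_n=x),$$
where $\Lambda^{*}$ is the Legendre dual of $\Lambda$. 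Applying Proposition \ref{loclimitlat} to the centred walk $\hat{S}^{\theta_x}-n\nabla\Lambda(\theta_x)$ gives $\P^{\theta_x}(\hat{S}_n=x)\asymp n^{-d/2}$, with constants uniform in $\|x\|\leq\tau n$. Combined with the quadratic sandwich $c_1\|x\|^2/n^2\leq \Lambda^{*}(x/n)\leq c_2\|x\|^2/n^2$, valid in a neighbourhood of $0$ since $\Lambda^{*}$ is smooth and strictly convex with $\Lambda^{*}(0)=0$, $\nabla\Lambda^{*}(0)=0$, and $\nabla^2\Lambda^{*}(0)=\Sigma^{-1}\succ 0$, this yields both inequalities of the proposition in the entire range $\|x\|\leq\tau n$, which is exactly the regime claimed for the lower bound.

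To extend the upper bound beyond this regime, observe that $\P(\hat{S}_n=x)=0$ as soon as $\|x\|>R n$, where $R=\max_{y\in\mathcal{R}}\|y\|$. For $\tau n<\|x\|\leq R n$, a routine exponential Chebyshev estimate yields $\P(\hat{S}_n=x)\leq e^{-n\delta}$ for some $\delta=\delta(\tau)>0$; since the target bound $C n^{-d/2}e^{-\|x\|^2/(C_1 n)}$ is at least $C n^{-d/2}e^{-R^2 n/C_1}$ in this regime, it suffices to enlarge $C_1$ so that $R^2/C_1<\delta$.

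The only real subtlety is to verify that the $o(n^{-d/2})$ error term in Proposition \ref{loclimitlat} is uniform as $\theta_x$ ranges over a compact set of tilt parameters. This uniformity can be extracted from the Fourier-analytic proof of the LLT, using that both aperiodicity and the strict inequality $|\phi^{\theta}(\xi)|<1$ for $\xi\in[-\pi,\pi]^d\setminus\{0\}$ persist under exponential tilting with bounded parameter, and that the quadratic vanishing of $1-\phi^{\theta}(\xi)$ near $\xi=0$ has constants varying continuously with $\theta$.
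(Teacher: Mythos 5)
Your proof is correct in substance but takes a genuinely different route from the paper's. The paper derives both bounds as a black-box consequence of the theory of random walks on weighted graphs: it identifies the walk with a random walk with conductances on a finite-range graph over $\mathbb{Z}^d$, verifies the volume-doubling condition $(VD)$ and the weak Poincar\'e inequality $(WPI(2))$ (the latter via a result of Saloff-Coste), and then invokes Theorem 3.3.5 and Proposition 3.3.2 of Kumagai's lecture notes to obtain two-sided Gaussian heat-kernel estimates. Your argument instead stays entirely within the local-CLT/large-deviations toolbox: exponential tilting reduces the problem to a local limit theorem for the tilted, centered walks, uniformly over a compact set of tilt parameters, and the Legendre transform $\Lambda^*$ supplies the Gaussian factor via its quadratic behaviour near the origin; the upper bound outside the linear range $\|x\|\leq\tau n$ is then handled separately by Cram\'er's bound and the finite range. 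Each approach has its merits: yours is self-contained, makes the Gaussian constants visible through $\nabla^2\Lambda^*(0)=\Sigma^{-1}$, and exploits precisely the finite support of $\mathcal{P}$; the paper's is shorter to state and leverages existing infrastructure that is robust to more general graph geometries, at the cost of importing the full analytic machinery behind Poincar\'e inequalities and volume doubling. The one point you flag as "the only real subtlety" --- uniformity of the LLT error in the tilt parameter --- is indeed where the technical work is hidden, and your sketch (continuity of $\phi^\theta$, persistence of aperiodicity and the quadratic vanishing of $1-\phi^\theta$ near $0$) correctly identifies what is needed, though a fully rigorous version would require spelling out the compactness argument over $\{(\theta,\xi):\theta\in K,\ \epsilon\leq\|\xi\|\leq\pi\}$. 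Both proofs share a minor but common caveat for the lower bound at very small $n$: one needs $\P(\hat{S}_n=0)>0$ for the finitely many $n<1/\tau$, which is guaranteed only after choosing $\tau$ appropriately (or by noting that aperiodicity gives positivity from some $n_0$ on and shrinking $\tau$ accordingly).
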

\begin{proof}
A random walk $(\hat{S}_n)_{n\in\N}$ with motion law $\mathcal{P}$ under the hypothesis $\mathcal{H}_2$ can be interpreted as a random walk with conductances $(c(x,y))_{x,y\in G}$ where $G$ is a graph whose vertices are given by $\mathbb{Z}^d$ and $(x,y)$ is an edge of $G$ iff $x-y\in \mathcal{R}$. We recall that $\mathcal{R}$ is the finite support of $\mathcal{P}$. For every edge $(x,y)$ in the graph $G$, $c(x,y)=p(y-x)$. For every $R\geq 1$, we define $\mathcal{B}(R)=\{x\in G, d(0,x)\leq R\}$ and $V(R)=|\mathcal{B}(R)|$ where $d$ is the graph distance in $G$.
According to Theorem 3.3.5 and Proposition 3.3.2 of \cite{Kumagai}, two conditions have to be checked in order to satisfy heat kernel estimate: 
\begin{itemize}
\item Condition $(VD)$: There exists a positive constant $C$ such that for every $R\geq 1$,
$$V(R)\leq C V(2R). $$
\item Condition $(WPI(2))$: There exists a positive constant $C'$ such that for every $R\geq 1$ and for every $f:\mathcal{B}(R)\mapsto \R$,
$$\sum\limits_{x\in \mathcal{B}(R)} (f(x)-\bar{f}_R)^2\leq C'R^2\sum\limits_{x,y\in B(2R)}c(x,y)(f(x)-f(y))^2 $$
where $\bar{f}_R=\frac{1}{V(R)}\sum\limits_{y\in \mathcal{B}(R)} f(y)$.
\end{itemize}
Verifying condition $(VD)$ is straightforward in our case. Moreover Condition $(WPI(2))$ is a consequence of Theorem 4.1 in \cite{saloff-coste_1995} applied with $p=2$.
\end{proof}
\subsection{A warm-up about point processes} \label{warmup}
For reader's convenience, we recall a few facts about point processes. One can refer to \cite{Kallenberg_modern} and \cite{MKM} for more details.
A point measure $m$ is an integer-valued measure on $\R^d$ such that $m$ is finite on all compact sets of $\R^d$. We denote by $\textbf{N}$, the set of point measures. We can equip $\textbf{N}$ with a $\sigma$-field $\mathfrak{N}$ which is the smallest $\sigma$-field that makes measurable the applications $m\mapsto \int f dm$ for every $f$ in $\mathcal{F}_c(\R^d)$, the set of compactly supported continuous functions on $\R^d$.
A point process $\Theta$ is then defined as a random variable from a probability space $(\Omega,\mathcal{F},\P)$ into $(\textbf{N},\mathfrak{N})$. It is a random collection of positions which is locally finite. Sometimes, we use the following abuse of notation: we write $x\in \Theta$ as $x$ runs through the support of $\Theta$ with multiplicity. For example, if $\Theta(\{x\})=3$, $x$ is counted three times.
A natural question is to know how to characterize the distribution of a point process. Let us state the following useful criterion.
\begin{lem}[Corollary 2.3 in \cite{Kallenberg_modern}]\label{egaloi}
Let $\xi_1$ and $\xi_2$ be two point processes. Then the following are equivalent:
\begin{enumerate}
\item $\xi_1$and $\xi_2$ have the same distribution.
\item For every $f\in \mathcal{F}_c^+(\R^d)$, the set of non-negative compactly supported continuous functions, $$\E\left[\exp\left(-\int f(x) d\xi_1(x)\right)\right]=\E\left[\exp\left(-\int f(x) d\xi_2(x)\right)\right].$$
\end{enumerate}
\end{lem}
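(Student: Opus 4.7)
The direction $(1)\Rightarrow(2)$ is immediate: if $\xi_1$ and $\xi_2$ induce the same law on $(\textbf{N},\mathfrak{N})$, then for any $f\in \mathcal{F}_c^+(\R^d)$ the random variables $\int f\,d\xi_1$ and $\int f\,d\xi_2$ have the same distribution (since $m\mapsto\int f\,dm$ is $\mathfrak{N}$-measurable by definition), and in particular their expectations against $e^{-\,\cdot\,}$ agree. So all the work is in $(2)\Rightarrow(1)$.

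My plan for $(2)\Rightarrow(1)$ is to promote the equality of one-dimensional Laplace functionals to equality of all finite-dimensional Laplace transforms, and then to push this through a monotone class argument. Concretely, fix $f_1,\dots,f_n\in\mathcal{F}_c^+(\R^d)$ and $t_1,\dots,t_n\geq 0$. The function $f:=\sum_{i=1}^n t_i f_i$ lies again in $\mathcal{F}_c^+(\R^d)$, so assumption (2) applied to $f$ gives
$$\E\!\left[\exp\!\left(-\sum_{i=1}^n t_i \int f_i\,d\xi_1\right)\right]=\E\!\left[\exp\!\left(-\sum_{i=1}^n t_i \int f_i\,d\xi_2\right)\right].$$
Since the vectors $\bigl(\int f_1\,d\xi_j,\dots,\int f_n\,d\xi_j\bigr)$ for $j=1,2$ are $\R_+^n$-valued, and a probability measure on $\R_+^n$ is uniquely determined by its Laplace transform on $\R_+^n$, these two random vectors have the same joint distribution. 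Thus $\xi_1$ and $\xi_2$ have the same finite-dimensional distributions with respect to the family of evaluation maps $\{m\mapsto\int f\,dm:f\in\mathcal{F}_c^+(\R^d)\}$.

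To conclude, I would invoke the fact that the cylinder sets
$$\Bigl\{m\in\textbf{N}\;:\;\bigl(\textstyle\int f_1\,dm,\dots,\int f_n\,dm\bigr)\in B\Bigr\},\qquad n\geq 1,\ f_i\in\mathcal{F}_c^+(\R^d),\ B\in\mathcal{B}(\R_+^n),$$
form a $\pi$-system generating $\mathfrak{N}$ (indeed, they generate the same $\sigma$-field as $\mathcal{F}_c(\R^d)$ does, since any $f\in\mathcal{F}_c$ splits as $f=f^+-f^-$ with $f^\pm\in\mathcal{F}_c^+$). By Dynkin's $\pi$-$\lambda$ theorem, the laws of $\xi_1$ and $\xi_2$ on $(\textbf{N},\mathfrak{N})$, which agree on this $\pi$-system by the previous paragraph, agree on all of $\mathfrak{N}$.

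The only place where one has to be mildly careful is the passage from finite-dimensional Laplace transforms to finite-dimensional distributions: one needs the uniqueness of a measure on $\R_+^n$ from its Laplace transform on $\R_+^n$, which follows from the Stone--Weierstrass theorem applied to the algebra generated by $\{x\mapsto e^{-t\cdot x}:t\in\R_+^n\}$ on the one-point compactification of $\R_+^n$, or equivalently from standard characteristic-function arguments after analytic continuation. This is the main technical point; once it is granted, the rest of the argument is just the generating-$\pi$-system observation above.
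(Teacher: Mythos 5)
The paper supplies no proof of this lemma; it is cited verbatim as Corollary~2.3 of Kallenberg's monograph on random measures. Your blind proof is correct and reproduces the standard argument that one would find there: reduce the equality of Laplace functionals to equality of all finite-dimensional Laplace transforms by substituting $f=\sum_i t_i f_i$, deduce equality of finite-dimensional distributions via uniqueness of the multivariate Laplace transform on $\R_+^n$, and conclude by Dynkin's $\pi$-$\lambda$ theorem, observing that the cylinder sets built from $\mathcal{F}_c^+(\R^d)$-evaluations generate $\mathfrak{N}$ (since any $f\in\mathcal{F}_c$ splits as $f^+-f^-$ with $f^\pm\in\mathcal{F}_c^+$).

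One small inaccuracy in the ``main technical point'' you flag: the Stone--Weierstrass justification as stated does not quite work, because the algebra generated by $\{x\mapsto e^{-\langle t,x\rangle}:t\in\R_+^n\}$ does not extend continuously to the one-point compactification of $\R_+^n$; for instance $e^{-x_1}$ on $\R_+^2$ has no limit as $x_2\to\infty$ with $x_1$ fixed. The usual fix is to push the two candidate laws forward by the coordinate-wise homeomorphism $\phi(x)=(e^{-x_1},\dots,e^{-x_n})$ from $\R_+^n$ onto $(0,1]^n\subset[0,1]^n$; the Laplace-transform hypothesis then says the image measures have matching mixed moments on the compact cube $[0,1]^n$, and Weierstrass approximation gives uniqueness. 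Your alternative route via analytic continuation to characteristic functions is also correct and sidesteps the issue entirely. With either fix, the rest of your argument stands as written.
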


Further, a sequence of point processes $(\xi_n)_{n\in\N}$ is said to converge in law in the vague topology toward a point process $\xi$ if and only if $\int f d\xi_n\xrightarrow[n\rightarrow +\infty]{law}\int f d\xi$ for every $f\in \mathcal{F}_c(\R^d)$. One can also say that $\xi_n$ converges vaguely toward $\xi$ .
Actually, the following criterion characterizes convergence in law in the vague topology.
\begin{lem}[Theorem 4.11 in \cite{Kallenberg_modern} ]
A sequence of point processes $(\xi_n)_{n\in\N}$ converges in law toward a point process $\xi$ in the vague topology if and only if $$\displaystyle\E\left[\exp\left(-\int f(x) d\xi_n(x)\right) \right]\xrightarrow[n\rightarrow +\infty]{}\E\left[\exp\left(-\int f(x) d\xi(x)\right) \right]$$ for every $f\in \mathcal{F}_c^+(\R^d)$.
\end{lem}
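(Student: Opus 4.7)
The plan is to argue the two implications separately, with the ``if'' direction requiring the substantial work via the continuity theorem for Laplace transforms.

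The ``only if'' direction is immediate: for every $f\in\mathcal{F}_c^+(\R^d)$, the map $m\mapsto \int f\,dm$ from $(\textbf{N},\mathfrak{N})$ to $\R_+$ is continuous for the vague topology, so by definition of vague convergence $\int f\,d\xi_n$ converges in law to $\int f\,d\xi$ as real-valued random variables; since $x\mapsto e^{-x}$ is bounded and continuous on $\R_+$, the Portmanteau theorem yields the convergence of Laplace functionals.

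For the converse, I would first upgrade the single-test-function Laplace convergence into a joint one. Given $f_1,\dots,f_k\in\mathcal{F}_c^+(\R^d)$ and scalars $t_1,\dots,t_k\geq 0$, the linear combination $\sum_i t_i f_i$ again belongs to $\mathcal{F}_c^+(\R^d)$, so the hypothesis gives
$$\E\!\left[\exp\!\left(-\sum_{i=1}^k t_i\int f_i\,d\xi_n\right)\right]\xrightarrow[n\to+\infty]{}\E\!\left[\exp\!\left(-\sum_{i=1}^k t_i\int f_i\,d\xi\right)\right].$$
By the continuity theorem for Laplace transforms of $\R_+^k$-valued random vectors, this pointwise convergence implies joint convergence in law of the vectors $\bigl(\int f_i\,d\xi_n\bigr)_{1\leq i\leq k}$ toward $\bigl(\int f_i\,d\xi\bigr)_{1\leq i\leq k}$. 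To lift this finite-dimensional convergence to vague convergence of the point processes themselves, I would exploit the Polish structure of $(\textbf{N},\mathfrak{N})$: first establish tightness of $(\xi_n)_n$ by choosing, for each compact ball $B$, a function $f_B\in\mathcal{F}_c^+$ with $f_B\geq\mathbf{1}_B$ and using the Laplace convergence applied to small multiples $\varepsilon f_B$ to control $\sup_n\P(\xi_n(B)>R)$ uniformly in $n$; then any vague subsequential limit $\xi_\infty$ automatically satisfies $\E[\exp(-\int f\,d\xi_\infty)]=\E[\exp(-\int f\,d\xi)]$ for every $f\in\mathcal{F}_c^+$, so Lemma \ref{egaloi} forces $\xi_\infty\overset{law}{=}\xi$; uniqueness of the subsequential limit combined with tightness then yields the full vague convergence $\xi_n\to\xi$.

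The main obstacle is the tightness step: it requires a quantitative use of the Laplace hypothesis along non-negative continuous test functions approximating indicators of compact sets, combined with the characterization of relatively compact families of laws on $(\textbf{N},\mathfrak{N})$ (ultimately Prokhorov's theorem in the Polish space of locally finite point measures equipped with the vague topology). The continuity theorem for multidimensional Laplace transforms is by contrast entirely classical, and the identification of subsequential limits through Lemma \ref{egaloi} is then a one-line invocation.
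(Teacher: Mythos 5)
The paper does not prove this lemma at all; it is invoked as a black box from Kallenberg's book (Theorem 4.11 in \cite{Kallenberg_modern}), so there is no ``paper proof'' to compare against. That said, your sketch is essentially sound and tracks the standard textbook argument, and the ``only if'' direction, the joint Laplace continuity step, the tightness estimate via $\varepsilon f_B$, and the identification of subsequential limits through Lemma \ref{egaloi} are all correctly reasoned.

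One remark worth making: given the definition of vague convergence in law that this paper actually adopts (see the sentence just above the lemma: $\xi_n\to\xi$ vaguely in law iff $\int f\,d\xi_n\to\int f\,d\xi$ in law for every $f\in\mathcal{F}_c(\R^d)$), the tightness/Prokhorov layer of your argument is superfluous. Once you have joint convergence in law of $\bigl(\int f_i\,d\xi_n\bigr)_i$ for finite families of non-negative test functions, you can take an arbitrary $f\in\mathcal{F}_c(\R^d)$, split $f=f^+-f^-$ with $f^\pm\in\mathcal{F}_c^+(\R^d)$, observe that $t_1 f^++t_2 f^-\in\mathcal{F}_c^+$ for $t_1,t_2\ge0$ so the bivariate Laplace transforms of $\bigl(\int f^+\,d\xi_n,\int f^-\,d\xi_n\bigr)$ converge on $\R_+^2$, hence the pair converges jointly in law, and then apply the continuous map $(x,y)\mapsto x-y$ to conclude $\int f\,d\xi_n\to\int f\,d\xi$ in law. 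That finishes the converse with no appeal to Polish-space weak convergence. Your longer route is what one needs if the target were the stronger statement that $\xi_n$ converges weakly as random elements of $\textbf{N}$ with the vague topology (i.e.\ without being told in advance that $\xi$ is a point process), which is the form Kallenberg actually proves; but for the statement as written in this paper, the shorter derivation suffices.
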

\noindent We also need a criterion for the existence of a limit point process:
\begin{lem} \label{existence}
Let $(\xi_n)_{n\in\N}$ be a sequence of point processes. Let us assume that for every $f\in\mathcal{F}_c(\R^d)$ and for every $\eta\in\R$, $$\E\left[\exp\left( i\eta \int f(x) d\xi_n(x)\right) \right]\xrightarrow[n\rightarrow +\infty]{}\Phi_{f}(\eta)$$ and that $\Phi_f$ is a continuous function, then $(\xi_n)_{n\in\N}$ converges in law in the vague topology toward some point process $\xi$.
\end{lem}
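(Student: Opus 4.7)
The plan is to combine L\'evy's continuity theorem with a vague-tightness argument for random measures and to conclude via Lemma \ref{egaloi}.

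First, for each fixed $f\in\mathcal{F}_c(\R^d)$, I would read $\eta\mapsto \E[\exp(i\eta\int f\,d\xi_n)]$ as the characteristic function of the real random variable $\int f\,d\xi_n$. By hypothesis these converge pointwise to $\Phi_f$, which is continuous, hence continuous at $0$. L\'evy's continuity theorem then provides a real random variable $Y_f$ with characteristic function $\Phi_f$ such that $\int f\,d\xi_n \xrightarrow[n\rightarrow+\infty]{law} Y_f$.

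Next, I would establish vague tightness of $(\xi_n)_{n\in\N}$. For each compact $K\subset \R^d$, pick $f\in\mathcal{F}_c^+(\R^d)$ with $f\geq \mathbf{1}_K$. Then $\xi_n(K)\leq \int f\,d\xi_n$, and convergence in law of the right-hand side makes $(\int f\,d\xi_n)_n$ tight in $\R_+$, hence so is $(\xi_n(K))_n$. By the standard tightness criterion for random measures in the vague topology (Chapter 4 of \cite{Kallenberg_modern}), $(\xi_n)_{n\in\N}$ is itself vaguely tight. Prohorov's theorem then yields that every subsequence admits a further subsequence $(\xi_{n_k})_k$ converging vaguely in law to some locally finite point process $\xi^\star$, the integer-valuedness being preserved in vague limits.

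To conclude, I would identify the law of any such $\xi^\star$. For every $f\in\mathcal{F}_c^+(\R^d)$, the map $m\mapsto \exp(-\int f\,dm)$ is bounded and continuous in the vague topology, and $x\mapsto e^{-x}$ is bounded continuous on $\R_+$; hence
\[
\E\left[\exp\left(-\int f\,d\xi^\star\right)\right] \;=\; \lim_{k\to\infty}\E\left[\exp\left(-\int f\,d\xi_{n_k}\right)\right] \;=\; \E[\exp(-Y_f)],
\]
where the last equality uses the first step. The Laplace functional of $\xi^\star$ is thus determined by $(\Phi_f)_{f\in\mathcal{F}_c^+(\R^d)}$ alone, so by Lemma \ref{egaloi} all subsequential limits share the same distribution. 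A tight sequence in a Polish space whose subsequential limits coincide in law converges in law, which yields the desired $\xi$. The only mildly delicate point is the upgrade from tightness of the scalar families $(\xi_n(K))_n$ to vague tightness of the $\xi_n$'s, but this is a classical fact from the theory of random measures.
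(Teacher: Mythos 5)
Your proof is correct, and it follows essentially the same route as the paper: both use the (strong) L\'evy continuity theorem to obtain, for each $f\in\mathcal{F}_c(\R^d)$, convergence in law of $\int f\,d\xi_n$, and then pass from this family of one-dimensional limits to vague convergence in law of the point processes themselves. The only difference is presentational: the paper invokes Corollary~4.14 of \cite{Kallenberg_modern} as a black box for this second step, whereas you unfold that black box explicitly via the standard tightness--Prohorov--identification-of-limits argument, together with Lemma~\ref{egaloi} to identify subsequential limits. All the individual steps you sketch (Urysohn majorant for $\xi_n(K)$, closedness of integer-valued measures under vague limits, continuity of $m\mapsto\exp(-\int f\,dm)$) are legitimate, so this is a self-contained and correct replacement for the citation rather than a genuinely different proof.
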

\begin{proof}
We combine Corollary 4.14 in \cite{Kallenberg_modern} and strong Lévy's continuity Theorem. (See \cite{Fristedt_Gray}.)
\end{proof}

\subsection{Spine method and change of measure} \label{measure}
A key ingredient in our proof is the following spine method which was developped largely in the study of branching processes. (See \cite{LPP} or section 2 in \cite{Hushi}.) Let us begin with some useful notation. First, we introduce $\mathcal{U}:=\{\emptyset\}\cup\bigcup_{k=1}^{+\infty}\left(\N^*\right)^k$ which is called Neveu's space. Every $u=i_1\cdots i_r$ represents the labelling of a particle at generation $r$. Indeed $u$ is the $i_r$-th offspring of the $i_{r-1}$-th offspring of $\cdots$ of the $i_1$-th offspring of the root $\emptyset$. The length $r$ of $u$ is denoted by $|u|$. The parent of $u$ is denoted by $\cev{u}$. If $u,v\in\mathcal{U}$, we denote by $uv$ the concatenation of $u$ and $v$. Moreover, we introduce the partial order $\leq$ on $\mathcal{U}$ by $u\leq v$ if $u$ is an ancestor of $v$. Then let us define $\mathcal{V}:=\{(u,s_u):u\in \mathcal{U}, s_u\in\R \}$. If $\mathcal{T}$ is a subset of $\mathcal{V}$, let us define $\mathcal{T}_{gen}:=\{u\in\mathcal{U}:\exists s_u\in\R, (u,s_u)\in\mathcal{T}\}$. Finally we define the set of marked trees $$\mathcal{E}=\{\mathcal{T}\subset \mathcal{V}:\emptyset\in\mathcal{T}_{gen}, \text{ the connected component of }\emptyset\text{ in }\mathcal{T}_{gen} \text{ is a tree}\}. $$
For every $n\in\N$, we define the $\sigma$-field $\mathcal{F}_n$ on $\mathcal{E}$ by $\mathcal{F}_n:=\sigma\left(\{(u, s_u):|u|\leq n\}\right)$ and we denote by $\mathcal{F}_{\infty}$ the $\sigma$-field $\sigma\left(\bigcup_{n\in\N} \mathcal{F}_n \right)$. Let $\mathbf{T}: \mathcal{E}\mapsto \mathcal{E}$ be the identity map. If we have a probability measure $\mathbf{M}$ on $\mathcal{E}$, $\mathbf{T}$ can be seen as a random object of $\mathcal{E}$ with distribution $\mathbf{M}$. We use the notation $\E_{\mathbf{M}}$ to mean that we integrate functionnals of $\mathbf{T}$ with respect to $\mathbf{M}$. For every $\mathcal{T}:=\{(u,s_u):u\in \mathcal{U}, s_u\in\R \}\in\mathcal{E}$ with underlying tree $\mathcal{T}_{gen}$, we define a new element $\mathcal{T}^u$ of $\mathcal{E}$ starting from $u$ and its underlying tree as
$$\mathcal{T}^u_{gen}:=\{v\in\mathcal{U}: uv\in \mathcal{T}_{gen}\}$$
and
$$\mathcal{T}^u=\{(v, s_{uv}-s_u): v\in \mathcal{T}^u_{gen}\}.$$ Moreover, for every $u\in\mathcal{U}$, we define the measurable function $\mathbf{T}^u$ from $\mathcal{E}$ into $\mathcal{E}$ by $\mathbf{T}^u(\mathcal{T})=\mathcal{T}^u$ for every $\mathcal{T}\in\mathcal{E}$. All random variables $S_u$, $Z_n$, $Z_n(A)$ which are introduced in subsection \ref{debut} can be seen as measurable functions from $\mathcal{E}$ into $\R$.
Let us introduce the measure $\mathbf{P}$ on $\mathcal{E}$ which is the probability distribution of the critical branching process starting from an ancestor located at $0$ defined in subsection \ref{debut}. Then, we introduce a new measure $\mathbf{Q}$ thanks to the following change of measure:
$$\mathbf{Q}|_{\mathcal{F}_n}:=Z_n\cdot \mathbf{P}|_{\mathcal{F}_n}$$
where $Z_n$ is the number of particles at generation $n$. One can remark that $(Z_n)_{n\in\N}$ is a martingale with respect to $\left(\mathcal{F}_n\right)_{n\in\N}$ under $\mathbf{P}$.

Now, let us introduce a "size-biased" branching process. We define a new size-biased law $\nu$ on $\N$ by $\nu(k)=k\mu(k)$ for every $k\in\N$. This is a probability measure because $\mu$ has mean 1. Motions still have distribution $\mathcal{P}$. Then, we proceed recursively to construct the size-biased version of our branching process:

We start with one particle called $w_0$ located at $0$ in $\R^d$. It defines the generation 0 of the process. Now let $\mathcal{Z}_n$ be the set of particles in generation $n$. We also have a marked particle $w_n$ among $\mathcal{Z}_n$. The particle $w_n$ gives birth to $ \hat{N}_{w_n}$ children with $ \hat{N}_{w_n}\sim \nu$. The children of $w_n$ jump independently from the position of $w_n$ according to the motion law $\mathcal{P}$. Among the children of $w_n$, we choose uniformly at random a special particle called $w_{n+1}$. Moreover, every particle $u\in\mathcal{Z}_n\backslash\{w_n\}$ gives birth independently to $N_u$ offsprings where $N_u$ has law $\mu$ as in the classical branching process. The offsprings of $u$ jump from the position of $u$ according to the motion law $\mathcal{P}$. The offsprings of $\mathcal{Z}_n$ with their new positions (including the marked particle $w_{n+1}$) form the $(n+1)$-th generation of the branching process.

With this construction we get a distinguished ray of particles $(w_n)_{n\in\N}$ called the spine. Let us define the set $\mathcal{E}^*$ which consists in elements $\mathcal{T}$ of $\mathcal{E}$ with an infinite distinguished ray. The size-biased critical branching process defined above gives us a probability distribution $\mathbf{Q}^*$ on $\mathcal{E}^*$.
Then we have the following proposition establishing a link between $\mathbf{Q}$ and $\mathbf{Q}^*$. 
\begin{prop}[\cite{LPP}] \label{change}
The marginal of $\mathbf{Q}^*$ with respect to $\mathcal{E}$ (that is, we forget the distinguished ray) is $\mathbf{Q}$. Moreover, for every $n\in\N^*$ and for every particle $u$ at generation $n$,
$$\mathbf{Q}^*\left(w_n=u|\mathcal{F}_n \right)= \frac{1}{Z_n}. $$
\end{prop}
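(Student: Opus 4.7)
The plan is to prove both assertions simultaneously by induction on $n$. For the base case $n=0$ the root $\emptyset$ is the only particle, so $Z_0=1$ and $w_0=\emptyset$ deterministically; both statements are trivial and $\mathbf{Q}^*|_{\mathcal{F}_0}=\mathbf{P}|_{\mathcal{F}_0}=\mathbf{Q}|_{\mathcal{F}_0}$.

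For the inductive step, I would pass from generation $n$ to generation $n+1$ by a one-step Radon--Nikodym computation following the recursive construction of $\mathbf{Q}^*$. Conditionally on $(\mathcal{F}_n,w_n)$, the only differences between the laws of $\mathbf{T}_{n+1}$ under $\mathbf{Q}^*$ and under $\mathbf{P}$ are that the spine particle $w_n$ produces $\hat N_{w_n}\sim\nu$ children rather than $N_{w_n}\sim\mu$ children, and that a new spine particle $w_{n+1}$ is drawn uniformly among the children of $w_n$. The density change from $\mu$ to $\nu$ at the spine contributes a factor $\nu(k)/\mu(k)=k$, while the uniform pick of $w_{n+1}$ among the $k$ children contributes a factor $1/k$; these cancel exactly. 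Hence for any bounded $\mathcal{F}_{n+1}$-measurable $F$ and any label $u\in\mathcal{U}$ with $|u|=n+1$,
\begin{equation*}
\E_{\mathbf{Q}^*}\!\left[F\cdot\mathbf{1}_{\{w_{n+1}=u\}}\,\Big|\,\mathcal{F}_n,w_n\right]
=\mathbf{1}_{\{w_n=\cev u\}}\cdot\E_{\mathbf{P}}\!\left[F\cdot\mathbf{1}_{\{u\in\mathcal{Z}_{n+1}\}}\,\Big|\,\mathcal{F}_n\right].
\end{equation*}

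Taking $\mathbf{Q}^*$-expectation and invoking the induction hypothesis in both of its forms, namely $\mathbf{Q}^*|_{\mathcal{F}_n}=Z_n\cdot\mathbf{P}|_{\mathcal{F}_n}$ together with $\mathbf{Q}^*(w_n=\cev u\mid\mathcal{F}_n)=\mathbf{1}_{\{\cev u\in\mathcal{Z}_n\}}/Z_n$, the factor $Z_n$ cancels against $1/Z_n$ and I obtain
\begin{equation*}
\E_{\mathbf{Q}^*}\!\left[F\cdot\mathbf{1}_{\{w_{n+1}=u\}}\right]=\E_{\mathbf{P}}\!\left[F\cdot\mathbf{1}_{\{u\in\mathcal{Z}_{n+1}\}}\right].
\end{equation*}
Summing over labels $u$ with $|u|=n+1$ yields the marginal identity $\E_{\mathbf{Q}^*}[F]=\E_{\mathbf{P}}[Z_{n+1}F]=\E_{\mathbf{Q}}[F]$; dividing the two displays gives the conditional identity $\mathbf{Q}^*(w_{n+1}=u\mid\mathcal{F}_{n+1})=1/Z_{n+1}$ on $\{u\in\mathcal{Z}_{n+1}\}$, which closes the induction.

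The argument is essentially bookkeeping and there is no genuine analytic obstacle; the single thing that requires care is to keep the marginal and the conditional induction hypotheses coupled, since the computation at step $n+1$ uses both simultaneously and it is precisely this coupling that produces the cancellation of the two $Z_n$ factors. One can equivalently package the induction invariant as the single identity $\mathbf{Q}^*\bigl(A\cap\{w_n=u\}\bigr)=\mathbf{P}\bigl(A\cap\{u\in\mathcal{Z}_n\}\bigr)$ for every $A\in\mathcal{F}_n$ and every label $u$ with $|u|=n$; this form makes both statements of Proposition \ref{change} manifest at once and is perhaps the cleanest way to organize the induction.
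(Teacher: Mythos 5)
Your proof is correct, and it follows the standard size-biasing argument from the cited reference \cite{LPP}: the paper itself gives no proof here, only the citation, and the canonical proof is exactly the one-step induction you describe, where the Radon--Nikodym factor $\nu(k)/\mu(k)=k$ at the spine cancels against the uniform choice factor $1/k$. The coupled induction invariant $\mathbf{Q}^*\bigl(A\cap\{w_n=u\}\bigr)=\mathbf{P}\bigl(A\cap\{u\in\mathcal{Z}_n\}\bigr)$ for $A\in\mathcal{F}_n$ that you identify at the end is indeed the cleanest packaging and matches how the result is usually organized.
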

We must insist on the fact that, under $\mathbf{Q}^*$, the spine $(S_{w_n})_{n\in\N}$ is a random walk starting from $0$ with motion law $\mathcal{P}$. We denote by $\mathcal{S}$ the $\sigma$-field generated by the spine and $\mathcal{G}$ the $\sigma$-field generated by the spine and the number of children of particles in the spine and the positions of the brothers of the spine. Let us denote by $\mathfrak{B}$ the set of brothers of particles in the spine. It is clear from our construction that we get the following proposition.
\begin{prop} \label{condifreres}
Under $\mathbf{Q}^*$, conditionally on $\mathcal{G}$ (or $\mathcal{S}$), $\left( \mathbf{T}^u\right)_{u\in\mathfrak{B}}$ are independent and $\mathbf{T}^u$ has distribution $\mathbf{P}$ for every $u\in\mathfrak{B}$.
\end{prop}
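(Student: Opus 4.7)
The plan is to verify the proposition by directly unwinding the inductive construction of $\mathbf{Q}^*$ and tracking which random choices are encoded in $\mathcal{G}$ versus which are the subtree evolutions. The conditioning $\sigma$-field $\mathcal{G}$ records the trajectory $(S_{w_n})_{n\in\N}$, the litter sizes $(\hat{N}_{w_n})_{n\in\N}$ along the spine, and the displacements (hence the positions) of all siblings of the spine. What is \emph{not} in $\mathcal{G}$ is the evolution of the branching process rooted at any sibling of the spine, i.e.\ the family $(\mathbf{T}^u)_{u\in \mathfrak{B}}$. So the claim really says that once we fix the spine data, the residual randomness splits into independent copies of $\mathbf{P}$.

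The key steps I would carry out are the following. First, fix $n\in\N$ and list the brothers of the spine up to generation $n$, say $\mathfrak{B}_n=\{u\in \mathfrak{B}:|u|\leq n\}$. By the very definition of the size-biased construction, at each generation $k<n$: (i) the marked particle $w_k$ produces $\hat{N}_{w_k}\sim \nu$ children with i.i.d.\ displacements $\sim \mathcal{P}$, (ii) $w_{k+1}$ is chosen uniformly among them, and (iii) each non-marked particle at generation $k$ reproduces according to $\mu$ with i.i.d.\ displacements $\sim \mathcal{P}$, independently of everything else. Items (i)--(ii), together with the iteration rule, are exactly what $\mathcal{G}$ records; item (iii), restricted to the brothers $u\in \mathfrak{B}_n$ at the instant of their creation, is what drives $(\mathbf{T}^u)_{u\in \mathfrak{B}_n}$. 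Since the only dependence of these brother-subtrees on $\mathcal{G}$ is through the starting position of each brother (which gets subtracted off in the definition $\mathcal{T}^u=\{(v,s_{uv}-s_u)\}$), and since the rules of the construction never couple different non-spine particles, the subtrees $(\mathbf{T}^u)_{u\in \mathfrak{B}_n}$ are conditionally independent given $\mathcal{G}$, each of law $\mathbf{P}$.

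Formally I would proceed by induction on $n$, using the branching/Markov property at generation $n$: conditionally on $\mathcal{F}_n$, the future is built from an independent size-biased tree rooted at $w_n$ (contributing the spine and its brothers after time $n$) together with independent copies of $\mathbf{P}$ rooted at each non-spine particle of generation $n$. The induction hypothesis handles $\mathfrak{B}_n$; the independent branching step adds the brothers at generation $n+1$ as an independent family of $\mathbf{P}$-subtrees, still independent of everything that came before when one further conditions on the new data in $\mathcal{G}$. Passing to the inductive limit (using that the statement only concerns finitely many brothers in any bounded generation, and that $\mathfrak{B}=\bigcup_n \mathfrak{B}_n$) yields the conditional independence of the whole family $(\mathbf{T}^u)_{u\in \mathfrak{B}}$ given $\mathcal{G}$, and hence also given the sub-$\sigma$-field $\mathcal{S}$, since $\mathcal{S}\subset \mathcal{G}$ and integrating out the $\mathcal{G}$-measurable non-spine data preserves the i.i.d.\ $\mathbf{P}$-structure.

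The only real subtlety, and the main thing I would be careful about, is bookkeeping rather than probability: one must distinguish the random variables used to build $\mathbf{T}^u$ (offspring counts and displacements of the descendants of $u$) from those that are measurable w.r.t.\ $\mathcal{G}$ (the displacement that places $u$ itself, plus everything about the spine). Once the algebra of $\sigma$-fields is cleanly written down, the proposition is just the statement that the construction recipe of $\mathbf{Q}^*$ uses an independent copy of $\mathbf{P}$ at each non-spine particle, so no deeper fact (e.g.\ from Proposition~\ref{change}) is actually needed here.
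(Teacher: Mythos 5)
Your argument is correct and is essentially the (only) approach: the paper gives no proof and simply remarks that the proposition is ``clear from our construction,'' and your write-up is precisely the careful unwinding of that construction — tracking which randomness $\mathcal{G}$ records versus which randomness builds the brother-subtrees, and noting that the recipe for $\mathbf{Q}^*$ attaches an independent $\mathbf{P}$-tree at every non-spine particle.
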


\subsection{A critical branching random walk conditioned on survival in a given set} \label{secfunlemma}

We are now going to prove a key lemma for this article.
Let us recall some notation. In every generation $n$, $\{S_u, |u|=n\}$ is the collection of positions of all particles in generation $n$. $Z_n$ is the total number of particles in generation $n$ and for any closed ball $A$, we  denote by $Z_n(A)$ the number of particles lying in $A$ in generation $n$. Moreover the set of continuous functions with compact support is denoted by $\mathcal{F}_c(\R^d)$.
\begin{lem}[Key Lemma]\label{funlemma}
Let $A$ be a closed ball. We assume $\mathcal{H}_1$ or $\mathcal{H}_2$. Let $f\in\mathcal{F}_c(\R^d)$ whose values are in $\R_-$ or $i\R$ and such that $supp(f)\subset A$. Let $M>0$. Then, there exists a constant $I_{A,f}$ such that, uniformly in $x\in B(0,M\sqrt{n})$, as $n$ goes to infinity,
$$\begin{array}{ll}\displaystyle\E_{\mathbf{P}}\left[\textbf{1}\{Z_n(A-x)\geq 1\}\exp\left(\sum\limits_{|u|=n} f(S_u+x) \right) \right] \vspace{0.5 cm}\\
&\hspace{-7 cm}=(1+o_n(1))\times (2\pi n )^{-d/2}\det(\Sigma)^{-1/2}\exp\left( -\frac{1}{2n}\langle x,\Sigma^{-1}x\rangle\right)I_{A,f} .
\end{array}$$
\end{lem}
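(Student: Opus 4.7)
The plan is to combine the spine change of measure (Proposition~\ref{change}) with the local-limit theorem to extract the Gaussian heat kernel, and then identify $I_{A,f}$ as the limit of the remaining conditional expectation.

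\emph{Step 1 (Spine representation).} The first step is to establish the identity
$$\E_{\mathbf{P}}\bigl[\mathbf{1}\{Z_n(A-x) \geq 1\}\, e^{\sum_{|u|=n} f(S_u+x)}\bigr] = \E_{\mathbf{Q}^*}\!\left[\frac{\mathbf{1}\{S_{w_n} \in A-x\}}{Z_n(A-x)}\, e^{\sum_{|u|=n} f(S_u+x)}\right],$$
obtained from $\mathbf{P}|_{\mathcal{F}_n} = Z_n^{-1}\mathbf{Q}|_{\mathcal{F}_n}$, Proposition~\ref{change}, the identity $\mathbf{Q}^*(w_n = u \mid \mathcal{F}_n) = 1/Z_n$, and the tautology $\frac{1}{Z_n} = \sum_{|u|=n}\frac{\mathbf{1}\{S_u \in A-x\}}{Z_n\,Z_n(A-x)}$ valid on $\{Z_n(A-x)\geq 1\}$. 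This rewriting exposes the spine position $S_{w_n}$, which carries the Gaussian density.

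\emph{Step 2 (Heat-kernel factor).} Under $\mathbf{Q}^*$ the spine $(S_{w_k})_{k\geq 0}$ is a random walk with step law $\mathcal{P}$ starting at $0$. Conditioning on $S_{w_n}$ and using that $A$ is a fixed bounded ball, so $\|y+x\| = O(1)$ for every $y \in A-x$, one has $\langle y,\Sigma^{-1}y\rangle = \langle x,\Sigma^{-1}x\rangle + O(\|x\|)$, which contributes a factor $1+o(1)$ to the Gaussian exponential uniformly for $x \in B(0,M\sqrt n)$. Proposition~\ref{loclimitlat} (or the explicit Gaussian density in $\mathcal{H}_1$) then yields, uniformly in such $y,x$,
$$\mathbf{Q}^*(S_{w_n} = y) = (1+o_n(1))\, \frac{1}{(2\pi n)^{d/2}\det(\Sigma)^{1/2}}\, \exp\!\left(-\frac{1}{2n}\langle x,\Sigma^{-1}x\rangle\right),$$
so the task reduces to showing that $T_n(y,x) := \E_{\mathbf{Q}^*}[e^{\sum f(S_u+x)}/Z_n(A-x) \mid S_{w_n} = y] \to T_\infty(y+x)$ uniformly for $y \in A-x$ and $x \in B(0,M\sqrt n)$, after which one sets $I_{A,f} := \sum_{z \in A\cap\mathbb{Z}^d} T_\infty(z)$ (or $\int_A T_\infty(z)\,dz$ in $\mathcal{H}_1$).

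\emph{Step 3 (Convergence of $T_n$; main obstacle).} By Proposition~\ref{condifreres}, under $\mathbf{Q}^*$ conditionally on the spine the brothers' subtrees are independent $\mathbf{P}$-processes, and one decomposes
$$Z_n(A-x) = 1 + \sum_{v \in \mathfrak{B}} Z_{n-|v|}^{(v)}(A-x-S_v).$$
Splitting brothers at level $|v| = n-T$: the early brothers ($|v| \leq n-T$) have total contribution negligible in expectation, thanks to Proposition~\ref{heat} (bounding the probability that a subtree rooted far from $A-x$ reaches $A-x$) combined with the $\sim 1/(n-|v|)$ survival probability of critical trees, uniformly in $x \in B(0,M\sqrt n)$; the late brothers ($|v| > n-T$) have subtrees of depth $\leq T$ rooted within $O(\sqrt T)$ of $y$, forming a local cluster around $y$. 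Reversing the last $T$ spine steps from $y$ (using the symmetry of $\mathcal{P}$) makes the joint law of this cluster asymptotically independent of $n$ and depending on $x$ only through $y+x \in A$. Letting first $n\to\infty$ and then $T\to\infty$ identifies $T_\infty$ as a Laplace-type functional of the resulting limit cluster --- which is precisely the point process $\mathcal{N}_A$ of Proposition~\ref{limitpp}. The main obstacle is exactly this step: controlling the early-brother tail uniformly in $x \in B(0,M\sqrt n)$ (where the uniformity of Proposition~\ref{heat} is crucial) and rigorously identifying the limiting cluster distribution.
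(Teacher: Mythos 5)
Your plan matches the paper's proof essentially step for step: the same spine change of measure exposing $S_{w_n}$, the local-limit theorem to extract the Gaussian density, truncation of early brother-subtrees using the summability of $k^{-d/2}$ for $d\ge3$, and time-reversal of the last $K$ spine steps to identify the limit cluster (whose Laplace functional is $G_{A,f}$, and $I_{A,f}=\sum_{y\in A\cap\mathbb{Z}^d}G_{A,f}(y)$ resp.\ $\int_A G_{A,f}$). Two small implementation differences worth noting: the paper does not condition on $S_{w_n}=y$ (which would turn the spine into a bridge and require a bridge-to-free-walk argument near the endpoint) but instead truncates first, so that after time-reversal the long walk $S_{w_n}-S_{w_K}$ is independent under $\mathbf{Q}^*$ of the retained brother-subtrees and of $(S_{w_k})_{k\le K}$, allowing Proposition~\ref{loclimitlat} to apply directly; and the negligibility of early brothers is obtained from Markov's inequality with only the first-moment bound $\E_{\mathbf{Q}^*}[Y_k(\cdot)\,|\,\mathcal{S}]\le\sigma^2|A|c_d k^{-d/2}$, not the $\sim1/k$ Kolmogorov survival estimate you invoke (multiplying those two bounds is not licit as the events are not independent, and the first-moment bound alone suffices exactly when $d\ge3$).
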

\begin{proof}
We do the proof only under the hypothesis $\mathcal{H}_2$. Assuming the hypothesis $\mathcal{H}_1$, the proof follows exactly the same lines but with a few notation changes.
We start by using the spine decomposition and the change of measure described in subsection \ref{measure}. This yields that for every $n\in\N^*$ and for every $x\in B(0,M\sqrt{n})$,
\begin{align}
\hspace{1 cm}\E_{\mathbf{P}}\left[\textbf{1}\{Z_n(A-x)\geq 1\}\exp\left(\sum\limits_{|u|=n} f(S_u+x) \right) \right]\nonumber\\
&\hspace{-7 cm}=\displaystyle\E_{\mathbf{P}}\left[\frac{\textbf{1}\{Z_n(A-x)\geq 1\}Z_n(A-x)}{Z_n(A-x)} \exp\left(\sum\limits_{|u|=n} f(S_u+x) \right)\right] \nonumber \\ 
&\hspace{-7 cm}=\displaystyle\E_{\mathbf{Q}}\left[\frac{Z_n(A-x)}{Z_n}\frac{\textbf{1}\{Z_n(A-x)\geq 1\}}{Z_n(A-x)}\exp\left(\sum\limits_{|u|=n} f(S_u+x) \right)\right] \nonumber\\ 
&\hspace{-7 cm}=\displaystyle\E_{\mathbf{Q}^*}\left[\frac{\textbf{1}\{S_{w_n}\in A-x \}}{Z_n(A-x)} \exp\left(\sum\limits_{|u|=n} f(S_u+x) \right)\right].\label{ega0}
\end{align}

In the second equality, we used the definition of $\mathbf{Q}$ and in the last one we used Proposition \ref{change}. Now, let us introduce some notation. For every $k\in\N$, let $B(w_{k+1})$ be the set of brothers of $w_{k+1}$, that is, the children of $w_k$ which are not $w_{k+1}$. $|B(w_{k+1})|$ stand for the number of individuals in $B(w_{k+1})$. If $\mathcal{Z}$ is a set of particles and $g$ a function, let us define:
$$\langle \mathcal{Z}, g \rangle := \sum\limits_{u\in \mathcal{Z}} g(S_u). $$
Further, for every particle $u$, we denote by $\mathcal{Z}_k^u$ the $k$-th generation of the branching process $\mathbf{T}^u$ starting from $u$. Moreover, the positions in $\mathbf{T}^u$ are shifted by $S_u$, that is, $S^u_v=S_v-S_u$ for every $v\in \mathcal{Z}_k^u$. For every set $\tilde{A}$, for every $k\geq 0$ and for every particle $u$, we define $Z^u_k(\tilde{A})$ by $$Z^u_k(\tilde{A})=\sum\limits_{v\in \mathcal{Z}^u_k}\delta_{S_v^u\in \tilde{A} }.$$
Further, for every particle $u$, we denote by $\rho_u$ the random variable $S_u-S_{\cev{u}}$. Then, by spinal decomposition we know that,
$$\begin{array}{ll}
\sum\limits_{|u|=n} f(S_u+x)= f(S_{w_n}+x)+\sum\limits_{k=0}^{n-1}\sum\limits_{u\in B(w_{k+1})} \langle \mathcal{Z}^u_{n-k-1},f(S_{w_k}+\rho_u+x+\cdot)\rangle
\end{array}. $$
In the same way,
$$ \begin{array}{ll}
Z_n(A-x)=\textbf{1}\{S_{w_n}\in A-x\}+\sum\limits_{k=0}^{n-1}\sum\limits_{u\in B(w_{k+1})} Z^u_{n-k-1}(A-x-S_{w_k}-\rho_u).
\end{array}$$
For sake of clarity, let us introduce for every $k\in\{0,\cdots,n-1\}$ and for every $z\in\R$:
$$\begin{array}{ll}
Y_{f,n-k}(z)&:=\sum\limits_{u\in B(w_{k+1})} \langle \mathcal{Z}^u_{n-k-1},f(z+\rho_u+\cdot)\rangle \\
Y_{n-k}(z)&:=\sum\limits_{u\in B(w_{k+1})} Z^u_{n-k-1}(A-z-\rho_u).
\end{array}$$
Moreover, by convention we define for every $z\in\R$:
$$Y_{f,0}(z):= f(z). $$
With this notation, (\ref{ega0}) becomes\\
\begin{align}
&\hspace{-0.5 cm}\E_{\mathbf{P}}\left[\textbf{1}\{Z_n( A-x)\geq 1\}\exp\left(\sum\limits_{|u|=n} f(S_u+x) \right) \right]\nonumber \\
&=\E_{\mathbf{Q}^*}\left[\frac{\textbf{1}\{S_{w_n}\in A-x\}}{1+\sum\limits_{k=0}^{n-1}Y_{n-k}(S_{w_k}+x)}\exp\left(\sum\limits_{k=0}^{n}Y_{f,n-k}(S_{w_k} +x)\right) \right] \nonumber\\
&=\E_{\mathbf{Q}^*}\left[\frac{\textbf{1}\{S_{w_n}\in A-x\}}{1+\sum\limits_{k=1}^{n}Y_{k}(S_{w_{n-k}} +x)}\exp\left(\sum\limits_{k=0}^{n}Y_{f,k}(S_{w_{n-k}}+x)\right) \right]. \label{ega1}
\end{align}
Let $K>0$. We write $\E_{\mathbf{Q}^*}\left[\cdot| \mathcal{S} \right]$ when we mean that we condition on the spine. By conditional Markov inequality we deduce that\\
\begin{align}
\mathbf{Q}^*\left(S_{w_n}\in A-x,\sum\limits_{k=K+1}^{n}Y_{k}(S_{w_{n-k}} +x)\geq 1\right)& \nonumber\\
&\hspace{-7 cm}\leq\E_{\mathbf{Q}^*}\left[\textbf{1}\{S_{w_n}\in A-x\}\E_{\mathbf{Q}^*}\left[\sum\limits_{k=K+1}^{n}Y_{k}(S_{w_{n-k}}+x) \Bigg| \mathcal{S}\right] \right].\label{ega1bis}
\end{align}
However by definition of $\mathbf{Q}^*$ and $Y_k(\cdot)$ and by Proposition \ref{condifreres}, for any $k\in\{K+1,\cdots, n\}$,
\begin{align}
\E_{\mathbf{Q}^*}\left[Y_{k}(S_{w_{n-k}}+x) \Bigg| \mathcal{S}\right]
&=\E_{\mathbf{Q}^*}\left[|B(w_ {n-k+1})|\Bigg| \mathcal{S}\right]\times \E_{\mathbf{P}} \left[Z_{k-1}(A-x-y-\rho) \right]|_{y=S_{w_{n-k}}}\nonumber\\
&=\sigma^2\times \E_{\mathbf{P}} \left[Z_{k-1}(A-x-y-\rho) \right]|_{y=S_{w_{n-k}}}\nonumber\\
&=\sigma^2\times \P\left(\hat{S}_{k-1}\in A-x-y-\rho\right)|_{y=S_{w_{n-k}}} \label{ega1tri}
\end{align}
where $(\hat{S}_k)_{k\in\N}$ is a random walk whose i.i.d increments have distribution $\mathcal{P}$ and $\rho$ is a random variable with distribution $\mathcal{P}$ which is independent of $Z_{k-1}(\cdot)$ under $\mathbf{P}$ and independent of $\hat{S}$ under $\P$. In the second equality we used the fact that $1+|B(w_{n-k+1})|$ has distribution $\nu$ under $\mathbf{Q}^*$ and $1+\sigma^2$ is the mean of $\nu$. To obtain the third equality, we used the fact that the branching process is critical under $\mathbf{P}$. Consequently, thanks to Corollary \ref{inegastand} and identity (\ref{ega1tri}), we obtain for any $k\in\{K+1,\cdots, n\}$,

\begin{align}
\E_{\mathbf{Q}^*}\left[Y_{k}(S_{w_{n-k}}+x) \Bigg| \mathcal{S}\right]&\leq \sigma^2|A|c_dk^{-d/2}.\label{ega1qua}
\end{align}
Therefore, combining identities (\ref{ega1qua}) and (\ref{ega1bis}) and Corollary \ref{inegastand} again, there exists a constant $C>0$ such that
\begin{align}
\mathbf{Q}^*\left(S_{w_n}\in A-x,\sum\limits_{k=K+1}^{n}Y_{k}(S_{w_{n-k}} +x)\geq 1\right)\vspace{0.2 cm}
&\leq \sigma^2|A|^2c_d^2\times n^{-d/2} \sum\limits_{k=K+1}^{+\infty}k^{-d/2}\nonumber\\
&\leq C\times n^{-d/2} K^{1-d/2}.\nonumber
\end{align} 
 We remark that $K^{1-d/2}=o_K(1)$  if and only if $d\geq3$. Thus, together with (\ref{ega1}), it yields\\
\begin{align}
&\hspace{-0.5 cm}\E_{\mathbf{P}}\left[\textbf{1}\{Z_n(A-x)\geq 1\}\exp\left(\sum\limits_{|u|=n} f(S_u+x) \right) \right]\nonumber \\
&=\E_{\mathbf{Q}^*}\left[\frac{\textbf{1}\{S_{w_n}\in(A-x)\}}{1+\sum\limits_{k=1}^{K}Y_{k}(S_{w_{n-k}} +x)}\exp\left(\sum\limits_{k=0}^{K}Y_{f,k}(S_{w_{n-k}}+x)\right) \right]+o_K(1)n^{-d/2}. \label{ega2}
\end{align}
Now let us introduce some new notation. For every $k\in\N^*$ and for every $z\in\R$,
$$\begin{array}{ll}
\tilde{Y}_{f,k}(z)&:=\sum\limits_{u\in B(w_{k+1})} \langle \mathcal{Z}^u_{k-1},f(z+\rho_u+\cdot)\rangle \\
\tilde{Y}_k(z)&:=\sum\limits_{u\in B(w_{k+1})} Z^u_{k-1}(A-z-\rho_u).
\end{array}$$
Moreover, by convention, we define for every $z\in\R$:
$$\tilde{Y}_{f,0}(z):= f(z). $$
With this new notation, by reversing time in the spine, (\ref{ega2}) yields,

\begin{align}
\E_{\mathbf{P}}\left[\textbf{1}\{Z_n( A-x)\geq 1\}\exp\left(\sum\limits_{|u|=n} f(S_u+x) \right) \right]&\nonumber \\
&\hspace{-7,2 cm}=\E_{\mathbf{Q}^*}\left[\frac{\textbf{1}\{S_{w_n}-S_{w_K}\in(A-x-S_{w_K})\}}{1+\sum\limits_{k=1}^{K}\tilde{Y}_{k}((S_{w_{n}}-S_{w_K})+x+(S_{w_K}-S_{w_k}))}\exp\left(\sum\limits_{k=0}^{K}\tilde{Y}_{f,k}((S_{w_{n}}-S_{w_k})+x)\right) \right]\nonumber\\
&\hspace{-6.8 cm}+o_K(1)n^{-d/2}. \label{ega3}
\end{align}
For every $z\in\mathbb{Z}^d$, let us denote $p_{n-K}(z)=\mathbf{Q}^*(S_{w_n}-S_{w_K}=z)$.
Let us define also the function 
 $$\begin{array}{ccccl}
F_{K,A} & : & \R^{K+1} & \longrightarrow & \C \\
 & & (y,y_1,\cdots,y_K) & \mapsto & \E_{\mathbf{Q}^*}\left[\frac{1}{1+\sum\limits_{k=1}^{K}\tilde{Y}_{k}(y-y_k)} \exp\left(\sum\limits_{k=0}^{K}\tilde{Y}_{f,k}(y-y_k)\right)\right].\\
\end{array}$$
Therefore, by (\ref{ega3}) and the independence under $\mathbf{Q}^*$ between $S_{w_n}-S_{w_K}$, $(S_{w_k})_{1\leq k\leq K}$ and $\left(\tilde{Y}_{f,k}(\cdot)\right)_{1\leq k\leq K}$, it holds that\\
\begin{align}\label{ega4}
&\hspace{-0.5 cm}\E_{\mathbf{P}}\left[\textbf{1}\{Z_n( A-x)\geq 1\}\exp\left(\sum\limits_{|u|=n} f(S_u+x) \right) \right]\nonumber \\
&=\E_{\mathbf{Q}^*}\left[\sum\limits_{y\in A\cap\mathbb{Z}^d}p_{n-K}(y-x-S_{w_K})F_{K,A}\left(y,S_{w_k},1\leq k\leq K\right)\right]+o_K(1)n^{-d/2}.
\end{align}
Using Theorem \ref{loclimitlat}, it holds that, uniformly in $x\in B(0,M\sqrt{n})$,

\begin{align}\label{ega5}
\E_{\mathbf{P}}\left[\textbf{1}\{Z_n(A-x)\geq 1\}\exp\left(\sum\limits_{|u|=n} f(S_u+x) \right) \right]\nonumber 
\\ &\hspace{-7.2 cm}=\hspace{0.2 cm}(2\pi(n-K))^{-d/2}\det(\Sigma)^{-1/2}\nonumber  \\
&\hspace{-6.8 cm}\times\sum\limits_{y\in A\cap\mathbb{Z}^d }\E_{\mathbf{Q}^*}\left[ \exp\left( -\frac{1}{2(n-K)}\langle V_{x,y,K},\Sigma^{-1}V_{x,y,K}\rangle\right)F_{K,A}\left(y,S_{w_k},1\leq k\leq K\right)\right]\nonumber\\
&\hspace{-6.8cm}+o_K(1)n^{-d/2}+o_{n,K}(1)n^{-d/2}
\end{align}
where for every $K\in\N$, $o_{n,K}(1)$ tends to zero as $n$ goes to infinity and $V_{x,y,K}=y-x-S_{w_K}$.
However, we remark that,
\begin{align}
\Bigg|\frac{1}{2(n-K)}\langle V_{x,y,K},\Sigma^{-1}V_{x,y,K}\rangle-\frac{1}{2n}\langle x,\Sigma^{-1}x\rangle\Bigg|\nonumber\\
&\hspace{-7 cm}\leq\left( \frac{1}{2(n-K)}-\frac{1}{2n}\right)| \langle x, \Sigma^{-1} x\rangle|+ \frac{1}{2(n-K)}\Bigg|\langle V_{x,y,K},\Sigma^{-1}V_{x,y,K}\rangle-\langle x,\Sigma^{-1} x\rangle \Bigg|\nonumber\\
&\hspace{-7 cm}\leq \frac{||\Sigma^{-1}||}{2(n-K)}\left(\frac{K||x||^2}{n}+||y||^2+||S_{w_K}||^2+2||x||(||y||+||S_{w_K}||)+2||y||\cdot||S_{w_K}|| \right)\nonumber\\
&\hspace{-7 cm}\leq \frac{||\Sigma^{-1}||}{2(n-K)}\left(\frac{K||x||^2}{n}+2||y||^2+2||S_{w_K}||^2+2||x||(||y||+||S_{w_K}||) \right).\label{ega5bisbis}
\end{align}
Therefore, on the event $\{ ||S_{w_K}||\leq K\}$, we can use (\ref{ega5bisbis}) and the inequality $|1-e^{-t}|\leq 2|t|$ in a neighborhood of 0 to show that there is $N_K\in\N^*$ such that for every $n\geq N_K$  and for every $x\in B(0,M\sqrt{n})$,
\begin{align}
\Bigg|\exp\left( -\frac{1}{2(n-K)}\langle V_{x,y,K},\Sigma^{-1}V_{x,y,K}\rangle\right)-\exp\left( -\frac{1}{2n}\langle x,\Sigma^{-1}x)\rangle\right)\Bigg|& \nonumber\\
&\hspace{-10 cm}\leq \exp\left( -\frac{1}{2n}\langle x,\Sigma^{-1}x\rangle\right)\times \frac{2||\Sigma^{-1}||}{(n-K)}\left(\frac{K||x||^2}{n}+||y||^2+||S_{w_K}||^2+||x||(||y||+||S_{w_K}||) \right)\nonumber\\
&\hspace{-10 cm}\leq \frac{2||\Sigma^{-1}||}{(n-K)} \left(\frac{KM^2}{2}+C_A^2+K^2 +M\sqrt{n}(C_A+K)\right) \label{ega5tri}
\end{align}
where $C_A=\underset{y\in A}\max ||y||$.
Furthermore, by Proposition \ref{heat},
\begin{align}
\mathbf{Q}^*(||S_{w_K}||\geq K)=o_K(1). \label{ega5bis}
\end{align}
Finally, using inequalities (\ref{ega5tri}) and (\ref{ega5bis})  in identity (\ref{ega5}), we know that uniformly in $x\in B(0,M\sqrt{n})$,\\
\begin{align}
&\E_{\mathbf{P}}\left[\textbf{1}\{Z_n( A-x)\geq1\}\exp\left(\sum\limits_{|u|=n} f(S_u+x) \right) \right]\nonumber \\
&\hspace{0.5 cm}=(2\pi n)^{-d/2}\det(\Sigma)^{-1/2}\exp\left( -\frac{1}{2n}\langle x,\Sigma^{-1}x)\rangle\right) \sum\limits_{y\in A\cap\mathbb{Z}^d}\E_{\mathbf{Q}^*}\left[ F_{K,A}\left(y,S_{w_k},1\leq k\leq K\right)\right] \nonumber\\
& \hspace{0.9 cm}+o_K(1)n^{-d/2}+o_{n,K}(1)n^{-d/2}.\label{ega6}
\end{align}
Now, let us introduce the function $G_{A,f}$ defined by

$$\begin{array}{ccccl}
G_{A,f} & : & \R & \to & \C \\
 & & y & \mapsto &  \displaystyle\E_{\mathbf{Q}^*}\left[\frac{1}{1+\sum\limits_{k=1}^{+ \infty}\tilde{Y}_{k}(y-S_{w_k})} \exp\left(\sum\limits_{k=0}^{+\infty}\tilde{Y}_{f,k}(y-S_{w_k})\right)\right]\\
\end{array}.$$
We define $I_{A,f}:=\sum\limits_{y\in A\cap\mathbb{Z}^d} G_{A,f}(y)$. $G_{A,f}$ is well-defined because the infinite sums in its expression are actually finite sums almost surely. Indeed, we can prove exactly as in (\ref{ega1qua}) that for every $k\in\N^*$,
\begin{align}
\mathbf{Q}^*\left(\tilde{Y}_{k}(y-S_{w_k})\geq 1\right)&\leq \E_{\mathbf{Q}^*}\left[\tilde{Y}_{k}(y-S_{w_k}) \right]\leq \sigma^2|A|c_dk^{-d/2}.\nonumber
\end{align}
As $d\geq 3$, this is summable. Thus, by Borel-Cantelli Lemma, almost surely, there is only a finite number of integers $k\in\N^*$ such that $\tilde{Y}_{k}(y-S_{w_k})\geq 1$. As $supp(f)\subset A$, there is also only a finite number of integers $k\in\N^*$ such that $\tilde{Y}_{f,k}(y-S_{w_k})\geq 1$. Consequently, the random  variable inside the expectation in the definition of $G_{A,f}$ is well-defined. Moreover the modulus of this quantity is lower than 1. Therefore, $G_{A,f}$ is well-defined.

Besides, by the dominated convergence theorem, $$\displaystyle\sum\limits_{y\in A\cap\mathbb{Z}^d}\E_{\mathbf{Q}^*}\left[ F_{K,A}\left(y,S_{w_k},1\leq k\leq K\right)\right]=I_{A,f}+o_K(1).$$
Finally, using this in (\ref{ega6}), we proved that uniformly in $x\in B(0,M\sqrt{n})$,\\
\begin{align}
\E_{\mathbf{P}}\left[\textbf{1}\{Z_n( A-x)\geq 1\}\exp\left(\sum\limits_{|u|=n} f(S_u+x) \right) \right]&\nonumber \\
&\hspace{-7 cm}=(2\pi n)^{-d/2}\det(\Sigma)^{-1/2}\exp\left( -\frac{1}{2n}\langle x,\Sigma^{-1}x)\rangle\right) \times I_{A,f}\nonumber\\
&\hspace{-6.6 cm}+o_K(1)n^{-d/2}+o_{n,K}(1)n^{-d/2}.\label{ega7}
\end{align}
It concludes the proof.

\end{proof}
In the proof of Lemma \ref{funlemma}, the quantity $I_{A,f}$ naturally arises. It is a very important quantity in the sequel of this article. We often refer to its definition.
\begin{defi}\label{defiprat}
Let $A$ be a closed ball and let $f$ be a continuous function whose support is in $A$ and whose values are in $-\R$ or in $i\R$.
With the notation which is introduced in the proof of Lemma \ref{funlemma}, we recall that:
$$\begin{array}{ccccl}
G_{A,f} & : & \R & \to & \C \\
 & & y & \mapsto &  \displaystyle\E_{\mathbf{Q}^*}\left[\frac{1}{1+\sum\limits_{k=1}^{+ \infty}\tilde{Y}_{k}(y-S_{w_k})} \exp\left(\sum\limits_{k=0}^{+\infty}\tilde{Y}_{f,k}(y-S_{w_k})\right)\right].\\
\end{array}$$
\begin{itemize}
\item Under the hypothesis $\mathcal{H}_1$, we define $I_{A,f}:=\int_A G_{A,f}(y)dy$.
\item Under the hypothesis $\mathcal{H}_2$, we define $I_{A,f}:=\sum\limits_{y\in A\cap\mathbb{Z}^d} G_{A,f}(y)$.
\end{itemize}
Moreover, we often use the notation $I_A:=I_{A,0}$.
\end{defi}
We remark that the definition \ref{defiprat} is reminiscent of the structure of backward tree introduced by Kallenberg in \cite{kallenberg_article}.

As a particular case of the Lemma \ref{funlemma}, we can estimate the probability for the branching process to survive in a specified area of $\R^d$.
\begin{prop} \label{equiprob}
Let $A$ be a closed ball. We assume hypotheses $\mathcal{H}_1$ or $\mathcal{H}_2$. For every $n\in\N^*$, let us define $a_n=a\sqrt{n}$ for some $a\in\R^d$. Then we have the following equivalent\\
$$\mathbf{P}(Z_n(A-a_n)\geq 1) \underset{n\rightarrow +\infty}\sim I_A\times (2\pi)^{-d/2}\det(\Sigma)^{-1/2}e^{-\langle a,\Sigma^{-1} a\rangle/2} n^{-d/2}$$
with $I_A$ defined in definition \ref{defiprat}.
\end{prop}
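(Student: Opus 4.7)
The plan is to derive Proposition \ref{equiprob} as a direct specialization of Lemma \ref{funlemma}, by plugging $f \equiv 0$ into the Key Lemma and taking $x = a_n = a\sqrt{n}$. First I would verify that the zero function trivially satisfies the hypotheses on $f$ in Lemma \ref{funlemma}: it is continuous, compactly supported in $A$, and its values lie in $\R_-$ (as well as in $i\R$), so the Key Lemma applies to it. With this choice, the factor $\exp\bigl(\sum_{|u|=n} f(S_u+x)\bigr)$ equals $1$, and the expectation on the left-hand side of the identity in Lemma \ref{funlemma} reduces exactly to $\mathbf{P}(Z_n(A-x) \geq 1)$. On the right-hand side, the constant $I_{A,0}$ coincides with $I_A$ by Definition \ref{defiprat}, since when $f\equiv 0$ all the $\tilde{Y}_{f,k}$ vanish and the exponential inside $G_{A,0}$ is identically $1$.

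Next, I would fix any $M > ||a||$; then $a_n = a\sqrt{n}$ lies in $B(0, M\sqrt{n})$ for every $n$ (or at least all $n$ large enough), so the uniform estimate of Lemma \ref{funlemma} applies at $x = a_n$. The Gaussian factor simplifies cleanly to a constant:
$$\exp\!\left(-\frac{1}{2n}\langle a_n, \Sigma^{-1} a_n\rangle\right) = \exp\!\left(-\frac{1}{2}\langle a, \Sigma^{-1} a\rangle\right),$$
which is independent of $n$ and contributes the claimed exponential. Rewriting $(2\pi n)^{-d/2} = (2\pi)^{-d/2} n^{-d/2}$ then puts the estimate in exactly the form of the stated equivalent, with the $n^{-d/2}$ displayed on the right.

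There is essentially no genuine obstacle in this proposition: all of the probabilistic machinery (spine decomposition, change of measure, truncation at depth $K$, local-limit theorem, tail estimates on size-biased progeny) has already been concentrated in the proof of Lemma \ref{funlemma}. The proposition is just the value of that general formula at the particular test function $f = 0$ along the diffusive scale $x = a\sqrt{n}$, so the only minor checks are that $f\equiv 0$ lies in the admissible class and that $I_{A,0} = I_A$, both immediate from the definitions.
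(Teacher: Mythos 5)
Your proof is correct and follows precisely the paper's own argument, which simply applies Lemma \ref{funlemma} with $f\equiv 0$ and $M>\|a\|$. You have just spelled out the routine checks (admissibility of $f\equiv 0$, $I_{A,0}=I_A$, simplification of the Gaussian factor at $x=a\sqrt{n}$) that the paper leaves implicit.
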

\begin{proof}
Apply Lemma \ref{funlemma} with $f\equiv0$ and $M$ larger than $||a||$.
\end{proof}
We are now able to prove our first proposition concerning convergence of point processes.
\begin{prop} \label{limitpp}
Let $A$ be a closed ball. We assume hypotheses $\mathcal{H}_1$ or $\mathcal{H}_2$. For every $n\in\N$, let us define $a_n=a\sqrt{n}$ for some $a\in\R^d$.
Then, under probability measure $\mathbf{P}$, we have the following convergence in law\\
$$\mathcal{L}\left(\sum\limits_{\substack{|u|=n}}\delta_{S_u+a_n}|Z_n(A-a_n)\geq 1\right)\xrightarrow[n\rightarrow+\infty]{law}\mathcal{N}_A$$ where $\mathcal{N}_A$ is a point process on $A$ which does not depend on $a$.
Moreover for every $f\in\mathcal{F}_c^+(\R^d)$ such that $supp(f)\subset A$,
$$\E\left[\exp\left(-\int f(x) \mathcal{N}_A(dx)\right) \right]=I_A^{-1}I_{A,-f}$$
where the terms of the right-hand side were defined in definition \ref{defiprat}.
\end{prop}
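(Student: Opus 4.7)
The plan is to derive this proposition as a direct consequence of Lemma \ref{funlemma} combined with Proposition \ref{equiprob}. I will first establish convergence of the conditional characteristic functional and invoke Lemma \ref{existence} to obtain a limit point process $\mathcal{N}_A$, then identify its Laplace functional by specializing the same computation to nonpositive test functions.

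For any continuous $h$ with $\mathrm{supp}(h)\subset A$ taking values in $\R_-$ or in $i\R$, I would write
\[
\E_{\mathbf{P}}\left[\exp\left(\sum_{|u|=n} h(S_u + a_n)\right) \,\Big|\, Z_n(A-a_n)\geq 1\right] = \frac{\mathrm{num}_n(h)}{\mathbf{P}(Z_n(A-a_n)\geq 1)},
\]
where $\mathrm{num}_n(h) := \E_{\mathbf{P}}[\mathbf{1}\{Z_n(A-a_n)\geq 1\}\exp(\sum_{|u|=n} h(S_u+a_n))]$. Since $a_n = a\sqrt{n}$ lies in $B(0, M\sqrt{n})$ as soon as $M > \|a\|$, Lemma \ref{funlemma} will apply uniformly, and using that $\tfrac{1}{2n}\langle a_n, \Sigma^{-1}a_n\rangle = \tfrac{1}{2}\langle a, \Sigma^{-1}a\rangle$ is independent of $n$, it yields
\[
\mathrm{num}_n(h) \sim (2\pi n)^{-d/2}\det(\Sigma)^{-1/2}\, e^{-\langle a, \Sigma^{-1}a\rangle/2}\, I_{A,h}.
\]
After dividing by the equivalent given by Proposition \ref{equiprob}, the ratio will converge to $I_A^{-1} I_{A,h}$, a quantity manifestly independent of $a$. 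Specializing to $h = i\eta f$ for $f \in \mathcal{F}_c(\R^d)$ with $\mathrm{supp}(f)\subset A$ and $\eta \in \R$, the conditional characteristic functional converges to $\Phi_f(\eta) := I_A^{-1} I_{A, i\eta f}$. I would next check continuity of $\eta \mapsto \Phi_f(\eta)$ by dominated convergence: the integrand defining $G_{A, i\eta f}(y)$ has modulus at most $1$ and is jointly continuous in $\eta$ almost surely (via the Borel--Cantelli argument at the end of the proof of Lemma \ref{funlemma}, the two series in its definition reduce to finite sums $\mathbf{Q}^*$-almost surely), while $\mathbf{1}_A$ is integrable against the Lebesgue measure under $\mathcal{H}_1$ and against the counting measure on $A\cap \mathbb{Z}^d$ under $\mathcal{H}_2$. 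Lemma \ref{existence} then delivers the desired limit point process $\mathcal{N}_A$.

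To identify its Laplace functional, I would finally apply the same scheme with $h=-f$, valued in $\R_-$, for $f \in \mathcal{F}_c^+(\R^d)$ with $\mathrm{supp}(f)\subset A$: the ratio converges to $I_A^{-1}I_{A,-f}$, which is exactly the announced formula. All the real work is already contained in Lemma \ref{funlemma}; the only delicate aspect will be invoking its uniformity in $x \in B(0,M\sqrt{n})$, which is precisely what is needed to apply it at $x = a_n = a\sqrt{n}$.
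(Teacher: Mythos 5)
Your proposal is correct and follows essentially the same route as the paper's proof: apply Lemma \ref{funlemma} at $x=a_n$ (covered by the uniformity on $B(0,M\sqrt{n})$ for $M>\|a\|$), divide by the survival probability from Proposition \ref{equiprob}, obtain $I_A^{-1}I_{A,i\eta f}$ whose continuity in $\eta$ follows by dominated convergence, then invoke Lemma \ref{existence} and repeat with $h=-f$ to read off the Laplace functional. The only cosmetic difference is that you spell out explicitly why the limit is independent of $a$ and why $a_n$ falls in the region of uniformity, which the paper leaves implicit.
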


\begin{proof}
By Lemma \ref{existence}, it is enough to prove that for every $f\in\mathcal{F}_c(\R^d)$ such that $supp(f)\subset A$, there exists a continuous function $\Phi_f$ such that for every $\eta\in\R$,
$$\E_{\mathbf{P}}\left[\exp\left(i\eta\sum\limits_{\substack{|u|=n}}f(S_u+a_n)\right)\Bigg|Z_n(A-a_n)\geq 1\right]\xrightarrow[n\rightarrow +\infty]{}\Phi_f(\eta).$$\\
By Lemma \ref{funlemma}, 
$$\E_{\mathbf{P}}\left(\exp\left(i\eta\sum\limits_{\substack{|u|=n}}f(S_u+a_n) \right)\textbf{1}\{Z_n(A-a_n)\geq 1 \} \right)$$
is asymptotically
 $$(1+o_n(1))\frac{I_{A,i\eta f}}{(2\pi)^d}\det(\Sigma)^{-1/2}e^{-\langle a,\Sigma^{-1 }a\rangle/2}\frac{1}{n^{d/2}}.$$
Combining this with Proposition \ref{equiprob} we get that 
$$\E_{\mathbf{P}}\left(\exp\left(i\eta\sum\limits_{\substack{|u|=n}}f(S_u) \right)\Bigg|Z_n(A-a_n)\geq 1\right)\xrightarrow[n\rightarrow+\infty]{}I_A^{-1}I_{A,i\eta f}.$$

Looking at the expression of $I_{A,i\eta f}$ which is given in definition \ref{defiprat}, we deduce from dominated convergence theorem that $\eta\mapsto I_{A,i\eta f}$  is a continuous function. It concludes the proof of the first part of Proposition \ref{limitpp} concerning convergence in law toward a point process $\mathcal{N}_A$. The second part of Proposition \ref{limitpp} concerning the Laplace transform of $\mathcal{N}_A$ is obtained by exactly the same computations. We just have to replace $i\eta f$ by $-f$.
\end{proof}
\section{Proof of Theorem \ref{cv}}  \label{seccv} 

\paragraph{Strategy of the proof of Theorem \ref{cv}}~

If a critical branching process starts from a single particle and if we condition it to visit a closed ball $A$, we proved in the previous section that the limiting point process is $\mathcal{N}_A$. However, most of the critical branching processes starting from particles in $\Lambda_0^{d,\theta}$ will not reach $A$ because of transience (in dimension $d\geq 3$) or because of extinction of the branching process. Let us make this intuition more quantitative. By homogeneity of the Poisson point process $\Lambda_0^{d,\theta}$, for every $M\geq 1$, $$\Lambda_0^{d,\theta}(B(0,M\sqrt{n}))\simeq \Theta(n^{d/2}).$$
Moreover, any particle of $\Lambda_0^{d,\theta}$ located at $x\in\R^d$ with $||x||\gg \sqrt{n}$ is too far from $A$ to have descendants in $A$ at time $n$. Indeed, a centered random walk with second moment is at distance $O\left(\sqrt{n}\right)$ from 0 at time $n$. Moreover, we proved in the previous section that a particle in $\Lambda_0^{d,\theta}$ located at $x\in B(0,M\sqrt{n})$ has descendants which reach $A$ at time $n$ with probability $$\mathbf{P}(Z_n(A-x)\geq 1)\simeq\Theta(n^{-d/2}).$$
If $Z_n^{(y)}(\cdot)$ is the occupation measure of the $n$-th generation of the critical branching process starting from a particle $y$ in the support of $\Lambda_0^{d,\theta}$, combining both previous approximations, we get that
$$\int \textbf{1}\{Z_n^{(y)}(A)\geq 1\}d\Lambda_0^{d,\theta}(y)\simeq \Theta(n^{d/2})\times\Theta(n^{-d/2})=\Theta(1). $$
Therefore, the number $P_A$ of particles in $\Lambda_0^{d,\theta}$ whose descendants reach $A$ is of order $\Theta(1)$. Moreover, for each of these particles, the positions of the descendants in $A$ form a point process distributed as $\mathcal{N}_A$. That is why, we will get an independent layering of $P_A$ copies of $\mathcal{N}_A$.

Now, let us prove the following lemma.
\begin{lem} \label{intensity}
Let $A$ be a closed ball. Let $f\in\mathcal{F}_c(\R^d)$ whose values are in $\R_-$ or in $i\R$ and such that $supp(f)\subset A$. Then, assuming hypothesis $\mathcal{H}_1$,
$$\displaystyle\E_{\mathbf{P}}\left[\int_{\R^d}\textbf{1}\{Z_n(A-x)\geq 1\}\exp\left( \sum\limits_{\substack{|u|=n}}f(S_u+x)\right)dx \right]\underset{n\rightarrow +\infty}\sim I_{A,f}$$
where the definition of $I_{A,f}$ is given in definition \ref{defiprat}.
Moreover, assuming hypothesis $\mathcal{H}_2$,
$$\displaystyle\E_{\mathbf{P}}\left[\sum\limits_{x\in\mathbb{Z}^d}\textbf{1}\{Z_n(A-x)\geq 1\}\exp\left( \sum\limits_{\substack{|u|=n}}f(S_u+x)\right) \right]\underset{n\rightarrow +\infty}\sim I_{A,f}.$$
\end{lem}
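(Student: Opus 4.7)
The idea is to split the domain of integration (respectively summation) according to whether $\|x\|\leq M\sqrt{n}$ or $\|x\|>M\sqrt{n}$, for some large constant $M$ to be sent to infinity at the end. On the bulk $\|x\|\leq M\sqrt{n}$ the pointwise asymptotic of Lemma \ref{funlemma} is already \emph{uniform in $x$}, so we can plug it in and recognize a Gaussian integral. On the tail $\|x\|>M\sqrt{n}$ we will bound things crudely using the heat-kernel estimate and the fact that $|\exp(\sum_{|u|=n}f(S_u+x))|\leq 1$ (since $f$ takes values in $\R_-$ or in $i\R$).

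\textbf{Main term.} Applying Lemma \ref{funlemma}, uniformly in $x\in B(0,M\sqrt{n})$,
$$\E_{\mathbf{P}}\!\left[\mathbf{1}\{Z_n(A-x)\geq 1\}\exp\!\Bigl(\sum_{|u|=n} f(S_u+x)\Bigr)\right]=(1+o_n(1))\,(2\pi n)^{-d/2}\det(\Sigma)^{-1/2}e^{-\langle x,\Sigma^{-1}x\rangle/(2n)}\,I_{A,f}.$$
Integrating (or summing) over $B(0,M\sqrt{n})$ and performing the change of variables $y=x/\sqrt{n}$, the prefactor $n^{-d/2}$ is cancelled by the Jacobian $n^{d/2}$ (respectively by a Riemann-sum approximation under $\mathcal{H}_2$), so the main term converges to
$$I_{A,f}\cdot (2\pi)^{-d/2}\det(\Sigma)^{-1/2}\int_{B(0,M)} e^{-\langle y,\Sigma^{-1}y\rangle/2}\,dy,$$
which tends to $I_{A,f}$ as $M\to\infty$ by normalization of the Gaussian density.

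\textbf{Tail term.} Because $f$ takes values in $\R_-\cup i\R$, the modulus of the exponential is $\leq 1$, hence the tail contribution is bounded in modulus by
$$\int_{\|x\|>M\sqrt{n}}\mathbf{P}(Z_n(A-x)\geq 1)\,dx \;\leq\; \int_{\|x\|>M\sqrt{n}}\E_{\mathbf{P}}[Z_n(A-x)]\,dx,$$
(and similarly with a sum over $\mathbb{Z}^d$ under $\mathcal{H}_2$). By criticality of $\mu$, $\E_{\mathbf{P}}[Z_n(A-x)]=\P(\hat{S}_n\in A-x)$ where $\hat{S}$ is the random walk with step law $\mathcal{P}$. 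Under $\mathcal{H}_1$ this is an explicit Gaussian probability, and under $\mathcal{H}_2$ the heat-kernel estimate of Proposition \ref{heat} yields $\P(\hat{S}_n=y)\leq C_1 n^{-d/2}e^{-\|y\|^2/(C_1 n)}$. In either case, changing variables $y=x/\sqrt{n}$, the tail is bounded by $C|A|\int_{\|y\|>M}g(y)\,dy$ for some integrable function $g$ (a Gaussian density, up to constants), and this is $o_M(1)$ uniformly in $n$.

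\textbf{Conclusion.} Combining the two estimates, for every $\varepsilon>0$ one can choose $M$ large enough so that the tail is at most $\varepsilon$ uniformly in $n$, while the main term converges to $I_{A,f}(1+O(\varepsilon))$. Letting $n\to\infty$ first and then $M\to\infty$ gives the desired equivalent. The only mildly delicate point is the uniformity of the asymptotic in Lemma \ref{funlemma} on balls of radius $M\sqrt{n}$, which is precisely why that lemma was stated with the ``uniformly in $x\in B(0,M\sqrt{n})$'' qualifier; the rest is a straightforward bulk/tail split relying on the heat-kernel bound to handle the escape of mass at scale $\gg\sqrt{n}$.
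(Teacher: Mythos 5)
Your proposal is correct and follows essentially the same route as the paper: a bulk/tail split at scale $M\sqrt{n}$, with Lemma \ref{funlemma} applied uniformly on the bulk to produce a Gaussian integral, and the criticality-plus-heat-kernel bound $\mathbf{P}(Z_n(A-x)\geq 1)\leq \P(\hat{S}_n\in A-x)$ to kill the tail uniformly in $n$. The paper writes out the computation only for $f\equiv 0$ and under $\mathcal{H}_2$, but your observation that $|\exp(\sum_{|u|=n}f(S_u+x))|\leq 1$ is exactly the reason the general case is identical, and your order of limits ($n\to\infty$ then $M\to\infty$) matches the paper's $o_M(1)+M^d o_n(1)$ bookkeeping.
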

\begin{proof}
Let us prove this lemma under the hypothesis $\mathcal{H}_2$. Assuming hypothesis $\mathcal{H}_1$, the proof is similar.
For sake of clarity, we do the proof only with $f\equiv 0$. Let $M>0$. Let us use the notation $B_{\mathbb{Z}^d}(0,t)$ for the set of elements of $\mathbb{Z}^d$ whose euclidean norm is less than $t$. $B(0,t)$ denotes the standard euclidean ball.
By Markov inequality and criticality of the branching process under $\mathbf{P}$, we get for every $n\in\N$,
\begin{align}
\sum\limits_{x\in B_{\mathbb{Z}^d}(0, M\sqrt{n})^c}\mathbf{P}(Z_n(A-x)\geq 1)&\leq \sum\limits_{x\in B_{\mathbb{Z}^d}(0, M\sqrt{n})^c}\E_{\mathbf{P}}\left[Z_n(A-x) \right]\nonumber\\
&=\sum\limits_{x\in B_{\mathbb{Z}^d}(0, M\sqrt{n})^c}\P(\hat{S}_n\in A-x). \label{terme2}
\end{align}
where $\left(\hat{S}_k \right)_{k\in\N}$ is a branching random walk with motion law $\mathcal{P}$. Therefore, by Proposition \ref{heat}, there exists $C>0$ such that for every $n\in\N$:
\begin{align}
\sum\limits_{x\in B_{\mathbb{Z}^d}(0, M\sqrt{n})^c}\mathbf{P}(Z_n(A-x)\geq 1)&\leq Cn^{-d/2}\sum\limits_{x\in B_{\mathbb{Z}^d}(0, M\sqrt{n})^c}\exp\left(-\frac{||x||^2}{C n}\right). \label{terme22}
\end{align}
Thus, by standard inequalities, there exists a constant $C'>0$ such that for every $n\in\N$,
\begin{align}
\sum\limits_{x\in B_{\mathbb{Z}^d}(0, M\sqrt{n})^c}\mathbf{P}(Z_n(A-x)\geq 1)&\leq C'n^{-d/2}\int_{B(0,M\sqrt{n})^c} \exp\left(-\frac{||x||^2}{C'n}\right) dx\nonumber\\
&= C'\int_{B(0,M)^c} \exp\left(-\frac{||x||^2}{C'}\right) dx\nonumber\\
&=o_M(1).\label{terme23}
\end{align}
Moreover, by Lemma \ref{funlemma},
\begin{align}
\sum\limits_{x\in B_{\mathbb{Z}^d}(0, M\sqrt{n})}\mathbf{P}(Z_n(A-x)\geq 1)&\nonumber\\
&\hspace{-4 cm} =(2\pi n )^{-d/2}\det(\Sigma)^{-1/2}I_{A}
\times\sum\limits_{x\in B_{\mathbb{Z}^d}(0, M\sqrt{n})}\exp\left( -\frac{1}{2n}\langle x,\Sigma^{-1}x\rangle\right)+M^do_n(1).\label{terme24}
\end{align}
However, one can observe that,
\begin{align}
\hspace{-1 cm}(2\pi n )^{-d/2}\det(\Sigma)^{-1/2}\times\sum\limits_{x\in B_{\mathbb{Z}^d}(0, M\sqrt{n})}\exp\left( -\frac{1}{2n}\langle x,\Sigma^{-1}x\rangle\right)\nonumber\\
&\hspace{-9 cm}=(2\pi )^{-d/2}\det(\Sigma)^{-1/2}\int_{B(0,M)}\exp\left( -\frac{1}{2}\langle x,\Sigma^{-1}x\rangle\right)dx\nonumber+M^do_n(1)\\
&\hspace{-9 cm}=1+o_M(1)+M^do_n(1).\nonumber
\end{align}
By substituting this into identity (\ref{terme24}), we get\\
\begin{align}
\sum\limits_{x\in B_{\mathbb{Z}^d}(0, M\sqrt{n})}\mathbf{P}(Z_n(A-x)\geq 1)&=I_A+o_M(1)+M^do_n(1) .\label{terme25}
\end{align}
Then, combining identities (\ref{terme25}) and (\ref{terme23}) yields
\begin{align}
\sum\limits_{x\in \mathbb{Z}^d}\mathbf{P}(Z_n(A-x)\geq 1)=I_A+o_M(1)+M^do_n(1)
\end{align}
which concludes the proof.

\end{proof}
~\\
Now we are ready to prove Theorem \ref{cv}.
\begin{proof}
We only work on the case where $X$ is a constant $\theta$. The general case is obtained by integrating the constant case with respect to the law of $X$, that is, $\P_X$, and by using the dominated convergence theorem.
First we prove Theorem \ref{cv} under the hypothesis $\mathcal{H}_2$.
We recall that under this hypothesis, $\Lambda_0^{d,\theta}$ is a  point process on $\mathbb{Z}^d$ such that each site contains a number of particles following a Poisson distribution of parameter $\theta$. Let $f\in\mathcal{F}_c(\R^d)$. There exists a closed ball $A$ such that $supp(f)\subset A$.  Let $\eta$ be a real number. Recall that, by a small abuse of notation, for any point process $\Theta$, $"x\in\Theta"$ takes into account the multiplicity of $x$ in $\Theta$.
Then, standard computations yield
\begin{align}
\E\left[\exp\left(i\eta \int f(x) \Lambda_n^{d,\theta}(dx)\right) \right]&=\displaystyle\E\left[ \prod\limits_{x\in \Lambda_0^{d,\theta}}\E_{\mathbf{P}}\left[\exp\left( \sum\limits_{\substack{|u|=n} } i\eta f(S_u+x)\right)\right] \right]\nonumber\\
&=\displaystyle\exp\left(\theta\sum_{x\in\mathbb{Z}^d}\E_{\mathbf{P}}\left[\exp\left( \sum\limits_{\substack{|u|=n}} i\eta f(S_u+x)\right) -1 \right] \right).\label{terme31}
\end{align}
Thus, we only need to investigate the asymptotical behaviour of
$$\sum_{x\in\mathbb{Z}^d}\E_{\mathbf{P}}\left[\exp\left( \sum\limits_{\substack{|u|=n}} i\eta f(S_u+x)\right) -1 \right] $$
However, it holds that
\begin{align}
\sum_{x\in\mathbb{Z}^d}\E_{\mathbf{P}}\left[\exp\left( \sum\limits_{\substack{|u|=n}} i\eta f(S_u+x)\right) -1 \right]&\nonumber\\
&\hspace{-6 cm}=\sum_{x\in\mathbb{Z}^d}\E_{\mathbf{P}}\left[\textbf{1}\{Z_n(A-x)\geq 1\}\left(\exp\left( \sum\limits_{\substack{|u|=n}} i\eta f(S_u+x)\right) -1 \right)\right]. \label{terme32}
\end{align}
By Lemma \ref{intensity}, this quantity (\ref{terme32}) converges toward $I_{A,i\eta f}-I_A$. Therefore, together with identity (\ref{terme31}), we deduce
$$
\E\left[\exp\left(i\eta \int f(x) \Lambda_n^{d,\theta}(dx)\right) \right]=(1+o_n(1))\exp\left(\theta (I_{A,i\eta f}-I_A)\right).
$$
By the description of $I_{A,i\eta f}$ given in definition \ref{defiprat}, the function $\eta\mapsto\exp\left(\theta (I_{A,i\eta f}-I_A)\right)$ is continuous. That is why, by Proposition \ref{existence}, there exists a point process  $\Lambda_{\infty}^{d,\theta}$ such that
$$\Lambda_{n}^{d,\theta} \xrightarrow[ n\rightarrow +\infty]{law} \Lambda_{\infty}^{d,\theta}. $$
In order to do the proof under the hypothesis $\mathcal{H}_1$, we just need an identity which is similar to (\ref{terme31}). This can be obtained thanks to the exponential formula for Poisson Point Process. (See \cite{Kallenberg_modern}.) If we replace sums by integrals, the rest of the proof follows the same lines.
Now, let $g\in\mathcal{F}_c(\R^d)^+$ such that $supp(g)\subset A$. Following exactly the same computations than above, we get that,
$$\begin{array}{ll}
\displaystyle \E\left[\exp\left(-\int g(x) \Lambda_{\infty}^{d,\theta}(dx) \right) \right]&= \exp\left(\theta (I_{A,-g}-I_A)\right).
\end{array}$$
Consequently, by Proposition \ref{limitpp},
$$\begin{array}{ll}
\displaystyle \E\left[\exp\left(-\int g(x) \Lambda_{\infty}^{d,\theta}(dx) \right) \right]= \exp\left(\theta I_A\left( \E\left[\exp\left(-\int g(x) \mathcal{N}_A(dx) \right)\right]-1\right)\right).
\end{array}$$
Therefore, $\Lambda_{\infty}^{d,\theta}$ is a Poissonian sum of i.i.d copies of $\mathcal{N}_A$, as stated in Theorem \ref{cv}.
\end{proof}

In the proof of Theorem \ref{cv}, we obtained the Laplace transform of $\Lambda^{d,X}_{\infty}$.\\
\begin{prop}[  Laplace transform]\label{laplace}
For every $f\in\mathcal{F}_c^+(\R^d)$ whose support is included in some closed ball $A$, we have\\
$$\displaystyle\E\left[\exp\left(-\int f(x) \Lambda_{\infty}^{d,X}(dx)\right)\right]=\E\left[\exp\left(X\times\left(\displaystyle I_{A,-f}-I_A \right) \right) \right]$$
where $I_A$ and $I_{A,-f}$ are defined in definition \ref{defiprat}. Another reformulation is
$$\displaystyle\E\left[\exp\left(-\int f(x) \Lambda_{\infty}^{d,X}(dx)\right)\right]=\E\left[\exp\left(X I_A R_{f,A}\right)\right] $$
where $$R_{f,A}=\E\left[\exp\left(-\int f(x)\mathcal{N}_A(dx) \right)\right]-1.$$\\
\end{prop}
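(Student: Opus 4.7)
The plan is to piggyback on what was already established in the proof of Theorem \ref{cv}. For a deterministic intensity $\theta\in\R_+$, the computation there gives
\[
\E\left[\exp\left(-\int f(x)\,\Lambda_{\infty}^{d,\theta}(dx)\right)\right]=\exp\left(\theta(I_{A,-f}-I_A)\right),
\]
so the first identity should follow by conditioning on $X$. Concretely, I would construct $\Lambda_\infty^{d,X}$ as a mixture: conditionally on $X=\theta$, the initial process $\Lambda_0^{d,X}$ is a Poisson point process of intensity $\theta\lambda$ (or $\theta\sum_{x\in\mathbb{Z}^d}\delta_x$ under $\mathcal{H}_2$), hence the entire sequence $(\Lambda_n^{d,X})_{n\in\N}$ conditional on $X=\theta$ is distributed as $(\Lambda_n^{d,\theta})_{n\in\N}$. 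Therefore, writing $\P_X$ for the law of $X$ and applying dominated convergence (the integrand is bounded by $1$ uniformly in $n$ and $\theta$), the convergence in law from Theorem \ref{cv} combined with Fubini yields
\[
\E\left[\exp\left(-\int f\,d\Lambda_\infty^{d,X}\right)\right]=\int_{\R_+}\exp\left(\theta(I_{A,-f}-I_A)\right)\,d\P_X(\theta)=\E\left[\exp\left(X(I_{A,-f}-I_A)\right)\right],
\]
which is the first form announced in the proposition.

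For the second form, the key input is Proposition \ref{limitpp}, which identifies
\[
\E\left[\exp\left(-\int f(x)\,\mathcal{N}_A(dx)\right)\right]=I_A^{-1}\,I_{A,-f}.
\]
Using this, I simply rewrite
\[
I_{A,-f}-I_A=I_A\left(I_A^{-1}I_{A,-f}-1\right)=I_A\left(\E\left[\exp\left(-\int f\,d\mathcal{N}_A\right)\right]-1\right)=I_A\,R_{f,A},
\]
and substitute into the first identity to conclude.

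There is no real obstacle here: the statement is essentially a re-packaging of the last two displays in the proof of Theorem \ref{cv}, extended from a constant $\theta$ to a random variable $X$ by the usual Cox-process/mixing argument. The only point meriting care is the justification of swapping the expectation over $X$ with the limit in $n$, which is immediate because the characteristic/Laplace functional is bounded by $1$, so dominated convergence applies.
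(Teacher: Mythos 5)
Your proof is correct and follows exactly the route the paper itself takes: Proposition \ref{laplace} is treated as a byproduct of the proof of Theorem \ref{cv}, where the constant-$\theta$ Laplace functional $\exp(\theta(I_{A,-f}-I_A))$ is derived, the general random-$X$ case is obtained by mixing with dominated convergence, and the reformulation in terms of $R_{f,A}$ follows from the identity $\E[\exp(-\int f\,d\mathcal{N}_A)]=I_A^{-1}I_{A,-f}$ of Proposition \ref{limitpp}. No gaps, no deviations.
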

\begin{rem}\label{compat}
Let $A$ and $B$ be two closed balls such that $A\subset B$. By the previous Proposition \ref{laplace} we get two formulas to compute the Laplace transform of $ \Lambda^{d, X}_{\infty}$, one with respect to $A$, the other one with respect to $B$. These formulas must be equal. This is discussed in the following section. 
\end{rem}

\section{Compatibility of $\Lambda^{d,X}_{\infty}$} \label{sec4}
\label{compatibility}
Theorem \ref{cv} shows that $\Lambda_{\infty}^{d,X}$, as a limit of $\left(\Lambda_n^{d,X}\right)_{n\in\N}$, is a point process. In particular, it must be compatible. Here, we give an independent proof of compatibility of the limiting point processes of the form $\Lambda_{\infty}^{d,X}$ obtained in Theorem \ref{cv}. In all this section, $X$ is a non-negative random variable.
\begin{lem}\label{quot}
Let $A_1$ and $A_2$ be two closed balls such that $A_1\subset A_2$. We assume hypotheses $\mathcal{H}_1$ or $\mathcal{H}_2$. Then\\
$$\displaystyle\P(\mathcal{N}_{A_2}(A_1)\geq 1)=\frac{I_{A_1}}{I_{A_2}}$$
where $I_{A_1}$ and $I_{A_2}$ are defined in definition \ref{defiprat}.
\end{lem}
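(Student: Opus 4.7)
The idea is to realize $\mathbf{P}(\mathcal{N}_{A_2}(A_1)\ge 1)$ as the $n\to\infty$ limit of a conditional probability for the branching process, and then compute that conditional probability as an elementary ratio of two instances of Proposition~\ref{equiprob}. I would take $a=0$ (so $a_n=0$) in Proposition~\ref{limitpp}, so that the conditional law of $\Xi_n:=\sum_{|u|=n}\delta_{S_u}$ given $\{Z_n(A_2)\ge 1\}$ converges vaguely in distribution to $\mathcal{N}_{A_2}$.

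Since $A_1\subset A_2$, the event $\{Z_n(A_1)\ge 1\}$ is contained in $\{Z_n(A_2)\ge 1\}$, so
$$
\mathbf{P}\!\left(\Xi_n(A_1)\ge 1\,\middle|\,Z_n(A_2)\ge 1\right)
=\frac{\mathbf{P}(Z_n(A_1)\ge 1)}{\mathbf{P}(Z_n(A_2)\ge 1)}.
$$
Applying Proposition~\ref{equiprob} to $A_1$ and $A_2$ separately with $a=0$, both numerator and denominator are asymptotic to $I_{A_i}\,(2\pi n)^{-d/2}\det(\Sigma)^{-1/2}$. The common prefactor cancels, so the ratio tends to $I_{A_1}/I_{A_2}$ as $n\to\infty$.

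The remaining step is to identify this limit with $\mathbf{P}(\mathcal{N}_{A_2}(A_1)\ge 1)$. This is the only nontrivial point, since the functional $\xi\mapsto\mathbf{1}\{\xi(A_1)\ge 1\}$ is not vaguely continuous, so its convergence in law must be argued via a portmanteau-type statement. What is needed is that $A_1$ is a continuity set for $\mathcal{N}_{A_2}$, i.e.\ $\mathbf{P}(\mathcal{N}_{A_2}(\partial A_1)=0)=1$. Under $\mathcal{H}_2$ the process $\mathcal{N}_{A_2}$ is supported on $\mathbb{Z}^d$, so an arbitrarily small perturbation of the radius of $A_1$ makes $\partial A_1\cap\mathbb{Z}^d=\emptyset$ and the continuity-set property is trivial; under $\mathcal{H}_1$, differentiating the Laplace transform in Proposition~\ref{limitpp} shows that the intensity measure of $\mathcal{N}_{A_2}$ is absolutely continuous on $A_2$, hence $\mathbf{E}[\mathcal{N}_{A_2}(\partial A_1)]=0$. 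A standard sandwich between two continuity sets $A_1^-\subset A_1\subset A_1^+$ together with continuity of $A\mapsto I_A$ under monotone shrinkage/enlargement then upgrades this to the general case, and the portmanteau theorem for vague convergence (cf.\ Theorem~4.11 in \cite{Kallenberg_modern}) yields
$$
\mathbf{P}(\Xi_n(A_1)\ge 1\mid Z_n(A_2)\ge 1)\xrightarrow[n\to+\infty]{}\mathbf{P}(\mathcal{N}_{A_2}(A_1)\ge 1).
$$

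The hardest point is precisely this measure-theoretic identification (checking the continuity-set property and handling the boundary behaviour); once it is in place, the asymptotic ratio $I_{A_1}/I_{A_2}$ is immediate from Proposition~\ref{equiprob}.
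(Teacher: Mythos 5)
Your proof is correct and follows exactly the paper's strategy: realize $\mathcal{N}_{A_2}$ via Proposition~\ref{limitpp} as the limit of the branching process conditioned on $\{Z_n(A_2)\ge 1\}$, write $\mathbf{P}(Z_n(A_1)\ge 1\mid Z_n(A_2)\ge 1)$ as the ratio $\mathbf{P}(Z_n(A_1)\ge 1)/\mathbf{P}(Z_n(A_2)\ge 1)$ using $A_1\subset A_2$, and evaluate the ratio with Proposition~\ref{equiprob}. The only divergence is cosmetic: where you sketch a portmanteau/continuity-set argument (and an arguably redundant sandwich step) to pass from vague convergence in law to convergence of $\mathbf{P}(\xi_n(A_1)\ge 1)$, the paper simply cites Lemma~4.1 of \cite{Kallenberg_modern} to do this identification in one line.
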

\begin{proof}
By Proposition \ref{limitpp}, we know that under $\mathbf{P}$,
$$\mathcal{L}\left(\sum\limits_{|u|=n}\delta_{S_u}\Bigg|Z_n(A_2)\geq 1\right)\xrightarrow[n\rightarrow +\infty]{law} \mathcal{N}_{A_2}.$$ \\
Then, by Lemma 4.1 in \cite{Kallenberg_modern}, we get\\
$$\P(\mathcal{N}_{A_2}(A_1)\geq 1)=\underset{n\rightarrow +\infty}\lim  \mathbf{P}(Z_n(A_1)\geq 1|Z_n(A_2)\geq 1)=\frac{\mathbf{P}(Z_n(A_1)\geq 1)}{\mathbf{P}(Z_n(A_2)\geq 1)}$$\\
The estimate given by Proposition \ref{equiprob} concludes the proof.
\end{proof}
\begin{lem} \label{lemi}
Let $A_1$ and $A_2$ be two closed balls such that $A_1\subset A_2$. We assume hypotheses $\mathcal{H}_1$ or $\mathcal{H}_2$.
$$\mathcal{L}\left(\mathcal{N}_{A_2}(\cdot\cap A_1) \Bigg|\mathcal{N}_{A_2}(A_1)\geq 1\right)\overset{law}=\mathcal{N}_{A_1}.$$
\end{lem}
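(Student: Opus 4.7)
My plan is to check equality of Laplace transforms via Lemma \ref{egaloi}. Fix $f \in \mathcal{F}_c^+(\R^d)$ with $\mathrm{supp}(f) \subset A_1$; I will show
$$\E\left[\exp\left(-\int f\,d\mathcal{N}_{A_2}\right) \Big| \mathcal{N}_{A_2}(A_1) \geq 1\right] = \E\left[\exp\left(-\int f\,d\mathcal{N}_{A_1}\right)\right].$$
By Proposition \ref{limitpp} applied to $A_1$, the right-hand side equals $I_{A_1,-f}/I_{A_1}$. For the left-hand side, I split $\E[\exp(-\int f\,d\mathcal{N}_{A_2})]$ according to whether $\mathcal{N}_{A_2}(A_1)$ vanishes or not. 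Since $f$ is supported in $A_1$, on $\{\mathcal{N}_{A_2}(A_1) = 0\}$ the integral against $\mathcal{N}_{A_2}$ is zero, so that piece simply equals $\P(\mathcal{N}_{A_2}(A_1) = 0) = 1 - I_{A_1}/I_{A_2}$ by Lemma \ref{quot}. Proposition \ref{limitpp} applied to $A_2$ (permitted since $\mathrm{supp}(f) \subset A_1 \subset A_2$) gives $\E[\exp(-\int f\,d\mathcal{N}_{A_2})] = I_{A_2,-f}/I_{A_2}$, and dividing the resulting numerator by $\P(\mathcal{N}_{A_2}(A_1) \geq 1) = I_{A_1}/I_{A_2}$ yields
$$\text{LHS} = \frac{I_{A_2,-f} - I_{A_2} + I_{A_1}}{I_{A_1}}.$$

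Thus the lemma reduces to the single identity $I_{A_2,-f} - I_{A_2} = I_{A_1,-f} - I_{A_1}$. I would deduce this from Lemma \ref{intensity} by rewriting, for $A \in \{A_1, A_2\}$,
$$I_{A,-f} - I_A = \lim_{n \to \infty} \sum_{x} \E_{\mathbf{P}}\left[\mathbf{1}\{Z_n(A-x) \geq 1\} \left(e^{-\sum_{|u|=n} f(S_u+x)} - 1\right)\right],$$
where the sum is over $\mathbb{Z}^d$ under $\mathcal{H}_2$ (or the corresponding integral over $\R^d$ under $\mathcal{H}_1$). The crucial observation is that on $\{Z_n(A_1-x) = 0\}$ no particle of generation $n$ satisfies $S_u + x \in \mathrm{supp}(f) \subset A_1$, so $\sum_{|u|=n} f(S_u+x) = 0$ and the factor $e^{-\sum f} - 1$ vanishes there. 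Since $\{Z_n(A_1-x) \geq 1\} \subset \{Z_n(A_2-x) \geq 1\}$, the integrands for $A_1$ and $A_2$ therefore coincide pointwise, and the required identity follows after summing over $x$ and passing to the limit.

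The main obstacle is really only the pointwise vanishing identity above: this is where the hypothesis $\mathrm{supp}(f) \subset A_1$ enters in a non-trivial way, and it is what makes the algebraic cancellation $I_{A_2,-f} - I_{A_2} = I_{A_1,-f} - I_{A_1}$ work. Once this is in hand, the whole argument collapses into the simple manipulation above, and no further analytic work is needed beyond what Lemma \ref{funlemma}, Lemma \ref{intensity}, Proposition \ref{limitpp} and Lemma \ref{quot} already provide.
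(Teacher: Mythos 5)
Your proof is correct. It shares the central mechanism with the paper's argument — splitting according to whether $\mathcal{N}_{A_2}$ charges $A_1$ and exploiting the vanishing of $f$ outside $A_1$ — but executes it along a somewhat different route. The paper expresses $\E\left[\exp\left(-\int f\,d\mathcal{N}_{A_2}\right)\right]$ as the limit in Proposition \ref{limitpp}, decomposes the pre-limit numerator $\E_{\mathbf{P}}\left[\exp\left(-\sum_{|u|=n} f(S_u)\right)\textbf{1}\{Z_n(A_2)\geq 1\}\right]$ into its $\{Z_n(A_1)\geq 1\}$ part and a pure survival-probability remainder, and then controls everything through ratios via Lemma \ref{quot} and Proposition \ref{equiprob}. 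You instead substitute the explicit Laplace transforms $I_{A,-f}/I_A$ from Proposition \ref{limitpp}, carry out the conditioning by elementary algebra, and reduce the whole lemma to the single identity $I_{A_2,-f}-I_{A_2}=I_{A_1,-f}-I_{A_1}$, which you verify via Lemma \ref{intensity} and the same pointwise vanishing of $e^{-\sum f}-1$ on $\{Z_n(A_1-x)=0\}$. The upshot is that your version makes explicit a structural fact the paper uses only implicitly: the quantity $I_{A,-f}-I_A$ does not change when $A$ is enlarged past $supp(f)$ (this is not obvious from Definition \ref{defiprat} alone, since $G_{A_1,f}$ and $G_{A_2,f}$ have different denominators). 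In exchange you invoke Lemma \ref{intensity}, which the paper's proof of this lemma does not need; beyond that the two arguments draw on the same toolbox and are of comparable length.
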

\begin{proof}
Let $f\in\mathcal{F}_c^+(\R^d)$ such that $supp(f)\subset A_1$.
Let us observe that,
\begin{align} \label{egasec1}
\displaystyle\E\left[\exp\left(- \int f(x) \mathcal{N}_{A_2}(dx) \right)\textbf{1}\{\mathcal{N}_{A_2}(A_1)\geq 1\}\right]&\nonumber\\
&\hspace{-7 cm}=\E\left[\exp\left(-\int f(x) \mathcal{N}_{A_2}(dx) \right)\right]-\P(\mathcal{N}_{A_2}(A_1)=0).
\end{align}
However by Proposition \ref{limitpp},\\
\begin{align} \label{egasec2}
\displaystyle\underset{n\rightarrow +\infty}\lim \frac{\E_{\mathbf{P}}\left[\exp\left(-\sum\limits_{|u|=n}f(S_u) \right)\textbf{1}\{Z_n(A_2)\geq 1\} \right]}{\mathbf{P}(Z_n(A_2)\geq 1)}=\E\left[\exp\left(-\int f(x) \mathcal{N}_{A_2}(dx)\right)\right].
\end{align}
The left-hand side in (\ref{egasec2}) can be rewritten as
$$\displaystyle\frac{\E_{\mathbf{P}}\left[\exp\left(-\sum\limits_{|u|=n}f(S_u) \right)\textbf{1}\{Z_n(A_1)\geq 1\} \right]+\mathbf{P}(Z_n(A_2)\geq 1,Z_n(A_1)=0)}{\mathbf{P}(Z_n(A_1)\geq 1)}\times \frac{\mathbf{P}(Z_n(A_1)\geq 1)}{\mathbf{P}(Z_n(A_2)\geq 1)} .$$
By Lemma \ref{quot} and Propositions \ref{equiprob} and \ref{limitpp}, this converges toward

$$ \left( \E\left[\exp\left(-\int f(x) \mathcal{N}_{A_1}(dx) \right)\right]+\frac{1}{\P(\mathcal{N}_{A_2}(A_1)\geq 1)}-1\right)\P(\mathcal{N}_{A_2}(A_1)\geq 1).$$
Consequently, together with (\ref{egasec2}), this implies that
\begin{align}
\E\left[\exp\left( -\int f(x) \mathcal{N}_{A_2}(dx) \right)\right]&\nonumber\\
&\hspace{-4 cm}=\left( \E\left[\exp\left(-\int f(x) \mathcal{N}_{A_1}(dx) \right)\right]+\frac{1}{\P(\mathcal{N}_{A_2}(A_1)\geq 1)}-1\right)\P(\mathcal{N}_{A_2}(A_1)\geq 1).\label{egasec3}
\end{align}
Therefore, using identities (\ref{egasec1}) and (\ref{egasec3}), we know that\\
\begin{align}
\E\left[\exp\left(-\int f(x) \mathcal{N}_{A_2}(dx) \right)\Bigg|\mathcal{N}_{A_2}(A_1)\geq 1\right]&\nonumber\\
&\hspace{-6.5cm}=\E\left[\exp\left(- \int f(x) \mathcal{N}_{A_1}(dx) \right)\right]+\frac{1}{\P(\mathcal{N}_{A_2}(A_1)\geq 1)}-1-\frac{\P(\mathcal{N}_{A_2}(A_1)=0)}{\P(\mathcal{N}_{A_2}(A_1)\geq 1)}.\nonumber
\end{align}
This is exactly $\E\left[\exp\left(- \int f(x) \mathcal{N}_{A_1}(dx)\right)\right]$. Thus, by Lemma \ref{egaloi}, this finishes the proof.

\end{proof}
Thanks to the two previous lemmas, we are now able to deduce the compatibility of $\Lambda^{d,X}_{\infty}$.
\begin{prop}
Let $A_1$ and $A_2$ be two closed balls such that $A_1\subset A_2$. We assume hypotheses $\mathcal{H}_1$ or $\mathcal{H}_2$.
In Theorem \ref{cv}, we get a Poissonian way to define $\xi_{A_1}:=\Lambda^{d,X}_{\infty}(\cdot\cap A_1)$ and $\xi_{A_2}:=\Lambda^{d,X}_{\infty}(\cdot\cap A_2)$. Then,
$$\xi_{A_2}(\cdot\cap A_1)\overset{law}=\xi_{A_1}.$$
\end{prop}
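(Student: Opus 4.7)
The plan is to prove equality in law by comparing Laplace transforms, using the Poissonian description of $\xi_{A_2}$ and $\xi_{A_1}$ given in Theorem \ref{cv} together with the two preparatory Lemmas \ref{quot} and \ref{lemi}. Let $f \in \mathcal{F}_c^+(\R^d)$ with $\mathrm{supp}(f) \subset A_1$. Since the support lies in $A_1$, we first observe that $\int f(x)\, d\xi_{A_2}(\cdot \cap A_1)(x) = \int f(x)\, d\xi_{A_2}(x)$, so it is enough to compare $\E\bigl[\exp(-\int f\, d\xi_{A_2})\bigr]$ and $\E\bigl[\exp(-\int f\, d\xi_{A_1})\bigr]$ on all such $f$, and then invoke Lemma \ref{egaloi}.

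Next, I would apply the Poissonian description of $\xi_{A_i}$ (equivalently, Proposition \ref{laplace}) to each side. Conditionally on $X$, $\xi_{A_i}$ is a Poissonian superposition of $P_{A_i} \sim \mathrm{Poisson}(X I_{A_i})$ i.i.d.\ copies of $\mathcal{N}_{A_i}$, hence
\[
\E\!\left[\exp\!\left(-\int f\, d\xi_{A_i}\right)\,\Big|\, X\right] = \exp\!\bigl(X\, I_{A_i}\, R_{f,A_i}\bigr), \quad R_{f,A_i} := \E\!\left[\exp\!\left(-\int f\, d\mathcal{N}_{A_i}\right)\right] - 1.
\]
Thus the task reduces to the deterministic identity $I_{A_2}\, R_{f,A_2} = I_{A_1}\, R_{f,A_1}$.

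To establish this identity I would decompose the Laplace functional of $\mathcal{N}_{A_2}$ according to whether $\mathcal{N}_{A_2}(A_1)\geq 1$ or $=0$. On the event $\{\mathcal{N}_{A_2}(A_1) = 0\}$, since $\mathrm{supp}(f) \subset A_1$, we have $\exp(-\int f\, d\mathcal{N}_{A_2}) = 1$, so
\[
\E\!\left[\exp\!\left(-\int f\, d\mathcal{N}_{A_2}\right)\right] = \P\bigl(\mathcal{N}_{A_2}(A_1) \geq 1\bigr)\, \E\!\left[\exp\!\left(-\int f\, d\mathcal{N}_{A_2}\right)\,\Big|\, \mathcal{N}_{A_2}(A_1)\geq 1\right] + \P\bigl(\mathcal{N}_{A_2}(A_1)=0\bigr).
\]
By Lemma \ref{lemi}, the conditional law of $\mathcal{N}_{A_2}(\cdot \cap A_1)$ given $\{\mathcal{N}_{A_2}(A_1)\geq 1\}$ is that of $\mathcal{N}_{A_1}$, and again $\mathrm{supp}(f) \subset A_1$ lets us replace $\mathcal{N}_{A_2}$ by $\mathcal{N}_{A_2}(\cdot \cap A_1)$ inside the integral. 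Subtracting $1$ from both sides yields
\[
R_{f,A_2} = \P\bigl(\mathcal{N}_{A_2}(A_1)\geq 1\bigr)\, R_{f,A_1},
\]
and Lemma \ref{quot} identifies this probability as $I_{A_1}/I_{A_2}$, giving exactly the desired $I_{A_2}\, R_{f,A_2} = I_{A_1}\, R_{f,A_1}$.

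There is essentially no obstacle here beyond the bookkeeping just outlined: all the substantive work has been done in Theorem \ref{cv}, Proposition \ref{laplace}, and Lemmas \ref{quot}--\ref{lemi}; the present proposition is a clean assembly that leverages the key feature $\mathrm{supp}(f)\subset A_1$ to trivialize the contribution of particles in $A_2 \setminus A_1$. One mild point of care is to justify taking conditional expectations inside the expectation over $X$; this is immediate because all Laplace functionals are bounded by $1$ and the identity $I_{A_2} R_{f,A_2} = I_{A_1} R_{f,A_1}$ is $X$-free, so averaging over $X$ preserves the equality of Laplace transforms and Lemma \ref{egaloi} concludes.
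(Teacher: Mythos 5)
Your proof is correct, but it takes a genuinely different route from the one the paper uses for this proposition. The paper argues \emph{structurally}: writing $\xi_{A_2}=\sum_{k=1}^{P_{A_2}}\mathcal{N}_{A_2}^{(k)}$, it restricts each summand to $A_1$, observes that only the copies with $\mathcal{N}_{A_2}^{(k)}(A_1)\geq 1$ contribute, invokes Lemma \ref{lemi} to identify each surviving restricted copy in law with $\mathcal{N}_{A_1}$, and then uses Poisson thinning plus Lemma \ref{quot} to check that the number of survivors is $\mathrm{Poisson}(XI_{A_2}\cdot\P(\mathcal{N}_{A_2}(A_1)\geq1))=\mathrm{Poisson}(XI_{A_1})$. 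You instead argue \emph{analytically} through Laplace functionals (Proposition \ref{laplace}), reducing the claim to the $X$-free identity $I_{A_2}R_{f,A_2}=I_{A_1}R_{f,A_1}$, which you then verify by conditioning on $\{\mathcal{N}_{A_2}(A_1)\geq 1\}$ and applying Lemmas \ref{lemi} and \ref{quot}. Both routes rest on the same two lemmas, and your computation of $I_{A_2}R_{f,A_2}=I_{A_1}R_{f,A_1}$ is, in fact, exactly the calculation the paper does later inside the proof of Theorem \ref{charac}; you have simply noticed that it already proves the present proposition. The paper's thinning argument is arguably a touch more illuminating (it exhibits the consistent family of point processes explicitly and avoids Laplace transforms), while yours is shorter once Proposition \ref{laplace} is in hand and shows more transparently why the statement reduces to a deterministic identity among the $I_A$'s and $R_{f,A}$'s. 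Either is a clean proof.
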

\begin{proof}
Let us first construct $\xi_{A_2}$ with the construction given in Theorem \ref{cv}.
Consider $P_{A_2}$ a Poisson Random variable of parameter $XI_{A_2}$ and $\left(\mathcal{N}_{A_2}^{(k)}\right)_{k\in\N}$ a family of i.i.d copies of $\mathcal{N}_{A_2}$ which is independent of $P_{A_2}$. Then,  $$\xi_{A_2}=\sum\limits_{k=1}^{P_{A_2}} \mathcal{N}_{A_2}^{(k)}.$$
With this construction,
$$ \xi_{A_2}(\cdot\cap A_1)=\displaystyle\sum\limits_{k=1}^{P_{A_2}} \textbf{1}\{\mathcal{N}^{(k)}_{A_2}(A_1)\geq 1 \}\mathcal{N}_{A_2}^{(k)}(\cdot\cap A_1).$$\\
By the previous Lemma \ref{lemi}, we only have to check that $\sum\limits_{k=1}^{P_{A_2}}\textbf{1}\{\mathcal{N}_{A_2}^{(k)}(A_1)\geq 1 \}$ has a Poisson law with parameter $X I_{A_1}$.
It is a Poissonian sum of independent Bernoulli random variables of parameter $\P(\mathcal{N}_{A_2}(A_1)\geq 1)$. It is well known that this is still a Poisson random variable with parameter $XI_{A_2}\times \P(\mathcal{N}_{A_2}(A_1)\geq 1)$. This parameter is equal to $XI_{A_1}$ by Lemma \ref{quot}.
\end{proof}
\section{Characterization of invariant measures} \label{sec5}
\paragraph{Strategy of the proof of theorem \ref{charac}}
Let us consider a cluster-invariant point process $\Theta$. Let $f\in\mathcal{F}^+_c(\R^d)$ whose support is included in a closed ball $A$.
By cluster-invariance, we remark that for every $n\in\N^*$,
$$\E\left[\exp\left({\displaystyle -\int f(x)\Theta(dx)}\right)\right]=\E\left[\exp\left({\displaystyle -\int f(x)\Theta_n(dx)}\right)\right]. $$
Then, we are looking for the asymptotics of the right-hand side above in order to describe the distribution of $\Theta$. Remark that particles $x$ of $\Theta$ such that $||x||\geq M\sqrt{n}$ can be neglected. That is why,
$$\E\left[\exp\left({\displaystyle -\int f(x)\Theta_n(dx)}\right)\right]\approx\E\left[\displaystyle\prod\limits_{x\in\Theta\cap B(0,M\sqrt{n})}\E_{\mathbf{P}}\left[\exp\left(-\sum\limits_{|u|=n}f(S_u+x)\right)\right]\right].$$
However, uniformly in $x\in B(0,M\sqrt{n})$,
\begin{align*}
\ln\left(\E_{\mathbf{P}}\left[\exp\left(-\sum\limits_{|u|=n}f(S_u+x)\right)\right]\right)\\
&\hspace{-5.5 cm}=\ln\left(1+\E_{\mathbf{P}}\left[\textbf{1}\{Z_n(A-x)\geq 1\}\left(\exp\left(-\sum\limits_{|u|=n}f(S_u+x)\right)-1\right)\right]\right)\\
&\hspace{-5.5 cm}\approx \E_{\mathbf{P}}\left[\textbf{1}\{Z_n(A-x)\geq 1\}\left(\exp\left(-\sum\limits_{|u|=n}f(S_u+x)\right)-1\right)\right]\\
&\hspace{-5.5 cm}\approx R_{f,A}\mathbf{P}(Z_n(A-x)\geq 1)
\end{align*}
where the last approximation stems from Lemma \ref{funlemma} and 
$$R_{f,A}=\E\left[\exp\left(-\int f(x)\mathcal{N}_A(dx)\right) \right]-1.$$
Finally, we get that for every $n\in\N^*$,
$$\E\left[\exp\left({\displaystyle -\int f(x)\Theta_n(dx)}\right)\right]\approx\E\left[\exp\left(R_{f,A}\int_{B(0,M\sqrt{n})}\mathbf{P}(Z_n(A-x)\geq 1)\Theta(dx)\right)\right].$$
In order to conclude the proof, we only need to find a subsequence along which $$L_n:=\int_{B(0,M\sqrt{n})}\mathbf{P}(Z_n(A-x)\geq 1)\Theta(dx)$$ converges in distribution. Therefore, it would be enough to show that $(L_n)_{n\in\N^*}$ is tight and use Prokhorov's theorem.
\subsection{Preliminary Lemmas}
In order to make the strategy above more rigorous, we need a few lemmas.
The proof of the following lemma is very similar to the proof of proposition 4.(b) in \cite{BCG}.
\begin{lem}\label{tightness}
Let $\Theta$ be a cluster-invariant point process. Let $A$ be a closed ball. Then, for every $n\in\N^*$,
$$\displaystyle\int\mathbf{P}(Z_n(A-x)\geq 1)\Theta(dx)$$
is finite almost surely. Moreover,
$$\left(\displaystyle\int\mathbf{P}(Z_n(A-x)\geq 1)\Theta(dx)\right)_{n\in\N}$$ is a tight sequence of random variables.
\end{lem}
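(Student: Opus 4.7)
The plan is to identify the random quantity
$$L_n := \int \mathbf{P}(Z_n(A-x)\geq 1)\,\Theta(dx)$$
with the conditional mean $\E[N_n(A)\mid\Theta]$, where
$$N_n(A) := \sum_{x\in\Theta}\mathbf{1}\{Z_n^{(x)}(A)\geq 1\}$$
and $Z_n^{(x)}$ denotes the $n$-th generation of the branching process issued from $x$ (these processes being independent across $x\in\Theta$ conditionally on $\Theta$). The pivotal deterministic bound $N_n(A)\leq \Theta_n(A)$, combined with cluster-invariance $\Theta_n\overset{\mathrm{law}}{=}\Theta$, controls $N_n(A)$ in distribution by $\Theta(A)$, which is a.s.\ finite since $\Theta$ is a locally finite point process.

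\textbf{Almost sure finiteness.} Conditionally on $\Theta$, $N_n(A)$ is a sum of independent Bernoulli variables with parameters $p_x = \mathbf{P}(Z_n(A-x)\geq 1)$, so $L_n = \sum_{x\in\Theta} p_x$. On the event $\{L_n=\infty\}$, the second Borel--Cantelli lemma applied under $\P(\,\cdot\mid\Theta)$ forces $N_n(A)=\infty$ a.s.\ conditionally, which contradicts $N_n(A)\leq \Theta_n(A)<\infty$. Hence $L_n<\infty$ a.s.

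\textbf{Tightness.} Conditionally on $\Theta$,
$$\mathrm{Var}(N_n(A)\mid\Theta) = \sum_{x\in\Theta} p_x(1-p_x) \leq L_n,$$
so $\E[N_n(A)^2\mid\Theta]\leq L_n + L_n^2$. On the event $\{L_n\geq 1\}$ the Paley--Zygmund inequality gives
$$\P\!\left(N_n(A)\geq \tfrac{L_n}{2}\,\Big|\,\Theta\right) \geq \frac{L_n^2/4}{L_n+L_n^2}\geq \frac{1}{8}.$$
Integrating over $\Theta$ and using $N_n(A)\leq\Theta_n(A)\overset{\mathrm{law}}{=}\Theta(A)$, I obtain, for every $L\geq 1$,
$$\P(L_n\geq L) \leq 8\,\P(N_n(A)\geq L/2) \leq 8\,\P(\Theta(A)\geq L/2).$$
The right-hand side is independent of $n$ and goes to $0$ as $L\to\infty$, which proves tightness.

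\textbf{Main obstacle.} The delicate ingredient is the conditional independence structure: one must construct the family $(Z_n^{(x)})_{x\in\Theta}$ on an enlarged probability space so that it is genuinely independent given $\Theta$, and then work under a regular conditional distribution $\P(\,\cdot\mid\Theta)$ to justify both Borel--Cantelli and Paley--Zygmund. Once this measurability groundwork is laid, the remainder is a routine second-moment computation combined with the cluster-invariance identity $\Theta_n\overset{\mathrm{law}}{=}\Theta$.
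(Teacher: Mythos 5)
Your proof is correct, and it takes a genuinely different route from the one in the paper. The paper works with Laplace transforms: starting from the cluster-invariance identity $\E[\exp(-t\Theta(A))]=\E[\exp(-t\Theta_n(A))]$ and applying $\ln(1+s)\leq s$ inside the product over $x\in\Theta$, one obtains $\E[\exp(-t\Theta(A))]\leq\E[\exp((e^{-t}-1)W_n)]$ with $W_n=\int\mathbf{P}(Z_n(A-x)\geq 1)\Theta(dx)$; from this a one-line inequality $\P(W_n\geq L)\leq(1-\E[\exp(-t\Theta(A))])/(1-\exp((e^{-t}-1)L))$ delivers both a.s.\ finiteness (let $L\to\infty$, $t\to 0$) and tightness. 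You instead identify $W_n$ as the conditional mean of the ancestor count $N_n(A)=\sum_{x\in\Theta}\mathbf{1}\{Z_n^{(x)}(A)\geq 1\}$, exploit the deterministic bound $N_n(A)\leq\Theta_n(A)$, and control the fluctuations of the conditionally Bernoulli sum: second Borel--Cantelli under $\P(\cdot\mid\Theta)$ for a.s.\ finiteness, and Paley--Zygmund (giving $\P(N_n(A)\geq L_n/2\mid\Theta)\geq 1/8$ on $\{L_n\geq 1\}$) for tightness, concluding $\P(L_n\geq L)\leq 8\,\P(\Theta(A)\geq L/2)$. Both proofs ultimately transfer finiteness of $\Theta(A)$ to $W_n$ through cluster-invariance, and both are clean; the paper's version avoids the conditional-independence bookkeeping and the second-moment calculation, whereas yours makes the probabilistic mechanism transparent by exhibiting exactly which random count is being averaged. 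The ``main obstacle'' you flag is not really one here, since the paper's construction of $\Theta_n$ in Section~1.1 already assumes the family of branching processes attached to the atoms of $\Theta$ is independent conditionally on $\Theta$; the regular conditional distribution is built into the model.
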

\begin{proof}
Let $t>0$. By cluster-invariance of $\Theta$, it holds that
\begin{align}
\E\left[ \exp(-t\Theta(A))\right]&=\E\left[ \exp(-t\Theta_n(A))\right].\label{egafini1}
\end{align}
By definition of $\Theta_n$, (\ref{egafini1}) yields
\begin{align}
\E\left[ \exp(-t\Theta(A))\right]&=\E\left[\prod\limits_{x\in\Theta}\E_{\mathbf{P}}\left[\exp\left(-tZ_n(A-x)\right) \right] \right]\nonumber\\
&=\E\left[\exp\left(\int\ln\bigg(\E_{\mathbf{P}}\left[\exp\left(-tZ_n(A-x) \right) \right] \bigg) \Theta(dx) \right) \right].\label{egafini2}
\end{align}
However, using inequality $\ln(1+t)\leq t$ for every $t>-1$, one gets that for any $x\in\R^d$
\begin{align}
\ln\bigg(\E_{\mathbf{P}}\left[\exp\left(-tZ_n(A-x) \right) \right]\bigg)&
=\ln\bigg(1+\E_{\mathbf{P}}\left[\textbf{1}\{Z_n(A-x)\geq 1\}\bigg(\exp\left(-tZ_n(A-x) \right)-1\bigg) \right]\bigg)\nonumber\\
&\leq\E_{\mathbf{P}}\left[\textbf{1}\{Z_n(A-x)\geq 1\}\bigg(\exp\left(-tZ_n(A-x)\right)-1\bigg) \right]\nonumber\\
&\leq(e^{-t} -1)\mathbf{P}\left(Z_n(A-x)\geq 1 \right).\label{egafini3}
\end{align}
Combining (\ref{egafini2}) and (\ref{egafini3}), one gets that
\begin{align}
\E\left[ \exp(-t\Theta(A))\right]&\leq \E\left[ \exp\left((e^{-t}-1) \int\mathbf{P}\left(Z_n(A-x)\geq 1 \right) \Theta(dx)\right) \right]\nonumber\\
&=\E\left[ \exp\left((e^{-t}-1) W_n\right) \right]\label{egafini4}
\end{align}
where $$W_n:=\displaystyle\int \mathbf{P}(Z_n(A-x)\geq 1)\Theta(dx).$$
From (\ref{egafini4}), we deduce that for every $L>0$,
\begin{align}
\E\left[ \exp(-t\Theta(A))\right]&\leq 1-\P(W_n\geq L)+\P(W_n\geq L)\exp( (e^{-t}-1)L).\label{egafini5}
\end{align}
Therefore, for every $L>0$,
\begin{align}
\P\left(W_n\geq L\right)\leq \frac{1-\E\left[\exp(-t\Theta(A)) \right]}{1-\exp((e^{-t}-1)L)}.\label{egafini6}
\end{align}
By letting $L$ go toward infinity and $t$ go to 0 in (\ref{egafini6}), we get that
$$\P\left(W_n=+\infty\right)=0.$$
Moreover inequality (\ref{egafini6}) is clearly sufficient to prove tightness.
\end{proof}
\begin{lem} \label{recu}
Let $A$ be a closed ball. We assume hypotheses $\mathcal{H}_1$ or $\mathcal{H}_2$.
Then there exist non-negative constants $\kappa$ and $\lambda$ such that for every $M\geq1$, there exists an integer $N_M$ such that for every $n\geq N_M$ and for every $x\in B(0,M\sqrt{n})^c$,\\
$$\mathbf{P}(Z_n(A-x)\geq 1)\leq \kappa e^{-M^2/\kappa}\mathbf{P}(Z_{\lambda n}(A-x)\geq 1) .$$
\end{lem}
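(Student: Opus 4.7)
The strategy is to sandwich both sides of the desired inequality between random-walk heat-kernel estimates. Specifically, I bound $\mathbf{P}(Z_n(A-x)\geq 1)$ from above by a first-moment argument, and $\mathbf{P}(Z_{\lambda n}(A-x)\geq 1)$ from below by a Paley--Zygmund inequality. The ratio then reduces to a comparison of random-walk heat kernels at times $n$ and $\lambda n$; since the Gaussian exponents of these differ by a factor $1-1/\lambda$, the desired bound $\kappa e^{-M^{2}/\kappa}$ appears as soon as $\lambda$ is taken large enough.

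For the upper bound, many-to-one and Markov give, with $(\hat S_k)_k$ a random walk of motion $\mathcal{P}$,
\[
\mathbf{P}(Z_n(A-x)\geq 1)\;\leq\;\E_{\mathbf{P}}[Z_n(A-x)]\;=\;\P(\hat S_n\in A-x)\;\leq\;C|A|\,n^{-d/2}\exp\!\bigl(-\|x\|^{2}/(Cn)\bigr),
\]
by the upper half of Proposition \ref{heat}, once $\|x\|$ is large enough compared to $\mathrm{diam}(A)$; this forces $M\sqrt{N_M}\gtrsim\mathrm{diam}(A)$ and explains the $M$-dependence of $N_M$. For the lower bound, Paley--Zygmund yields $\mathbf{P}(Z_{\lambda n}(A-x)\geq 1)\geq\E_{\mathbf{P}}[Z_{\lambda n}(A-x)]^{2}/\E_{\mathbf{P}}[Z_{\lambda n}(A-x)^{2}]$, and the standard many-to-two formula (classifying pairs of particles at generation $\lambda n$ by the generation $k$ of their most recent common ancestor, each split contributing a factor $\E[N(N-1)]<\infty$) gives
\[
\E_{\mathbf{P}}[Z_{\lambda n}(A-x)^{2}]\;=\;\P(\hat S_{\lambda n}\in A-x)+\E[N(N-1)]\sum_{k=0}^{\lambda n-1}\E\!\Bigl[\P(\hat S'_{\lambda n-k}\in A-x-\hat S_k)^{2}\Bigr].
\]
Bounding one factor inside the square by $C|A|(\lambda n-k)^{-d/2}$ (Proposition \ref{heat} again) and recombining the other after integration in $\hat S_k$, the $k$-th summand is at most $C|A|(\lambda n-k)^{-d/2}\P(\hat S_{\lambda n}\in A-x)$. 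Since $d\geq 3$, $\sum_{k=0}^{\lambda n-1}(\lambda n-k)^{-d/2}\leq\sum_{j\geq 1}j^{-d/2}<+\infty$, so $\E_{\mathbf{P}}[Z_{\lambda n}(A-x)^{2}]\leq c_1\P(\hat S_{\lambda n}\in A-x)$ and hence
\[
\mathbf{P}(Z_{\lambda n}(A-x)\geq 1)\;\geq\;c_2\,\P(\hat S_{\lambda n}\in A-x).
\]

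Combining these two bounds and invoking the lower half of Proposition \ref{heat} (valid for $\|x\|\leq\tau\lambda n$ under $\mathcal{H}_2$, and globally under $\mathcal{H}_1$), the ratio is at most
\[
c_3\,\lambda^{d/2}\exp\!\left(-\frac{\|x\|^{2}}{n}\Bigl(\frac{1}{C}-\frac{c_4}{\lambda}\Bigr)\right).
\]
Choosing $\lambda$ large enough that the exponent coefficient is at least $1/(2C)$, and (under $\mathcal{H}_2$) also that $\tau\lambda\geq R$ where $R$ is the support radius of $\mathcal{P}$---so that $\|x\|>\tau\lambda n$ forces $\P(\hat S_n\in A-x)=0$ and the inequality is trivial in that regime---one obtains the desired bound with a universal $\kappa$ depending only on $\lambda$, $C$, and the heat-kernel constants.

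The main technical hurdle is the second-moment estimate: the hypothesis $d\geq 3$ (built into $\mathcal{H}_1$ and $\mathcal{H}_2$) is precisely what makes the series $\sum_k(\lambda n-k)^{-d/2}$ convergent. A secondary subtlety is the ``very far'' regime under $\mathcal{H}_2$, handled by taking $\lambda$ larger than $R/\tau$ so that the Gaussian heat-kernel lower bound of Proposition \ref{heat} covers the entire support of $\hat S_n$.
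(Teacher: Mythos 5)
Your proof is correct, but it takes a genuinely different route for the lower bound $\mathbf{P}(Z_{\lambda n}(A-x)\geq 1)\gtrsim\P(\hat S_{\lambda n}\in A-x)$. The paper derives the two-sided sandwich $C_{A,d,\sigma}^{-1}\mathbf{Q}^*(S_{w_n}\in A-x)\leq\mathbf{P}(Z_n(A-x)\geq 1)\leq\mathbf{Q}^*(S_{w_n}\in A-x)$ directly from the spine representation already established in the proof of Lemma \ref{funlemma}: it writes $\mathbf{P}(Z_n(A-x)\geq 1)$ as $\E_{\mathbf{Q}^*}\bigl[\mathbf{1}\{S_{w_n}\in A-x\}(1+\sum_k Y_k)^{-1}\bigr]$, uses independence of the spine from the $Y_k$'s, and bounds the conditional factor between $1$ (trivially) and $C_{A,d,\sigma}^{-1}$ via Jensen applied to $t\mapsto(1+t)^{-1}$, with convergence of $\sum_k k^{-d/2}$ supplying the uniform constant. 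You instead bound from above by the first moment and from below by Paley--Zygmund together with the many-to-two (second-moment) formula, where again $d\geq 3$ makes $\sum_j j^{-d/2}$ converge. The two routes use the same amount of moment information on $\mu$ (the paper's $\E_{\mathbf{Q}^*}|B(w_{k+1})|$ is $\sigma^2$; your many-to-two needs $\E[N(N-1)]$), and both reduce the lemma to the same heat-kernel comparison at times $n$ and $\lambda n$, handling the finite-range regime under $\mathcal{H}_2$ identically. The paper's approach has the advantage of reusing the spine machinery already in hand and of aligning with the generalization to $\mathcal{H}_3$ (Lemma \ref{modifirecu} replaces Jensen for $t\mapsto(1+t)^{-1}$ by a fractional-moment bound); your approach is more self-contained and elementary but would need a separate adaptation for the heavy-tail case, since the many-to-two formula as used requires a finite second moment of the offspring law.
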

\begin{proof}
Let $(b_1,b_2,\cdots )$ be an infinite sequence of real numbers. Let $n\in\N^*$.
Obviously,
\begin{align}\label{egalem1}
\E_{\mathbf{Q}^*}\left[\left({1+\sum\limits_{k=1}^{n}{Y}_k(b_k)}\right)^{-1} \right]\leq 1
\end{align}
where ${Y}_k$ refers to the notation given in the proof of Lemma \ref{funlemma}.
Moreover, by Jensen's inequality,
$$\displaystyle\E_{\mathbf{Q}^*}\left[\left({1+\sum\limits_{k=1}^{n}{Y}_k(b_k)}\right)^{-1} \right]\geq \left({\E_{\mathbf{Q}^*}\left[1+\sum\limits_{k=1}^{n}{Y}_k(b_k)\right]}\right)^{-1}.$$\\
However, in a similar manner as in the proof of identity (\ref{ega1qua}),  it holds that
$$\E_{\mathbf{Q}^*}\left[\sum\limits_{k=1}^{n}{Y}_k(b_k)\right]\leq |A|c_d \sigma^2\sum\limits_{k=1}^{+\infty}\frac{1}{k^{d/2}}:=C_{A,d,\sigma}<+\infty.$$\\
Therefore, we get that for every sequence $(b_1,b_2,\cdots )$ and for every $n\in\N^*$,  \\
\begin{align}\label{egalem2}
\E_{\mathbf{Q}^*}\left[\left({1+\sum\limits_{k=1}^{n}{Y}_k(b_k)}\right)^{-1} \right]\geq \frac{1}{C_{A,d,\sigma}}.
\end{align}
However we can recall from the proof of Lemma \ref{funlemma} that for every $n\in\N^*$ and for any $x\in B(0,M\sqrt{n})^c$,\\
$$\begin{array}{ll}
\mathbf{P}(Z_n(A-x)\geq 1)&=\E_{\mathbf{Q}^*}\left[\frac{\textbf{1}\{S_{w_n}\in(A-x)\}}{1+\sum\limits_{k=1}^{n}Y_k(S_{w_{n-k}}+x)} \right]\\

\end{array}$$
where the spine $(S_{w_k})_{k\in\N^*}$ is a branching random walk with motion law $\mathcal{P}$. Moreover, by definition of $\mathbf{Q}^*$, $(S_{w_k})_{k\in\N^*}$ is independent of the random variables $(Y_k(\cdot))_{k\in\N^*}$ under $\mathbf{Q}^*$. Therefore, for every $n\in\N^*$ and for every $x\in B(0,M\sqrt{n})^c$,
\begin{align}
\mathbf{P}(Z_n(A-x)\geq 1)&=\E_{\mathbf{Q}^*}\left[\textbf{1}\{S_{w_n}\in(A-x)\}\E_{\mathbf{Q}^*}\left[\left({1+\sum\limits_{k=1}^{n}Y_k(b_k)}\right)^{-1} \right]_{b_k=S_{w_{n-k}}+x}\right]. \label{condi}
\end{align}
Combining identities (\ref{egalem1}), (\ref{egalem2}) and (\ref{condi}) yields
\begin{align}
C_{A,d,\sigma}^{-1}\mathbf{Q}^*\left(S_{w_n}\in A-x \right)\leq \mathbf{P}(Z_n(A-x)\geq 1)\leq \mathbf{Q}^*\left(S_{w_n}\in A-x \right).\label{branchtonorm}
\end{align}
Consequently, under the hypothesis $\mathcal{H}_1$, for every $n\in\N^*$ and for every $x\in B(0,M\sqrt{n})^c$,\\
$$\displaystyle\frac{\mathbf{P}\left( Z_n(A-x)\geq 1\right)}{\mathbf{P}\left( Z_{2n}(A-x)\geq 1\right)}\leq \frac{\displaystyle C_{A,d,\sigma}\int_Ae^{-||y-x||^2/(2n)}dy}{\displaystyle \int_Ae^{-||y-x||^2/(4n)}dy}.$$\\
Therefore, under the hypothesis $\mathcal{H}_1$, there exists a positive constant $C'_{A,d,\sigma}$ such that for every $n\in\N^*$ and for every $x\in B(0,M\sqrt{n})^c$,
\begin{align}
\displaystyle\frac{\mathbf{P}\left( Z_n(A-x)\geq 1\right)}{\mathbf{P}\left( Z_{2n}(A-x)\geq 1\right)}&\leq C'_{A,d,\sigma}e^{-||x||^2/(4n)}\nonumber\\
&\leq C'_{A,d,\sigma}e^{-M^2/4}.\label{casH1}
\end{align}
Thus, (\ref{casH1}) proves Lemma \ref{recu} under the hypothesis $\mathcal{H}_1$. Under the hypothesis $\mathcal{H}_2$, we will use Proposition \ref{heat} to give a similar proof. Let us use constants $\tau$, $C_1$ and $C_2$ introduced in Proposition \ref{heat}. Moreover, under hypothesis $\mathcal{H}_2$, $\mathcal{P}$ has a finite range. Therefore there exists a constant $C_3$ such that $\mathbf{P}\left( Z_n(A-x)\geq 1\right)=0$ for every $n\in\N^*$ and for every $x\in B(0, C_3n)^c$. Now let us define $r=\max(C_2 C_3/(\tau C_1),2)$. Let $n$ be an integer larger than $(M/C_3)^{2}$ and let $x$ be such that $M\sqrt{n}< ||x||\leq C_3n$. Then, we get
\begin{align}
\mathbf{P}(Z_n(A-x)\geq 1)&\leq\sum\limits_{y\in A\cap\mathbb{Z}^d} \mathbf{Q}^*\left( S_{w_n}=y-x\right)\nonumber\\
&\leq C_1 n^{-d/2}\sum\limits_{y\in A\cap\mathbb{Z}^d}\exp\left( -\frac{||x-y||^2}{C_1n}\right)
\end{align}
where we used identity (\ref{branchtonorm}) in the first inequality and Proposition \ref{heat} in the second one.
Then, as $||x||\geq M\sqrt{n}$, we get
\begin{align}
\mathbf{P}(Z_n(A-x)\geq 1)&\leq  C\times n^{-d/2}\exp\left(-\frac{(r-1)}{C_1r}M^2 \right)\sum\limits_{y\in A\cap\mathbb{Z}^d}\exp\left(-\frac{||x-y||^2}{C_2( rC_1C_2^{-1}n) }\right)
\end{align}
where $C$ depends only on $A$ and $C_1$.
However, by definition of $r$, $||x||\leq C_3 n\leq \tau r C_1 C_2^{-1} n$. Therefore, by the lower bound in Proposition \ref{heat}, there exists $C'>0$ such that for every $x$ such that $M\sqrt{n}< ||x||\leq C_3n$,
\begin{align}
\mathbf{P}(Z_n(A-x)\geq 1)&\leq C'\times \exp\left(-\frac{(r-1)}{C_1r}M^2 \right)\sum\limits_{y\in A\cap\mathbb{Z}^d} \mathbf{Q}^*\left( S_{w_{r C_1 C_2^{-1} n}}=y-x\right)\nonumber\\
&\leq C'\times C_{A,d,\sigma}\times \exp\left(-\frac{(r-1)}{C_1r}M^2 \right) \mathbf{P}\left(Z_{r C_1 C_2^{-1} n}(A-x)\geq 1 \right) \label{fin}
\end{align}
where  we used identity (\ref{branchtonorm}) in the second inequality. If $||x||\geq C_3n$, then the left-hand side in (\ref{fin}) is zero. Thus (\ref{fin}) is also true in this case. This concludes the proof.
\end{proof}
\begin{lem}
\label{majoc1}
Let $A$ be a closed ball. Let us assume hypotheses $\mathcal{H}_1$ or $\mathcal{H}_2$. Let $\Theta$ be a cluster-invariant point process. Then,
$$
\displaystyle\underset{M\rightarrow+\infty}\lim \hspace{0.2 cm} \underset{n\rightarrow+\infty}\limsup \hspace{0.2 cm}\P\left(\int_{B(0,M\sqrt{n})^c}\mathbf{P}(Z_n(A-x)\geq 1)\Theta(dx)\geq e^{-M^2/(2\kappa)} \right)=0.$$

\end{lem}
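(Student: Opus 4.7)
The plan is to combine Lemma \ref{recu} with the tightness statement of Lemma \ref{tightness}. The starting observation is that, by Lemma \ref{recu}, there exist constants $\kappa,\lambda>0$ such that for $M\geq 1$ and $n\geq N_M$, every point $x$ outside $B(0,M\sqrt{n})$ satisfies
$$\mathbf{P}(Z_n(A-x)\geq 1)\leq \kappa e^{-M^2/\kappa}\,\mathbf{P}(Z_{\lambda n}(A-x)\geq 1).$$
Integrating this inequality against $\Theta(dx)$ on $B(0,M\sqrt{n})^c$ and then bounding the resulting integral by an integral over all of $\R^d$ (or $\mathbb{Z}^d$), I get
$$\int_{B(0,M\sqrt{n})^c}\mathbf{P}(Z_n(A-x)\geq 1)\,\Theta(dx)\ \leq\ \kappa e^{-M^2/\kappa}\,W_{\lambda n},$$
where $W_m:=\int \mathbf{P}(Z_m(A-x)\geq 1)\,\Theta(dx)$ is the random variable studied in Lemma \ref{tightness}.

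From this deterministic bound, the probability under consideration is at most
$$\P\!\left(\kappa e^{-M^2/\kappa}\,W_{\lambda n}\geq e^{-M^2/(2\kappa)}\right)\ =\ \P\!\left(W_{\lambda n}\geq \kappa^{-1}\,e^{M^2/(2\kappa)}\right).$$
Since $\kappa^{-1}e^{M^2/(2\kappa)}\to+\infty$ as $M\to+\infty$, and since Lemma \ref{tightness} asserts that the family $(W_m)_{m\in\N}$ is tight, taking first the $\limsup$ in $n$ and then the limit in $M$ yields $0$. This is exactly the claim.

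The only subtle point is that Lemma \ref{recu} requires $n\geq N_M$, so strictly speaking, the inequality used to bound the integral is valid only for $n$ large enough depending on $M$; but this is harmless because I am taking $\limsup_{n\to\infty}$ with $M$ fixed before sending $M\to\infty$. No further ingredient is needed: the work has really been done in Lemmas \ref{recu} and \ref{tightness}, and the present lemma is just the quantitative combination of a Gaussian-type tail estimate for the transition probability and the uniform-in-$n$ control on $W_n$ coming from cluster-invariance.
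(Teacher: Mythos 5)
Your proposal is correct and is essentially the same argument the paper gives: apply Lemma \ref{recu} to replace the restricted integrand $\mathbf{P}(Z_n(A-x)\geq 1)$ by $\kappa e^{-M^2/\kappa}\mathbf{P}(Z_{\lambda n}(A-x)\geq 1)$ for $n\geq N_M$, drop the restriction to $B(0,M\sqrt{n})^c$ to get the bound by $\kappa e^{-M^2/\kappa}W_{\lambda n}$, and then invoke the tightness of $(W_m)_m$ from Lemma \ref{tightness} together with $\kappa^{-1}e^{M^2/(2\kappa)}\to\infty$. The remark about the harmlessness of the $n\geq N_M$ restriction is a valid clarification of the same point the paper handles implicitly.
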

\begin{proof}
By Lemma \ref{recu}, for every $M\geq 1$ and for every $n\geq N_M$,\\
$$\begin{array}{l}
\displaystyle\P\left(\int_{ B(0,M\sqrt{n})^c}\mathbf{P}(Z_n(A-x)\geq 1))\Theta(dx)\geq e^{-M^2/(2\kappa)} \right)\\ \hspace{3 cm}\leq \displaystyle\P\left(\int_{ B(0,M\sqrt{n})^c}\mathbf{P}(Z_{\lambda n}(A-x)\geq 1)\Theta(dx)\geq \kappa^{-1}e^{M^2/(2\kappa)} \right)\\
\hspace{3 cm}\leq \displaystyle\P\left(\int\mathbf{P}(Z_{\lambda n}(A-x)\geq 1)\Theta(dx)\geq \kappa^{-1}e^{M^2/(2\kappa)} \right).
\end{array}$$
However,  by Lemma \ref{tightness},
$$\left(\int\mathbf{P}(Z_{\lambda n}(A-x)\geq 1)\Theta(dx)\right)_{n\in\N} $$
is tight. This concludes the proof.
\end{proof}

\subsection{Proof of Theorem \ref{charac}}
\begin{proof}[Proof of Theorem \ref{charac}]
Let $A$ be a closed ball.
Let $f\in\mathcal{F}_c^+(\R^d)$ such that $supp(f)\subset A$. Let $M>0$.
By cluster-invariance of $\Theta$, for any $n\in\N^*$,\\
\begin{align} \label{egah0}
\E\left[\exp\left({\displaystyle -\int f(x)\Theta(dx)}\right)\right]&=\E\left[\exp\left({\displaystyle -\int f(x)\Theta_n(dx)}\right)\right]\nonumber\\
&=\E\left[\displaystyle\prod\limits_{x\in\Theta}\E_{\mathbf{P}}\left[\exp\left(-\sum\limits_{|u|=n}f(S_u+x)\right)\right] \right] \nonumber\\
&=\E\left[\displaystyle\prod\limits_{x\in\Theta\cap B(0,M\sqrt{n})}\E_{\mathbf{P}}\left[\exp\left(-\sum\limits_{|u|=n}f(S_u+x)\right)\right]\right.\nonumber\\
&\hspace{0.4 cm}\times\left.\displaystyle\prod\limits_{x\in\Theta\cap B(0,M\sqrt{n})^c}\E_{\mathbf{P}}\left[\exp\left(-\sum\limits_{|u|=n}f(S_u+x)\right)\right]\right].
\end{align}\\
First, let us look at the random variable
\begin{align}J_{n,M}:=\displaystyle\int_{ B(0,M\sqrt{n})^c}\ln\left(\E_{\mathbf{P}}\left[\exp\left(-\sum\limits_{|u|=n}f(S_u+x)\right)\right] \right) \Theta(dx). \label{quant1}
\end{align}
As $supp(f)\subset A$,
\begin{align}
J_{n,M}=\displaystyle\int_{ B(0,M\sqrt{n})^c}\ln\left(1+\E_{\mathbf{P}}\left[\textbf{1}\{Z_n(A-x)\geq 1\}\left(\exp\left(-\sum\limits_{|u|=n}f(S_u+x)\right)-1\right)\right] \right)\Theta(dx).\label{quant2}
\end{align}
Moreover, we remark that for every $x\in\R^d$,
$$\begin{array}{ll}\Bigg|\E_{\mathbf{P}}\left[\textbf{1}\{Z_n(A-x)\geq 1\}\left(\exp\left(-\sum\limits_{|u|=n}f(S_u+x)\right)-1\right)\right]\Bigg|&\leq \mathbf{P}\left(Z_n(A-x)\geq 1 \right) \\ 
&\leq \E_{\mathbf{P}}\left[Z_n(A-x) \right] \vspace{0.2 cm}\\ 
&=\P(\hat{S}_n\in A-x)
\end{array}$$
where $(\hat{S}_n)_{n\in\N}$ is a random walk with motion law $\mathcal{P}$. Together with Lemma \ref{inegastand}, this implies:
\begin{align}
\underset{x\in\R^d}\sup\Bigg|\E_{\mathbf{P}}\left[\textbf{1}\{Z_n(A-x)\geq 1\}\left(\exp\left(-\sum\limits_{|u|=n}f(S_u+x)\right)-1\right)\right]\Bigg|\leq c_d\frac{|A|}{n^{d/2}}\xrightarrow[n\rightarrow +\infty]{} 0.\label{borneunif}
\end{align}
Therefore, as $|\ln(1+t)|\leq 2|t|$ for every $t$ small enough, the combination of identities (\ref{quant2}) and (\ref{borneunif}) implies that there exists an integer $\tilde{N}$ such that for every $n\geq \tilde{N}$,\\
$$|J_{n,M}|\leq 2\int_{ B(0,M\sqrt{n})^c}\mathbf{P}(Z_n(A-x)\geq 1)\Theta(dx)$$
almost surely. Therefore, using Lemma \ref{majoc1},
\begin{align}
\P\left(|J_{n,M}|\geq 2e^{-M^2/(2\kappa)}\right)
&= o_{n,M}(1)\label{egah1}
\end{align}
where $o_{n,M}(1)$ is a function $\left(\varepsilon_{n,M} \right)_{n\in\N^*,M\in\R_+^*}$ such that $\underset{M\rightarrow +\infty}\lim \underset{n\rightarrow +\infty}\limsup \hspace{0.2 cm}\varepsilon_{n,M}=0$.
Furthermore, by (\ref{egah0}), for every $n\geq \tilde{N}$,
\begin{align}
\E\left[\exp\left({\displaystyle -\int f(x)\Theta(dx)}\right)\right] \nonumber\\
&\hspace{-4 cm}=\E\left[\displaystyle\prod\limits_{x\in\Theta\cap B(0,M\sqrt{n})}\E_{\mathbf{P}}\left[\exp\left(-\sum\limits_{|u|=n}f(S_u+x)\right)\right]\times \exp(J_{n,M})\right]\nonumber\\
&\hspace{-4 cm}=\E\left[\displaystyle\prod\limits_{x\in\Theta\cap B(0,M\sqrt{n})}\E_{\mathbf{P}}\left[\exp\left(-\sum\limits_{|u|=n}f(S_u+x)\right)\right]\right]\nonumber\\
&\hspace{-3.6 cm}+\E\left[\displaystyle\prod\limits_{x\in\Theta\cap B(0,M\sqrt{n})}\E_{\mathbf{P}}\left[\exp\left(-\sum\limits_{|u|=n}f(S_u+x)\right)\right]\times(\exp(J_{n,M})-1)\right].\label{egah2}
\end{align}
However, 
\begin{align}
\Bigg|\E\left[\displaystyle\prod\limits_{x\in\Theta\cap B(0,M\sqrt{n})}\E_{\mathbf{P}}\left[\exp\left(-\sum\limits_{|u|=n}f(S_u+x)\right)\right]\times(\exp(J_{n,M})-1)\right]\Bigg|\nonumber
\end{align}
is lower than,
\begin{align}
\E\left[|\exp(J_{n,M})-1| \right]&=\E\left[|\exp(J_{n,M})-1|\textbf{1}\{|J_{n,M}|<2e^{-M^2/(2\kappa)}\} \right]\nonumber\\
&\hspace{0.4 cm}+\E\left[|\exp(J_{n,M})-1|\textbf{1}\{|J_{n,M}|\geq2e^{-M^2/(2\kappa)}\} \right]\nonumber\\
&\leq o_{n,M}(1)+\P(|J_{n,M}|\geq2e^{-M^2/(2\kappa)})\nonumber\\
&=o_{n,M}(1)\label{majoJ}
\end{align}
where the second inequality comes from the fact that $J_{n,M}$ is non-positive and the last equality comes from identity (\ref{egah1}). Consequently, combining (\ref{egah2}) and (\ref{majoJ}), it holds that, 
\begin{align}
\E\left[\exp\left({\displaystyle -\int f(x)\Theta(dx)}\right)\right]&=\E\left[\displaystyle\prod\limits_{x\in\Theta\cap B(0,M\sqrt{n})}\E_{\mathbf{P}}\left[\exp\left(-\sum\limits_{|u|=n}f(S_u+x)\right)\right]\right]+o_{n,M}(1).\label{egah3}
\end{align}
Furthermore, as $supp(f)\subset A$,
\begin{align}\label{egah4}
\hspace{-0.3 cm}\E\left[\displaystyle\prod\limits_{x\in\Theta\cap B(0,M\sqrt{n})}\E_{\mathbf{P}}\left[\exp\left(-\sum\limits_{|u|=n}f(S_u+x)\right)\right]\right]=\E\left[\exp\left(\int_{ B(0,M\sqrt{n})} h_n(x)\Theta(dx)\right)\right]
\end{align}
where $h_n(x)=\ln\left(1+\E_{\mathbf{P}}\left[\textbf{1}\{ Z_n(A-x)\geq 1\}\left(\exp\left({\displaystyle -\sum\limits_{|u|=n}f(S_u+x)}\right)-1 \right) \right] \right)$.\\
Nevertheless, by Lemma \ref{funlemma}, uniformly in $x\in B(0,M\sqrt{n})$,
\begin{align}
h_n(x)=(1+o_n(1))R_{f,A}\mathbf{P}(Z_n(A-x)\geq 1)\label{egah5}
\end{align}
where we recall that $$\displaystyle R_{f,A}=\E\left[\exp\left(-\int f(x)\mathcal{N}_A(dx) \right)\right]-1=\frac{I_{A,-f}}{I_A}-1.$$ Consequently, using (\ref{egah5}) in (\ref{egah4}) yields
\begin{align}
\E\left[\displaystyle\prod\limits_{x\in\Theta\cap B(0,M\sqrt{n})}\E_{\mathbf{P}}\left[\exp\left(-\sum\limits_{|u|=n}f(S_u+x)\right)\right]\right]\nonumber\\
&\hspace{-7 cm}=\E\left[\exp\left((1+o_n(1))R_{f,A}\int_{ B(0,M\sqrt{n})}\mathbf{P}(Z_n(A-x)\geq 1) \Theta(dx)\right)\right]. \label{egah6}
\end{align}
Moreover, by Lemma \ref{tightness}, we know that 
$$\int_{ B(0,M\sqrt{n})}\mathbf{P}(Z_n(A-x)\geq 1)\Theta(dx)$$ 
is tight.
Therefore, by Prokhorov's Theorem (see for example \cite{Billingsley_cv}), there exists a subsequence $r_n=r_n(M,A)$ such that $$\int_{ B(0,M\sqrt{r_n})}\mathbf{P}(Z_{r_n}(A-x)\geq 1)\Theta(dx)$$ converges in law toward some random variable $Y_{M,A}$.
Combining this with (\ref{egah3}), (\ref{egah6}) and letting $n$ go toward $+\infty$, we deduce that\\
\begin{align}
\E\left[\exp\left({\displaystyle -\int f(x)\Theta(dx)}\right)\right] &=\E\left[\exp\left(Y_{M,A}R_{f,A}\right)\right]+o_M(1). \label{egah7}
\end{align}
Besides, thanks to Lemma \ref{tightness}, the tightness of $(Y_{M,A})_{M\in\R_+^*}$ is easily checked. Thus a subsequence of $(Y_{M,A})_{M\in\N^*}$ converges in law toward some finite, non-negative random variable $Y_A$. Letting $M$ go to infinity in (\ref{egah7}), we deduce that for every $f\in\mathcal{F}_c^+(\R^d)$ such that $supp(f)\subset A$,\\
\begin{align}\label{egah9}\E\left[\exp\left({- \displaystyle\int f(x)\Theta(dx)}\right)\right]&=\E\left[\exp( Y_AR_{f,A}) \right]\nonumber\\
&=\E\left[\exp( X_AI_AR_{f,A}) \right]
\end{align}
with $X_A:=Y_A\times I_A^{-1}$ where $I_A$ is defined in definition \ref{defiprat}. Then, we can compare the expression of the Laplace transform given in Proposition \ref{laplace} and the expression given by identity (\ref{egah9}) and we get that for every $f\in\mathcal{F}_c^+(\R^d)$ such that $supp(f)\subset A$:
\begin{align}
\E\left[\exp\left({- \displaystyle\int f(x)\Theta(dx)}\right)\right]=\E\left[\exp\left({-\displaystyle\int f(x)\Lambda^{d,X_A}(dx)}\right)\right].\nonumber
\end{align}
Consequently, by Lemma \ref{egaloi}, $\Theta(\cdot\cap A)$ has the same law as $\Lambda^{d,X_A}(\cdot\cap A)$. Thus, we almost proved Theorem \ref{charac}. The only problem is that we only proved our result in $A$. Of course we could consider larger and larger balls $A$ but we have to verify that $A\subset B$ implies $X_A\overset{law}=X_B$. 
Thus, let us consider two closed balls $A$ and $B$ such that $A\subset B$. Let $f\in\mathcal{F}_c^+(\R^d)$ such that $supp(f)\subset A$. Then we can equalize the two formulas for the Laplace transform given by (\ref{egah9}):
\begin{align}
\E\left[\exp( X_A I_A R_{f,A}) \right]=\E\left[\exp\left(- \displaystyle\int f(x)\Theta(dx)\right)\right]=\E\left[\exp( X_B I_BR_{f,B}) \right].\label{egah95}
\end{align}
As $supp(f)\subset A$, we know that
\begin{align}
R_{f,B}&=\E\left[\exp\left(-\int f(x)\mathcal{N}_B(dx)\right)-1 \right]\nonumber\\
&=\E\left[\textbf{1}\{\mathcal{N}_B(A)\geq 1 \}\left( \exp\left(-\int f(x)\mathcal{N}_B(dx)\right)-1\right)\right].\label{egah10}
\end{align}
Therefore, using (\ref{egah10}) and Lemmas \ref{quot} and \ref{lemi}, we obtain that
\begin{align}
\frac{I_B}{I_A}R_{f,B}&=\E\left[\exp\left(-\int f(x)\mathcal{N}_B(dx)\right)-1\Bigg| \mathcal{N}_B(A)\geq 1 \right]\nonumber\\
&=\E\left[\exp\left(-\int f(x)\mathcal{N}_A(dx)\right)\right]-1\nonumber\\
&= R_{f,A}.
\end{align}
Thus $I_AR_{f,A}=I_BR_{f,B}$. Together with (\ref{egah95}), for every $f\in\mathcal{F}_c^+(\R^d)$ such that $supp(f)\subset A$, this implies that
$$\E\left[\exp( X_A I_A R_{f,A}) \right]=\E\left[\exp( X_B I_AR_{f,A}) \right].$$
Moreover $$I_AR_{f,A}=I_A\times \left( \E\left[\exp\left(-\int f(x)\mathcal{N}_A(dx)\right)\right]-1 \right).$$
It is equal to $0$ for $f=0$ and $-I_AR_{f,A}$ can be made as close as we want from $I_A$ by choosing $f$ of larger and larger infinite norm. Consequently, for every $t\in [0, I_A[$, $$\E\left[ \exp(-tX_A)\right]=\E\left[ \exp(-tX_B)\right].$$
It is sufficient to say that $X_A$ and $X_B$ have the same distribution because $I_A>0$. It concludes the proof.
\end{proof}

\section{Heavy tail genereralization } \label{gene}
\subsection{New setting}
In this section, we prove Theorem \ref{general} which states that Theorems \ref{cv} and \ref{charac} remain true under the hypothesis $\mathcal{H}_3$. We recall that hypothesis $\mathcal{H}_3$ consists in the following assumptions:
The motion law $\mathcal{P}$ is given by spherically symmetric $\alpha$-stable laws with $\alpha\in]0,2[$. More precisely, for every $y\in\R^d$, $$\int\exp\left(i\langle y,x \rangle \right)\mathcal{P}(dx)=\exp\left(-\left(\sum\limits_{k=1}^d|y_k|^2\right)^{\alpha/2}\right).$$ The critical offspring law $\mu$ does not have second moment anymore. However we assume that there exists $\beta\in]0,1]$ such that for every $\gamma<\beta$, $\sum_{k=0}^{+\infty} k^{1+\gamma}\mu(k)<+\infty$. Moreover we assume that $d>\alpha/\beta$.

In this new setting, $\alpha\in ]0,2[$. Of course, $\alpha=2$ is the Brownian case which has been treated under the hypothesis $\mathcal{H}_1$. Assuming hypothesis $\mathcal{H}_3$, any particle is following a spherically symmetric $\alpha$-stable Lévy process until its death. One can refer to \cite{Sato} for more informations about general Lévy processes.

\begin{rem}
The typical example of an offspring law $\mu$ satisfying hypothesis $\mathcal{H}_3$ is given by $f(s)=s+\frac{1}{2}(1-s)^{1+\beta}$ with $\beta\in]0,1[$, where $f(s)=\sum\limits_{k=0}^{+\infty} \mu(k) s^k$ for every $s\in[0,1]$.
\end{rem}

Of course, it would be much too long to modify the entirety of our proofs. In this section, we just indicate to the reader how to change the key steps in the proofs of Theorem \ref{cv} and \ref{charac}. In the sequel, we often use a random variable $\chi$ such that for every $y\in\R^d$, $$\E\left[e^{i \langle y, \chi\rangle} \right]=\exp\left(-\left(\sum\limits_{k=1}^d|y_k|^2\right)^{\alpha/2}\right).$$
Let us call such a random variable $\chi$ "a standard  ($d$-dimensional) $\alpha$-stable law". We denote by $L_{\alpha}$ the density of $\chi$. Except for a few values of $\alpha$, $L_{\alpha}$ has not any closed form.\\

\subsection{Generalized proofs}
In order to prove generalized versions of Theorem \ref{cv} we have to prove a new version of Lemma \ref{funlemma}. Under the hypothesis $\mathcal{H}_3$, $I_A$ and $I_{A,f}$ are defined as in definition \ref{defiprat}.
\begin{lem}[ Key Lemma - generalized version] \label{modififunlemma}
We assume hypothesis $\mathcal{H}_3$.
Let $A$ be a closed ball. Let $f\in\mathcal{F}_c(\R^d)$ whose values are in $i\R$ or in $\R_-$ and such that $supp(f)\subset A$. 
Let $M>0$.
Uniformly in $x\in B(0,Mn^{1/\alpha})$, as $n$ goes to infinity,
$$\displaystyle\E_{\mathbf{P}}\left[\textbf{1}\{Z_n(A-x)\geq 1\}\exp\left(\sum\limits_{|u|=n} f(S_u+x) \right) \right]=(1+o_n(1)) \frac{L_{\alpha}(x/n^{1/\alpha})}{n^{d/\alpha}}I_{A,f} .$$
\end{lem}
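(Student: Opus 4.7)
The plan is to follow the proof of Lemma \ref{funlemma} line by line, making two substitutions dictated by hypothesis $\mathcal{H}_3$: replace the Gaussian local limit theorem by the $\alpha$-stable local limit theorem $\mathbf{Q}^*(S_{w_n}\in dy)\sim n^{-d/\alpha}L_{\alpha}(y/n^{1/\alpha})\,dy$, and replace the first-moment tail estimate on the brothers of the spine (which relied on $\sigma^2<+\infty$) by a fractional-moment estimate. The spine change of measure is unchanged and yields the same expression (\ref{ega1}). The notation $Y_k, Y_{f,k}, \tilde{Y}_k, \tilde{Y}_{f,k}$ carries over verbatim, and so does the overall architecture: truncate the spine sums at generation $K$, apply the local limit theorem to $S_{w_n}-S_{w_K}$, then let $n\to+\infty$ and $K\to+\infty$.

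The first nontrivial modification is the bound on $\mathbf{Q}^*(\tilde{Y}_k(z)\geq 1)$, which replaces (\ref{ega1qua}). Write $N:=|B(w_{k+1})|$ and $p_k(z):=\mathbf{P}(Z_{k-1}(A-z-\rho)\geq 1)$. Conditionally on $\mathcal{S}$ and $N$, a union bound over brothers gives
$$\mathbf{Q}^*(\tilde{Y}_k(z)\geq 1\mid \mathcal{S}, N)\leq \min(1, N p_k(z)).$$
Since $N+1$ has distribution $\nu(j)=j\mu(j)$, the assumption $\sum_j j^{1+\gamma}\mu(j)<+\infty$ for every $\gamma<\beta$ implies $\E_{\mathbf{Q}^*}[N^{\gamma}]<+\infty$ and hence $\mathbf{Q}^*(N\geq j)\leq C_{\gamma} j^{-\gamma}$. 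A direct computation (splitting $\E[\min(1,Nq)]$ at $N=1/q$ and using summation by parts) then yields
$$\E_{\mathbf{Q}^*}[\min(1, Nq)]\leq C'_{\gamma}\, q^{\gamma}\qquad\text{for every } q\in (0,1],\ \gamma<\beta,\ \gamma<1.$$
The stable local limit theorem ($\mathbf{P}(\hat{S}_{k-1}\in dy)\leq C k^{-d/\alpha}\,dy$ uniformly in $y$) gives $p_k(z)\leq C|A| k^{-d/\alpha}$. Combining these bounds,
$$\mathbf{Q}^*(\tilde{Y}_k(z)\geq 1)\leq C''_{\gamma}\, k^{-d\gamma/\alpha},$$
which is summable in $k$ exactly when $d\gamma/\alpha>1$. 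Since $\gamma$ may be chosen arbitrarily close to $\beta$, summability holds precisely under the hypothesis $d>\alpha/\beta$, and this is what pins down the critical dimension.

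With this estimate in hand, everything else follows the template of Lemma \ref{funlemma}: the contribution of generations $k>K$ to the relevant expectation in (\ref{ega2}) is $o_K(1)\,n^{-d/\alpha}$; after time-reversal (\ref{ega3}), the factor $p_{n-K}(y-x-S_{w_K})$ is replaced by $(1+o_n(1))\,n^{-d/\alpha}L_{\alpha}((y-x-S_{w_K})/n^{1/\alpha})$; smoothness and boundedness of $L_{\alpha}$ together with $\|y\|\leq C_A$ and a control on $\|S_{w_K}\|$ (for instance $\mathbf{Q}^*(\|S_{w_K}\|\geq K^{2/\alpha})=o_K(1)$ from the stable local limit estimate) let us replace the argument of $L_{\alpha}$ by $x/n^{1/\alpha}$ uniformly in $x\in B(0,Mn^{1/\alpha})$. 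The quantity $G_{A,f}$ and $I_{A,f}:=\int_A G_{A,f}(y)\,dy$ are defined as in Definition \ref{defiprat}; the infinite sums inside $G_{A,f}$ are a.s.\ finite by Borel--Cantelli applied to the new summable bound $\sum_k k^{-d\gamma/\alpha}<+\infty$. A final dominated-convergence step as $K\to+\infty$ delivers the claimed asymptotics. The main obstacle is precisely the fractional-moment control of the brothers: without $\sigma^2$ one cannot estimate $\E_{\mathbf{Q}^*}[\tilde{Y}_k(z)\mid \mathcal{S}]$ directly, so the truncation must be performed at the level of probabilities through $\min(1, N q)$, and the sharp threshold $d>\alpha/\beta$ emerges naturally by optimizing $\gamma<\beta$. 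A minor technical point is replacing the quadratic expansion (\ref{ega5bisbis}) by a first-order expansion of $L_{\alpha}$, which is routine since $L_{\alpha}$ is smooth and bounded.
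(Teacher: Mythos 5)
Your proposal is correct, and its overall architecture --- spine change of measure, truncation of the spinal sum at generation $K$, stable local limit theorem for $S_{w_n}-S_{w_K}$, negligibility of the event $\{\|S_{w_K}\|\geq K^{2/\alpha}\}$, and uniform continuity of $L_\alpha$ --- coincides with the paper's. The one place where your route genuinely differs is the fractional-moment control of the brothers' contribution to the tail generations $k>K$: you bound $\mathbf{Q}^*(Y_k\geq 1\mid\mathcal S,N)$ by a union bound $\min(1,N\sup_z\mathbf{P}(Z_{k-1}(A-z)\geq 1))$ over the $N$ brothers of the spine and then estimate $\E_{\mathbf Q^*}[\min(1,Nq)]\leq C_\gamma q^\gamma$ by splitting at $N=1/q$, whereas the paper first applies Markov's inequality with the $\gamma$-th power $\mathbf{Q}^*(\sum_k Y_k\geq 1)\leq\E_{\mathbf Q^*}[(\sum_k Y_k)^\gamma]$, then the subadditivity $(\sum_i\lambda_i)^\gamma\leq\sum_i\lambda_i^\gamma$, and then Jensen's inequality $\E_{\mathbf Q^*}[Y_k^\gamma\mid\mathcal G]\leq\E_{\mathbf Q^*}[Y_k\mid\mathcal G]^\gamma$ to reduce everything to a first conditional moment. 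Both arguments exploit the same finite fractional moment $\E_{\mathbf Q^*}[|B(w_{k+1})|^\gamma]<+\infty$ for $\gamma<\beta$, produce the same $k^{-d\gamma/\alpha}$ decay, and so isolate the same threshold $d>\alpha/\beta$; your union-bound variant is a bit more hands-on, the paper's Jensen variant a bit more streamlined. A cosmetic shortcut for your key step: since $\min(1,Nq)\leq(Nq)^\gamma$ for $0<\gamma\leq 1$, the tail-splitting and summation by parts are unnecessary and $\E[\min(1,Nq)]\leq\E[N^\gamma]\,q^\gamma$ follows in one line.
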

\begin{rem}
$I_{A,f}$ is defined as in definition \ref{defiprat} but the motion is now of Levy type. The well-definiteness of $I_{A,f}$ can be proved exactly as in the following proof of Lemma \ref{modififunlemma}.
\end{rem}
Now, let us prove Lemma \ref{modififunlemma}.
\begin{proof}
With the same notation as in the proof of Lemma \ref{funlemma} we know that
$$\begin{array}{ll}
\E_{\mathbf{P}}\left[\textbf{1}\{Z_n(A-x)\geq 1\}\exp\left(\sum\limits_{|u|=n} f(S_u+x) \right) \right]&\nonumber \vspace{0.2 cm} \\
&\hspace{-7 cm}=\E_{\mathbf{Q}^*}\left[\frac{\textbf{1}\{S_{w_n}\in A-x\}}{1+\sum\limits_{k=1}^{n}Y_{k}(S_{w_{n-k}} +x)}\exp\left(\sum\limits_{k=0}^{n}Y_{f,k}(S_{w_{n-k}}+x)\right) \right] .
\end{array}$$
Now let us prove that we can resritct the sum above to a finite number of terms, as in the proof of Lemma \ref{funlemma}. Let $K>0$. Let us consider $\gamma>0$ such that $\gamma<\beta\leq 1$. Recall the notation $\E_{\mathbf{Q}^*}\left[\cdot| \mathcal{G} \right]$ when we condition on the spine and on the number of brothers of every particle in the spine and the positions of the brothers of the spine.
Then, by conditional Markov inequality,\\
$$\begin{array}{ll}
\displaystyle\mathbf{Q}^*\left(S_{w_n}\in A-x,\sum\limits_{k=K+1}^{n}Y_k(x+S_{w_{n-k}})\geq 1\right)\\
&\hspace{-7 cm}\leq\displaystyle\E_{\mathbf{Q}^*}\left[\textbf{1}\{S_{w_n}\in A-x\}\E_{\mathbf{Q}^*}\left[\left(\sum\limits_{k=K+1}^{n}Y_k(x+S_{w_{n-k}})\right)^{\gamma} \Bigg|\mathcal{G}\right] \right].\\

\end{array}$$
We now that for any non negative numbers $(\lambda_i)_{1\leq i \leq n}$ , $\left(\sum\limits_{i=1}^n\lambda_i\right)^{\gamma}\leq \sum\limits_{i=1}^n\lambda_i^{\gamma}$ because $0<\gamma<1$.
Consequently,
\begin{align}
\displaystyle\mathbf{Q}^*\left(S_{w_n}\in A-x,\sum\limits_{k=K+1}^{n}Y_k(x+S_{w_{n-k}})\geq 1\right)\nonumber\\
&\hspace{-7 cm}\leq \displaystyle\E_{\mathbf{Q}^*}\left[\textbf{1}\{S_{w_n}\in A-x\}\sum\limits_{k=K+1}^{n}\E_{\mathbf{Q}^*}\left[Y_k(x+S_{w_{n-k}})^{\gamma} \Bigg| \mathcal{G}\right] \right].\label{inegag1}
\end{align}
However, for every $k\in\{K+1,\cdots, n\}$, by definition of $Y_k(\cdot)$,
\begin{align}
\E_{\mathbf{Q}^*}\left[Y_k(x+S_{w_{n-k}})^{\gamma} \Bigg| \mathcal{G}\right] &=\E_{\mathbf{Q}^*}\left[\left(\sum\limits_{u\in B(w_{n-k+1})} Z^u_{k-1}(A-x-S_{w_{n-k}}-\rho_u) \right)^{\gamma} \Bigg| \mathcal{G}\right]\nonumber
\end{align}
where for every $u\in B(w_{n-k+1})$, recall that $\rho_u=S_{u}-S_{w_n}$.
Then, as $0<\gamma<1$, by Jensen's inequality,
\begin{align}
\E_{\mathbf{Q}^*}\left[Y_k(x+S_{w_{n-k}})^{\gamma} \Bigg| \mathcal{G}\right] &\leq\E_{\mathbf{Q}^*}\left[\sum\limits_{u\in B(w_{n-k+1})} Z^u_{k-1}(A-x-S_{w_{n-k}}-\rho_u) \Bigg| \mathcal{G}\right]^{\gamma}\nonumber\\
&=\left(\sum\limits_{u\in B(w_{n-k+1})}\E_{\mathbf{Q}^*}\left[Z^u_{k-1}(A-x-S_{w_{n-k}}-\rho_u)\Bigg| \mathcal{G} \right]\right)^{\gamma}.\label{inegag2}
\end{align}
However, by construction of $\mathbf{Q}^*$, for every $k\in\{K+1,\cdots ,n\}$ and for every $u\in B(w_{n-k+1})$, it holds that
$$\E_{\mathbf{Q}^*}\left[Z^u_{k-1}(A-x-S_{w_{n-k}}-\rho_u)\Bigg| \mathcal{G} \right]=\E_{\mathbf{P}}\left[Z_{k-1}(A-z-\rho) \right]|_{z=x+S_{w_{n-k}},\rho=\rho_u}. $$
Then, by criticality of the branching process under $\mathbf{P}$, for every $k\in\{K+1,\cdots ,n\}$ and for every $u\in B(w_{n-k+1})$,
\begin{align}
\E_{\mathbf{Q}^*}\left[Z^u_{k-1}(A-x-S_{w_{n-k}}-\rho_u)\Bigg| \mathcal{G} \right]&=\P\bigg((k-1)^{1/\alpha}\chi\in A-z-\rho\bigg)|_{z=x+S_{w_{n-k}},\rho=\rho_u}\nonumber
\end{align}
where $\chi$ is a standard $d$-dimensional $\alpha$-stable law.
Then, as the density $L_{\alpha}$ is bounded, there exists a constant $c_{\alpha}>0$ such that for every $k\in\{K+1,\cdots ,n\}$ and for every $u\in B(w_{n-k+1})$,
\begin{align}
\E_{\mathbf{Q}^*}\left[Z^u_{k-1}(A-x-S_{w_{n-k}}-\rho_u)\Bigg| \mathcal{G} \right]&=\int_{A/(k-1)^{1/\alpha}}L_{\alpha}\left(y-\frac{x+S_{w_{n-k}}+\rho_u}{(k-1)^{1/\alpha}}\right) dy\nonumber\\
&\leq c_{\alpha}|A|k^{-d/\alpha}.\label{inegag3}
\end{align}
 Combining, (\ref{inegag1}), (\ref{inegag2}) and (\ref{inegag3}), we get

\begin{align}
\displaystyle\mathbf{Q}^*\left(S_{w_n}\in A-x,\sum\limits_{k=K+1}^{n}Y_k(x+S_{w_{n-k}})\geq 1\right)\nonumber\\
&\hspace{-7 cm}\leq c_{\alpha}^{\gamma} |A|^{\gamma}\sum\limits_{k={K+1}}^nk^{-d\gamma/\alpha}\E_{\mathbf{Q}^*}\left[\textbf{1}\{S_{w_n}\in A-x \} |B(w_{n-k+1})|^{\gamma}\right].\label{inegag4}
\end{align}
However for every $k\in\{1,\cdots, n\}$, $1+|B(w_{n-k+1})|$, that is, the number of children of $w_{n-k}$, has the distribution $\nu$ and is independent of the spine under $\mathbf{Q}^*$. We recall that $\nu$ is defined by $\nu(k)=k\mu(k)$. We assumed that $\int x^{1+\gamma}\mu(dx)<+\infty$. Thus, $\hat{c}_{\gamma}:=\int x^{\gamma}\nu(dx)<+\infty$. Therefore, together with (\ref{inegag4}), this yields

\begin{align}
\displaystyle\mathbf{Q}^*\left(S_{w_n}\in A-x,\sum\limits_{k=K+1}^{n}Y_k(x+S_{w_{n-k}})\geq 1\right)&\leq c_{\alpha}^{\gamma} |A|^{\gamma}\hat{c}_{\gamma}\sum\limits_{k={K+1}}^nk^{-d\gamma/\alpha}\times \mathbf{Q}^*\left( S_{w_n}\in A-x\right)\nonumber\\
&\leq c_{\alpha}^{1+\gamma}|A|^{1+\gamma}\hat{c}_{\gamma}n^{-d/\alpha}\sum\limits_{k={K+1}}^{+\infty}k^{-d\gamma/\alpha}.
\end{align}

Now, we need the sum to be convergent. For this, we need $d\gamma/\alpha>1$, that is, $d>\frac{\alpha}{\gamma}$. However we assumed that $d>\frac{\alpha}{\beta}$. Thus, this is possible to find such a $\gamma<\beta$. Therefore,  uniformly in $x\in B(0,Mn^{1/\alpha})$, it holds that
\begin{align}
\E_{\mathbf{P}}\left[\textbf{1}\{Z_n(A-x)\geq 1\}\exp\left(\sum\limits_{|u|=n} f(S_u+x) \right) \right]&\nonumber \vspace{0.2 cm} \\
&\hspace{-7 cm}=\E_{\mathbf{Q}^*}\left[\frac{\textbf{1}\{S_{w_n}\in A-x\}}{1+\sum\limits_{k=1}^{K}Y_{k}(S_{w_{n-k}} +x)}\exp\left(\sum\limits_{k=0}^{K}Y_{f,k}(S_{w_{n-k}}+x)\right) \right] +o_K(1)n^{-d/\alpha}.\label{rustine1}
\end{align}
Recall that for every $m\in\N^*$ the density of $S_{w_m}$ is given by $z\mapsto m^{-d/{\alpha}}L_{\alpha}(m^{-1/\alpha}z)$. Therefore, using (\ref{rustine1}) and following the same lines as in Lemma \ref{funlemma}, we get
\begin{align}
\E_{\mathbf{P}}\left[\textbf{1}\{Z_n(A-x)\geq 1\}\exp\left(\sum\limits_{|u|=n} f(S_u+x) \right) \right]&\nonumber \vspace{0.2 cm} \\
&\hspace{-7 cm}=(n-K)^{-d/\alpha}\E_{\mathbf{Q}^*}\left[\int_AL_{\alpha}((n-K)^{-1/\alpha}(y-x-S_{w_K}))F_{K,A}(y,S_{w_k},1\leq k\leq K)dy \right]\nonumber\\
&\hspace{-6.6 cm} +o_K(1)n^{-d/\alpha}.\label{rustine2}
\end{align}
Remark that
\begin{align}
\hspace{-0.4 cm}(n-K)^{-1/\alpha}(y-x-S_{w_K})&=-n^{-1/\alpha}x+ \underbrace{\frac{1-(1-K/n)^{1/\alpha}}{(n-K)^{1/\alpha}}x+ (n-K)^{-1/\alpha}(y-S_{W_K})}_{\varepsilon_{n,K,x,y}}.\label{rustine3}
\end{align}
On the event $\{||S_{W_k}||\leq K^{2/\alpha}\}$, uniformly in $x\in B(0,Mn^{1/\alpha})$, the error term $\varepsilon_{n,K,x,y}$ can be majorized in the following way

\begin{align}
||\varepsilon_{n,K,x,y}||&\leq M\times o_{n,K}(1) +(\sup_{z\in A}||z||)\times(n-K)^{-1/\alpha} +K^{2/\alpha}(n-K)^{-1/\alpha}.\nonumber\\
&=o_{n,K}(1).\label{rustine3}
\end{align}
Moreover, $L_{\alpha}$ is continuous and goes to 0 at infinity. Therefore, this function is uniformly continuous. This implies that, uniformly in $x\in B(0,Mn^{1/\alpha})$, on the event $\{||S_{W_k}||\leq K^{2/\alpha}\}$,
\begin{align}
|L_{\alpha}(n^{-1/\alpha}x)-L_{\alpha}((n-K)^{-1/\alpha}(y-x-S_{w_K}))|=o_{n,K}(1)\label{rustine4}
\end{align}
where $L_{\alpha}(-n^{-1/\alpha}x)$ was replaced by $L_{\alpha}(n^{-1/\alpha}x)$ by symmetry of $L_{\alpha}$.
Furthermore, the event $\{||S_{w_K}||\geq K^{2/\alpha}\}$ can be neglected because
\begin{align}
\mathbf{Q}^*(||S_{w_K}||\geq K^{2/\alpha})=o_K(1).\label{rustine5}
\end{align}
Therefore, combining (\ref{rustine2}), (\ref{rustine4}) and (\ref{rustine5}), we get that, uniformly in $x\in B(0,Mn^{1/\alpha})$,
\begin{align}
\E_{\mathbf{P}}\left[\textbf{1}\{Z_n(A-x)\geq 1\}\exp\left(\sum\limits_{|u|=n} f(S_u+x) \right) \right]&\nonumber \vspace{0.2 cm} \\
&\hspace{-7 cm}=n^{-d/\alpha}L_{\alpha}(n^{-1/\alpha}x)\E_{\mathbf{Q}^*}\left[\int_AF_{K,A}(y,S_{w_k},1\leq k\leq K) dy\right]\nonumber\\
&\hspace{-6.6 cm} +o_K(1)n^{-d/\alpha}+o_{n,K}(1)n^{-d/\alpha}.\label{rustine6}
\end{align}
Thanks to (\ref{rustine6}), the end of the proof goes along the same lines as in the proof of Lemma \ref{funlemma}.
\end{proof}

Thanks to this Lemma \ref{modififunlemma}, we can derive a generalized version of Theorem \ref{cv}  in the same way as before. All the proof is the same. We only have to replace Lemma \ref{funlemma} by Lemma \ref{modififunlemma}.
Moreover, proving Theorem \ref{charac} under the hypothesis $\mathcal{H}_3$ is very similar with proving the same theorem under hypotheses $\mathcal{H}_1$ or $\mathcal{H}_2$.
We only need slight modifications for the generalization of Lemma \ref{majoc1}. Let us adapt this lemma under the hypothesis $\mathcal{H}_3$.
\begin{lem} \label{modifimajoc1}
Let $A$ be a closed ball. Let us assume hypothesis $\mathcal{H}_3$. Let $\Theta$ be a cluster-invariant point process. Then,
$$
\displaystyle\underset{M\rightarrow+\infty}\lim \hspace{0.2 cm} \underset{n\rightarrow+\infty}\limsup \hspace{0.2 cm}\P\left(\int_{B(0,Mn^{1/\alpha})^c}\mathbf{P}(Z_n(A-x)\geq 1)\Theta(dx)\geq M^{-\alpha/4} \right)=0.$$
\end{lem}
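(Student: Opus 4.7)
The plan is to mimic the proof of Lemma~\ref{majoc1}, replacing the Gaussian estimate of Lemma~\ref{recu} by a polynomial-decay estimate reflecting the heavy tail of the $\alpha$-stable density $L_\alpha$. Concretely, I would first establish the following adaptation of Lemma~\ref{recu}: there exists $\kappa>0$ such that for every $M$ large enough and every $n$ large enough, for every $x\in B(0,Mn^{1/\alpha})^c$,
$$\mathbf{P}(Z_n(A-x)\geq 1)\leq \kappa\, M^{-\alpha}\,\mathbf{P}(Z_{\lfloor M^\alpha n\rfloor}(A-x)\geq 1).$$
The key point is that the time-dilation factor must be allowed to depend on $M$: a fixed $\lambda>1$ would only produce a ratio $\asymp \lambda^{-1}$ (independent of $M$), because the quotient of two $\alpha$-stable densities evaluated deep in their polynomial regime depends only on the ratio of their rescaling parameters. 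Choosing the dilation equal to $M^\alpha$ exactly compensates this and yields genuine $M^{-\alpha}$ decay.

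To prove the displayed inequality I would reuse the spinal identity
$$\mathbf{P}(Z_n(A-x)\geq 1)=\E_{\mathbf{Q}^*}\!\left[\frac{\mathbf{1}\{S_{w_n}\in A-x\}}{1+\sum_{k=1}^{n} Y_{k}(S_{w_{n-k}}+x)}\right]$$
from the proof of Lemma~\ref{funlemma}. The upper bound $\mathbf{P}(Z_n(A-x)\geq 1)\leq \mathbf{Q}^*(S_{w_n}\in A-x)$ is immediate. For the matching lower bound $\mathbf{P}(Z_n(A-x)\geq 1)\geq c\,\mathbf{Q}^*(S_{w_n}\in A-x)$, the Jensen step of Lemma~\ref{recu} (which requires a finite first moment of $Y_k$) must be replaced by the fractional-moment technique already used in Lemma~\ref{modififunlemma}: since $d>\alpha/\beta$, one picks $\gamma\in(\alpha/d,\beta)$, and the computation leading to (\ref{inegag3}) gives $\E_{\mathbf{Q}^*}[Y_{k}(b_k)^{\gamma}\mid\mathcal{G}]\lesssim k^{-d\gamma/\alpha}$, a summable sequence; Markov's inequality then produces a deterministic constant $C$ with $\mathbf{Q}^*\!\left(\sum_{k=1}^n Y_k(b_k)\leq C\;\middle|\;\mathcal{S}\right)\geq 1/2$, hence the required uniform lower bound on the conditional expectation of $(1+\sum_k Y_k(b_k))^{-1}$. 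With this two-sided control $\mathbf{P}(Z_n(A-x)\geq 1)\asymp \mathbf{Q}^*(S_{w_n}\in A-x)=\int_{A-x} n^{-d/\alpha}L_\alpha(y/n^{1/\alpha})\,dy$, the classical two-sided tail estimate $L_\alpha(y)\asymp (1+|y|)^{-d-\alpha}$ for $\alpha\in(0,2)$ yields, on $\{|x|\geq Mn^{1/\alpha}\}$, both $\mathbf{P}(Z_n(A-x)\geq 1)\lesssim n/|x|^{d+\alpha}$ and $\mathbf{P}(Z_{M^\alpha n}(A-x)\geq 1)\gtrsim M^\alpha n/|x|^{d+\alpha}$ (using $(M^\alpha n)^{1/\alpha}=Mn^{1/\alpha}\leq |x|$), whose ratio is $O(M^{-\alpha})$ uniformly in $x$.

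Given this inequality, the lemma is immediate. Setting $W_m:=\int \mathbf{P}(Z_m(A-x)\geq 1)\,\Theta(dx)$, Markov gives
$$\P\!\left(\int_{B(0,Mn^{1/\alpha})^c}\!\mathbf{P}(Z_n(A-x)\geq 1)\,\Theta(dx)\geq M^{-\alpha/4}\right)\leq \P\!\left(W_{\lfloor M^\alpha n\rfloor}\geq \kappa^{-1}\, M^{3\alpha/4}\right),$$
and the right-hand side tends to $0$ as $M\to\infty$ uniformly in $n$ by the tightness of the family $\{W_m:m\in\N\}$, which follows from Lemma~\ref{tightness}, whose proof uses only cluster-invariance of $\Theta$ and carries over verbatim to hypothesis $\mathcal{H}_3$. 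The main obstacle is really the first step: obtaining the uniform lower bound on the denominator $1+\sum_k Y_k$ through the fractional-moment estimate, and then extracting a comparison of the two $\alpha$-stable tails clean enough that the explicit dependence on the far position $x$ cancels in the ratio; once these are in hand, the rest is a line-by-line transcription of the proofs of Lemmas~\ref{recu} and~\ref{majoc1}.
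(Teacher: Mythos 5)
Your proposal is correct and follows essentially the same route as the paper: one proves an $M$-dependent time-dilation analogue of Lemma~\ref{recu} (the paper's Lemma~\ref{modifirecu} dilates time by $M^{\alpha/2}$ to get ratio $M^{-\alpha/2}$, while your $M^\alpha$ dilation gives ratio $M^{-\alpha}$ — either decay beats the threshold $M^{-\alpha/4}$), then combines it with the tightness of $\bigl(\int\mathbf{P}(Z_m(A-x)\geq 1)\Theta(dx)\bigr)_m$ from Lemma~\ref{tightness} via a Markov-type bound. Your explicit replacement of the Jensen/first-moment lower bound on $\E_{\mathbf{Q}^*}\bigl[\bigl(1+\sum_k Y_k\bigr)^{-1}\bigr]$ by the $\gamma$-fractional-moment Markov estimate of Lemma~\ref{modififunlemma} correctly handles the infinite second moment of $\mu$ under $\mathcal{H}_3$, a step the paper only alludes to with the phrase ``following the same lines as in the proof of Lemma~\ref{recu}.''
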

By a careful reading, one can convince oneself that one can prove a generalized version of Theorem \ref{charac} thanks to this generalized Lemma \ref{modifimajoc1}.
This Lemma \ref{modifimajoc1} is a direct consequence of Lemma \ref{tightness} and the following Lemma \ref{modifirecu} which is a generalized version of Lemma \ref{recu}.
\begin{lem}\label{modifirecu}
We assume hypothesis $\mathcal{H}_3$. Let $A$ be a closed ball.
There exists $C>0$ such that for every $M\geq1$, for every $n$ large enough and for every $x\in B(0,M n^{1/\alpha})^c$,\\
$$\mathbf{P}(Z_n(A-x)\geq 1)\leq C\frac{\mathbf{P}(Z_{M^{\alpha/2}n}(A-x)\geq 1)}{M^{\alpha/2}}.$$
\end{lem}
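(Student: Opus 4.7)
My plan is to mimic the structure of the proof of Lemma \ref{recu}, replacing the Gaussian heat-kernel estimates by the polynomial tail decay of the $\alpha$-stable density $L_\alpha$. The argument proceeds in three steps.

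\textbf{Step 1 (Sandwich bound).} With the notation of the proof of Lemma \ref{modififunlemma}, the spine identity
$$\mathbf{P}(Z_n(A-x)\geq 1)=\E_{\mathbf{Q}^*}\left[\textbf{1}\{S_{w_n}\in A-x\}\,\E_{\mathbf{Q}^*}\Bigl[(1+\sum_{k=1}^n Y_k(b_k))^{-1}\Bigr]_{b_k=S_{w_{n-k}}+x}\right]$$
still holds, using the independence of the spine from $(Y_k(\cdot))_{k\in\N^*}$. Obviously this gives the upper bound $\mathbf{P}(Z_n(A-x)\geq 1)\leq \mathbf{Q}^*(S_{w_n}\in A-x)$. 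For the lower bound, I cannot use Jensen as in (\ref{egalem2}) because $\mu$ has no second moment. Instead, I pick $\gamma\in(\alpha/d,\beta)$ (possible since $d>\alpha/\beta$) and use the estimate proved in the course of Lemma \ref{modififunlemma}:
$$\E_{\mathbf{Q}^*}\Bigl[\bigl(\sum_{k=1}^n Y_k(b_k)\bigr)^{\gamma}\Bigr]\leq\sum_{k=1}^{+\infty}\E_{\mathbf{Q}^*}\bigl[Y_k(b_k)^{\gamma}\bigr]\leq c_{\alpha}^{\gamma}|A|^{\gamma}\hat{c}_{\gamma}\sum_{k=1}^{+\infty} k^{-d\gamma/\alpha}=:C_0<+\infty,$$
uniformly in $(b_k)$. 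By Markov's inequality, $\mathbf{Q}^*(\sum_k Y_k(b_k)\geq (2C_0)^{1/\gamma})\leq 1/2$, hence
$$\E_{\mathbf{Q}^*}\Bigl[(1+\sum_{k=1}^n Y_k(b_k))^{-1}\Bigr]\geq\frac{1}{2(1+(2C_0)^{1/\gamma})}=:c_*>0.$$
This yields the analogue of (\ref{branchtonorm}):
$$c_*\,\mathbf{Q}^*(S_{w_n}\in A-x)\leq \mathbf{P}(Z_n(A-x)\geq 1)\leq\mathbf{Q}^*(S_{w_n}\in A-x).$$

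\textbf{Step 2 (Comparison of spine densities).} By $\alpha$-stable scaling, $\mathbf{Q}^*(S_{w_n}\in A-x)=\int_{A-x} n^{-d/\alpha}L_\alpha(y/n^{1/\alpha})\,dy$. The key analytic input is that the density $L_\alpha$ of a spherically symmetric $\alpha$-stable law with $\alpha\in(0,2)$ satisfies the two-sided bound
$$\kappa_1\,|z|^{-(d+\alpha)}\leq L_\alpha(z)\leq\kappa_2\,|z|^{-(d+\alpha)}\qquad\text{for }|z|\geq 1,$$
and is continuous and bounded on $\R^d$ (a classical fact; see \cite{Sato}). If $\|x\|\geq Mn^{1/\alpha}$ and $M$ is large enough that $A-x$ lies in the region $\{|y/n^{1/\alpha}|\geq 1\}$ on which the polynomial bound applies (and where $L_\alpha$ varies by a bounded factor over $A-x$), then
$$\mathbf{Q}^*(S_{w_n}\in A-x)\asymp |A|\,n\,\|x\|^{-(d+\alpha)}.$$
Applying the same estimate at time $M^{\alpha/2}n$ (and noting $(M^{\alpha/2}n)^{1/\alpha}=M^{1/2}n^{1/\alpha}$, so the condition $\|x\|\geq M^{1/2}(M^{\alpha/2}n)^{1/\alpha}$ is still satisfied and the polynomial regime applies) gives
$$\mathbf{Q}^*(S_{w_{M^{\alpha/2}n}}\in A-x)\asymp |A|\,M^{\alpha/2}n\,\|x\|^{-(d+\alpha)}.$$
Dividing produces $\mathbf{Q}^*(S_{w_n}\in A-x)\leq C''\,M^{-\alpha/2}\,\mathbf{Q}^*(S_{w_{M^{\alpha/2}n}}\in A-x)$.

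\textbf{Step 3 (Conclusion).} Combining the upper bound from Step 1 at time $n$ with the lower bound at time $M^{\alpha/2}n$ and the comparison from Step 2,
$$\mathbf{P}(Z_n(A-x)\geq 1)\leq \mathbf{Q}^*(S_{w_n}\in A-x)\leq \frac{C''}{M^{\alpha/2}}\mathbf{Q}^*(S_{w_{M^{\alpha/2}n}}\in A-x)\leq \frac{C''/c_*}{M^{\alpha/2}}\,\mathbf{P}(Z_{M^{\alpha/2}n}(A-x)\geq 1),$$
which is the claim with $C=C''/c_*$.

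The main obstacle is Step 1: unlike in Lemma \ref{recu}, Jensen's inequality is not directly available because the expectation $\E_{\mathbf{Q}^*}[\sum_k Y_k(b_k)]$ can be infinite when $\mu$ has no second moment. The fix is to replace expectation-control by a $(1+\gamma)$-moment truncation argument using the range $\gamma\in(\alpha/d,\beta)$ permitted by the assumption $d>\alpha/\beta$. Step 2 is routine once the polynomial tails of $L_\alpha$ are in hand, but one must be careful to apply them only in the regime where $|x|/n^{1/\alpha}$ is at least a fixed multiple of the diameter of $A$, which is exactly what the hypothesis $M\geq 1$ (and $n$ large) secures.
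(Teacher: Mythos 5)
Your proof is correct and takes essentially the same route as the paper: sandwich $\mathbf{P}(Z_n(A-x)\geq 1)$ between constant multiples of $\mathbf{Q}^*(S_{w_n}\in A-x)$, then compare the stable densities at times $n$ and $M^{\alpha/2}n$ using the polynomial tail of $L_\alpha$. Your Step~2 and Step~3 mirror the paper's computation (the paper uses the Blumenthal--Getoor asymptotic $L_\alpha(z)\sim c_{\alpha,d}\|z\|^{-d-\alpha}$ rather than a two-sided pointwise bound, but the effect is identical and your two-sided bound is a true and standard strengthening).

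The one place where you genuinely depart from the written proof is Step~1, and it is to your credit. The paper disposes of the sandwich bound by saying ``following the same lines as in the proof of Lemma~\ref{recu},'' but that proof's lower bound rests on Jensen applied to $\E_{\mathbf{Q}^*}\bigl[1+\sum_k Y_k(b_k)\bigr]$, which is finite only because $\sigma^2<\infty$; under $\mathcal{H}_3$ this first moment can be infinite, so the argument does not transfer verbatim. Your replacement --- bounding $\E_{\mathbf{Q}^*}\bigl[(\sum_k Y_k)^\gamma\bigr]$ uniformly for $\gamma\in(\alpha/d,\beta)$ via the estimates already proved in Lemma~\ref{modififunlemma}, then Markov to get $\mathbf{Q}^*(\sum_k Y_k\geq (2C_0)^{1/\gamma})\leq 1/2$, and finally a restricted expectation to get the lower constant $c_*$ --- is exactly the right fix, and it makes explicit something the paper leaves to the reader. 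One small caveat (which the paper's own wording also has): at $M$ near~$1$ the region $A-x$ need not lie inside $\{|y|\geq n^{1/\alpha}\}$, so your Step~2 bound as written really only kicks in for $M$ above a fixed threshold; this is harmless because for bounded $M$ the claimed inequality is trivially absorbed into $C$, but it would be cleaner to say so.
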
 
\begin{proof}
As $d>\alpha/\beta$, following the same lines as in the proof of Lemma \ref{recu}, there exists a positive constant $C_{A,d,\alpha,\beta}$ such that for every $n\in\N^*$ and for every $x\in B(0,M n^{1/\alpha})^c$,
$$\begin{array}{ll}
\displaystyle\frac{\mathbf{P}\left( Z_n(A-x)\geq 1\right)}{\mathbf{P}\left( Z_{M^{\alpha/2}n}(A-x)\geq 1\right)}&\leq \frac{\displaystyle C_{A,d,\alpha,\beta}\P(n^{1/\alpha}\chi\in A-x)}{\displaystyle \P((M^{\alpha/2}n)^{1/\alpha}\chi\in A-x)}\\
&= C_{A,d,\alpha,\beta}\frac{\displaystyle\int_{(A-x)/n^{1/\alpha}}L_{\alpha}(y)dy}{\displaystyle\int_{(A-x)/(M^{\alpha/2}n)^{1/\alpha}}L_{\alpha}(y)dy}
\end{array}$$
where we recall that $\chi$ is a standard $d$-dimensional $\alpha$-stable law and $L_{\alpha}$ is its density.
According to \cite{BluGet}, there exists a constant $c_{\alpha,d}$ such that\\
$$L_{\alpha}(x)\underset{||x||\rightarrow +\infty}\sim c_{\alpha,d} ||x||^{-d-\alpha}.$$
However for $x\in B(0,Mn^{1/\alpha})^c$, $||n^{-1/\alpha}M^{-1/2}x||\geq \sqrt{M}$. Consequently, if $M\geq1$, we are allowed to use the equivalent just above.
Therefore, there exists a constant $C>0$ such that for every $n\in\N^*$ and for every $x\in B(0,M n^{1/\alpha})^c$,\\
$$\begin{array}{ll}
\displaystyle\frac{\mathbf{P}\left( Z_n(A-x)\geq 1\right)}{\mathbf{P}\left( Z_{M^{\alpha/2}n}(A-x)\geq 1\right)}
&\leq\displaystyle C\frac{\displaystyle\int_{(A-x)/n^{1/\alpha}}||y||^{-d-\alpha}dy}{\displaystyle\int_{(A-x)/(M^{\alpha/2}n)^{1/\alpha}}||y||^{-d-\alpha}dy}\vspace{0.2 cm}\\
&= C\times M^{-\alpha/2}.
\end{array}$$

\end{proof}
\section{Further discussion} \label{sec7}

Thanks to Theorem \ref{cv}, we know a local description of $\Lambda^{d,X}_{\infty}$ on every closed ball $A$. Indeed, if $P_A$ is a Poisson random variable of parameter $XI_A$, then $\Lambda^{d,X}_{\infty}(\cdot\cap A)$ can be constructed as a layering of $P_A$ independent copies of $\mathcal{N}_A$. A natural question is to know whether this is possible to describe $\mathcal{N_A}$ and $I_A$ in terms of $\Lambda_{\infty}^{d,X}$.
We prove that there exists another interesting description of $\Lambda_{\infty}^{d,X}$ which explains the meaning of $\mathcal{N}_A$ and $I_A$. Let us define $\mathbf{W}$ the set of measures $W$ on the set $\mathbf{N}$ of point measures on $\R^d$ which satisfy:
\begin{enumerate}[a)]
\item For any closed ball $A$ of $\R^d$, $W\left(\{\mu\in\mathbf{N}: \mu(A)>0 \right\})<+\infty$.
\item $W(\{\emptyset\})=0$ where $\emptyset$ is the null point measure.
\end{enumerate}
If $W\in \mathbf{W}$, we can define a Poisson point process $\Xi$ on $\mathbf{N}$ with intensity measure $W$. $\Xi$ can be written as
$$\Xi=\sum\limits_{i\in I}\delta_{N_i}$$
where $\{N_i,i\in I\}$ is a countable collection of point measures. Then we are able to define a new point process $\mathcal{E}_W$ on $\R^d$ as
$$\mathcal{E}_W=\sum\limits_{i\in I} N_i$$
where the sum of point measures means that we overlay them. $\mathcal{E}_W$ is still a locally finite point process thanks to assumption $a)$. We mention that the point process $\mathcal{E}_W$ is introduced in subsection 2.1 of \cite{MKM} in a slightly different (but equivalent) way.
\begin{prop}\label{global}
We assume hypothesis $\mathcal{H}_1$, $\mathcal{H}_2$ or $\mathcal{H}_3$. Then for every $\theta\in\R_+^*$, there exists a unique measure $W^{d,\theta}\in\mathbf{W}$ such that:
\begin{enumerate}[(i)]
\item $$\mathcal{E}_{W^{d,\theta}}\overset{law}=\Lambda_{\infty}^{d,\theta}. $$
\item For any closed ball $A$, $W^{d,\theta}(\{\mu\in\mathbf{N}:\mu(A)\geq 1\})=\theta I_A$.
\item For any closed ball $A$, for any borel set $U$ of the set of point measures on $A$, 
$$W^{d,\theta}\Bigg(\{\mu:\mu(\cdot\cap A)\in U\}\Bigg|\{\mu\in\mathbf{N}:\mu(A)\geq 1\}\Bigg)=\mathbb{P}\left(\mathcal{N}_A\in U\right).$$
\item Let $f\in\mathcal{F}_c^+(\R^d)$ whose support is a closed ball $A$. Then for every interval $I\subset \R_+^*$, $$W^{d,\theta}\left(\left\{\mu\in\mathbf{N}:\int f(x) d\mu(x)\in I \right\}\right)=\theta I_A\P\left(\int f(x)d\mathcal{N}_A(x)\in I \right).$$
\end{enumerate}
\end{prop}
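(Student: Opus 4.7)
The plan is to first establish uniqueness, then construct $W^{d,\theta}$ from the family $\{\mathcal{N}_A\}$ using the consistency relations of Lemmas \ref{quot} and \ref{lemi}, and finally verify property (i) through a Laplace functional calculation.

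For uniqueness, I observe that properties (ii) and (iii) completely determine the restriction of any candidate $W \in \mathbf{W}$ to each set of the form $\{\mu : \mu(A) \geq 1\}$. Since these sets exhaust $\mathbf{N}\setminus\{\emptyset\}$ as $A$ grows, and since $W(\{\emptyset\}) = 0$ by condition b) in the definition of $\mathbf{W}$, properties (ii) and (iii) alone already pin $W$ down. Hence property (i) is in fact a consequence of (ii)--(iv) rather than an independent constraint, and the main work is existence.

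For existence, fix an exhaustion $(B_n)_{n\geq 1}$ of $\R^d$ by closed balls with $B_n\nearrow\R^d$. By Lemma \ref{lemi}, the conditional law of $\mathcal{N}_{B_n}(\cdot\cap B_m)$ given $\mathcal{N}_{B_n}(B_m)\geq 1$ coincides with $\mathcal{L}(\mathcal{N}_{B_m})$ for $m\leq n$, and by Lemma \ref{quot} this conditioning event has probability $I_{B_m}/I_{B_n}$. Thus the finite measures $\theta I_{B_n}\,\P(\mathcal{N}_{B_n}\in\cdot)$ form a consistent projective system in the sense that, for every $m\leq n$ and every measurable $U$,
\[
\theta I_{B_n}\,\P\bigl(\mathcal{N}_{B_n}(\cdot\cap B_m)\in U,\ \mathcal{N}_{B_n}(B_m)\geq 1\bigr) = \theta I_{B_m}\,\P(\mathcal{N}_{B_m}\in U).
\]
A Kolmogorov-style extension then yields a unique $\sigma$-finite measure $W^{d,\theta}$ on $(\mathbf{N},\mathfrak{N})$ satisfying (ii) and (iii); property (iv) is an immediate consequence of the definition of $\mathcal{N}_A$ together with (ii) and (iii).

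To verify (i), I compute the Laplace functional of $\mathcal{E}_{W^{d,\theta}}$ using the standard Poisson cluster formula (cf.\ Section 4 of \cite{Kallenberg_modern}): for $f\in\mathcal{F}_c^+(\R^d)$ with support in a closed ball $A$,
\[
\E\Bigl[e^{-\int f\, d\mathcal{E}_{W^{d,\theta}}}\Bigr] = \exp\!\Bigl(-\int_{\mathbf{N}} (1-e^{-\int f\, d\mu})\, W^{d,\theta}(d\mu)\Bigr).
\]
Because $\int f\, d\mu$ depends only on $\mu(\cdot\cap A)$ and vanishes when $\mu(A)=0$, the integral localizes to $\{\mu(A)\geq 1\}$ and, by property (iii), equals $\theta I_A\bigl(1-\E[e^{-\int f\, d\mathcal{N}_A}]\bigr) = -\theta I_A R_{f,A}$. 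Comparing with Proposition \ref{laplace} gives $\E[e^{-\int f\, d\mathcal{E}_{W^{d,\theta}}}] = \E[e^{-\int f\, d\Lambda_\infty^{d,\theta}}]$ for every such $f$, and Lemma \ref{egaloi} concludes that $\mathcal{E}_{W^{d,\theta}} \overset{law}= \Lambda_\infty^{d,\theta}$.

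I expect the main obstacle to be the measure-theoretic construction in the second paragraph: one must promote the consistent family of \emph{conditional} distributions $\mathcal{N}_{B_n}$ (each a probability on point measures supported in $B_n$) into a single $\sigma$-finite measure $W^{d,\theta}$ living on $\mathbf{N}$, i.e.\ on point measures on all of $\R^d$. Carrying this out rigorously requires either a standard projective limit argument on the Polish spaces $\mathbf{N}_{B_n}$, or an ad hoc direct construction starting from the branching process itself --- for instance as a vague limit of the intensity measures $\theta\int \P(\mathcal{C}_n^{(x)}\in\cdot,\,\mathcal{C}_n^{(x)}\neq\emptyset)\, dx$, where $\mathcal{C}_n^{(x)}$ is the cluster of descendants at generation $n$ of a single ancestor located at $x$, and checking via Lemma \ref{intensity} that this limit sits in $\mathbf{W}$ and matches the characterisation by (ii)--(iv).
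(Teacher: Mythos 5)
Your proposal inverts the logical order used by the paper, and this is a genuine structural difference rather than a restatement. The paper first notes that $\Lambda_\infty^{d,\theta}$ is infinitely divisible (as a cluster process started from an infinitely divisible point process, here a Poisson process) and invokes Theorem 2.1.10 of \cite{MKM} to obtain \emph{existence} of a canonical measure $W^{d,\theta}\in\mathbf{W}$ satisfying (i) directly; properties (ii), (iii), (iv) are then deduced from (i) via the vanishing probability $\P(\Lambda_\infty^{d,\theta}(A)=0)=e^{-\theta I_A}$ from Theorem \ref{cv}, the Poisson exponential formula, and Proposition \ref{laplace} — essentially the same Laplace-functional computation you use in your third paragraph, but run in the opposite direction. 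You instead take (ii)--(iii) as the primitive data, assemble $W^{d,\theta}$ directly from the compatible family $\{\theta I_A\,\P(\mathcal{N}_A\in\cdot)\}$ using Lemmas \ref{quot} and \ref{lemi} for projective consistency, and then derive (i). Both routes rest on the same ingredients (Lemmas \ref{quot}, \ref{lemi}, Proposition \ref{laplace}, the exponential formula); the paper's is shorter because infinite divisibility plus Theorem 2.1.10 of \cite{MKM} hands it the measure and its uniqueness for free, whereas your route buys a self-contained construction but has to re-do the extension work by hand.

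On that extension work: you have correctly identified the gap yourself. The consistency identity you write is right — it follows by multiplying Lemma \ref{lemi} (giving $\P(\mathcal{N}_{B_n}(\cdot\cap B_m)\in U\,|\,\mathcal{N}_{B_n}(B_m)\geq 1)=\P(\mathcal{N}_{B_m}\in U)$) by Lemma \ref{quot} (giving the conditioning probability $I_{B_m}/I_{B_n}$). But the step from this projective family of finite measures of \emph{unbounded total mass} (recall $I_{B(0,r)}\to\infty$ by Proposition \ref{I}) to a single $\sigma$-finite $W^{d,\theta}$ on $\mathbf{N}$ is not a literal Kolmogorov extension: you must first fix $m$, build the probability measure $W^{d,\theta}(\cdot\,|\,\mu(B_m)\geq 1)$ as a projective limit over $n\geq m$, verify agreement across different $m$, and then glue the pieces by $\sigma$-finiteness. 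This is doable but requires several pages of measure-theoretic bookkeeping that your sketch leaves open. A second, more minor, imprecision is in the uniqueness paragraph: (iii) for a single ball $A$ pins down only the pushforward of $W(\cdot\,|\,\mu(A)\geq 1)$ under $\mu\mapsto\mu(\cdot\cap A)$, not the full conditional law; uniqueness does follow, but only after you run (ii)--(iii) over an exhausting sequence $B_n\uparrow\R^d$ and apply a $\pi$-$\lambda$ argument, which you should say explicitly. As it stands, your proposal is a correct alternative outline with a clearly flagged and real gap in the existence construction; the paper's route via infinite divisibility simply sidesteps that gap.
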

\begin{proof}
$\Lambda_{\infty}^{d,\theta}$ is infinitely divisible. Indeed, it is obtained through a branching process starting from a Poisson point process which is infinitely divisible. Then, Theorem 2.1.10 in \cite{MKM} gives (i). By the local description of $\Lambda_{\infty}^{d,\theta}$ given in Theorem \ref{cv}, for any closed ball $A$, it holds that
\begin{align}
e^{-\theta I_A}&=\P\left(\Lambda_{\infty}^{d,\theta}(A)=0 \right)\nonumber\\
&=\P\left(\mathcal{E}_{W^{d,\theta}}(A)=0\right).\label{corfin}
\end{align}
Now, let us recall that $\mathcal{E}_{W^{d,\theta}}$ is defined through a Poisson point process $\Xi=\sum_{i\in I}\delta_{N_i}$ on $\mathbf{N}$ with intensity $W^{d,\theta}$. Then, from (\ref{corfin}), we deduce
\begin{align*}
e^{-\theta I_A}&=\P\left(N_i(A)=0,\forall i\in I \right)\\
&=\P\left(\Xi(\{\mu:\mu(A)\geq 1\})=0\right).
\end{align*}
Therefore, as $\Xi$ is a Poisson point process on $\mathbf{N}$ with intensity $W^{d,\theta}$,
\begin{align*}
e^{-\theta I_A}=\P\left(\Xi(\{\mu:\mu(A)\geq 1\})=0\right)=e^{-W^{d,\theta}(\{\mu:\mu(A)\geq 1\})}.
\end{align*}
This proves (ii). Now, let us consider a closed ball $A$ and $f\in\mathcal{F}_c^+(\R^d)$ such that $supp(f)\subset A$. For every point measure $\mu$, let us define $H_f(\mu)=\int f(x) d\mu(x)$. Then, by definition of $\mathcal{E}_{W^{d,\theta}}$.
\begin{align}
\E\left(\exp\left(-\int f(x) d\mathcal{E}_{W^{d,\theta}}(x) \right) \right)&=\E\left[\exp\left(-\int H_f(\mu)d\Xi(\mu) \right)\right].\label{identitefinale1}
\end{align}
However, $\Xi$ is a Poisson point process with intensity $W^{d,\theta}$. Therefore, by Lemma 3.1 in \cite{Kallenberg_modern}, (\ref{identitefinale1}) yields
\begin{align}
\E\left(\exp\left(-\int f(x) d\mathcal{E}_{W^{d,\theta}}(x) \right) \right)&=\exp\left(-\int (1-\exp(-H_f(\mu))dW^{d,\theta}(\mu) \right)\nonumber\\
&=\exp\left(\int \exp\left(-\int f(x) d\mu(x)\right) -1 dW^{d,\theta}(\mu) \right)\nonumber\\
&=\exp\left(\int \left(\exp\left(-\int f(x) d\mu(x)\right) -1\right)\textbf{1}\{\mu(A)\geq 1\} dW^{d,\theta}(\mu) \right)\nonumber
\end{align}
where the last equality stems from the fact that $supp(f)\subset A$. By (ii), this implies that
\begin{align}
\E\left(\exp\left(-\int f(x) d\mathcal{E}_{W^{d,\theta}}(x) \right) \right)&\nonumber\\
&\hspace{-4.5 cm}=\exp\Bigg(\theta I_A\Bigg(\int \exp\left(-\int f(x)d\mu(x)\right)dW^{d,\theta}(\mu|\mu(A)\geq 1) -1\Bigg) \Bigg).\label{identitefinale2}
\end{align}
However, by (i), we know that
\begin{align}
\E\left(\exp\left(-\int f(x) d\mathcal{E}_{W^{d,\theta}}(x) \right) \right)=\E\left(\exp\left(-\int f(x)d\Lambda_{\infty}^{d,\theta}(x)\right)\right). \nonumber
\end{align}
Consequently, by Proposition \ref{laplace}, we get
\begin{align}
\E\left(\exp\left(-\int f(x) d\mathcal{E}_{W^{d,\theta}}(x) \right) \right)=\exp\Bigg(\theta I_A\Bigg( \E\bigg[ \exp\left(-\int f(x)d\mathcal{N}_A(x) \right)\bigg]-1\Bigg) \Bigg).\label{identitefinale3}
\end{align}
Equalizing (\ref{identitefinale2}) and (\ref{identitefinale3}), we get
\begin{align}
\int\exp\left(-\int f(x)d\mu(x)\right)dW^{d,\theta}(\mu|\mu(A)\geq 1) =\E\bigg[ \exp\left(-\int f(x)d\mathcal{N}_A(x) \right)\bigg].
\end{align}
This proves (iii). Moreover, (iv) is a direct consequence of (ii) and (iii).

\end{proof}
An interesting property of the measure $W^{d,\theta}$ is that it is $\sigma$-finite but not finite. This is proved by the following proposition. We assume $\mathcal{H}_1$ for clarity of the discussion.
\begin{prop} \label{I}
Assuming hypothesis $\mathcal{H}_1$, $I_{B(0,r)}$ converges toward infinity when $r$ is going to infinity.
\end{prop}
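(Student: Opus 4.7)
The plan is to assume for contradiction that $L := \lim_{r \to \infty} I_{B(0,r)} < \infty$ and derive a contradiction from Theorem~\ref{charac}. The limit exists in $[0,\infty]$ because $I_{B(0,r)}$ is non-decreasing in $r$: if $r_1 \leq r_2$, then $\mathbf{P}(Z_n(B(0,r_1)) \geq 1) \leq \mathbf{P}(Z_n(B(0,r_2)) \geq 1)$, and Proposition~\ref{equiprob} yields $I_{B(0,r_1)} \leq I_{B(0,r_2)}$ after multiplying by $n^{d/2}$ and letting $n \to \infty$. Moreover, $L > 0$ since $I_{B(0,r)} > 0$ for any $r > 0$ (the constant $c_d I_{B(0,r)}$ is the limit of the positive quantity $n^{d/2}\mathbf{P}(Z_n(B(0,r)) \geq 1)$).

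First I would exploit the Poisson-compound structure of $\Lambda_\infty^{d,1}$. Applying Proposition~\ref{laplace} with $f=0$ (equivalently, using Theorem~\ref{cv}), one obtains $\mathbf{P}(\Lambda_\infty^{d,1}(B(0,R)) = 0) = e^{-I_{B(0,R)}}$. Since $\{\Lambda_\infty^{d,1} = 0\} = \bigcap_R \{\Lambda_\infty^{d,1}(B(0,R)) = 0\}$ is a decreasing intersection, monotone convergence gives
$$p := \mathbf{P}(\Lambda_\infty^{d,1} = 0) \;=\; \lim_{R \to \infty} e^{-I_{B(0,R)}} \;=\; e^{-L} \;\in\; (0,1).$$

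Next I would consider the conditional law $\tilde{\Lambda}$ of $\Lambda_\infty^{d,1}$ given $\{\Lambda_\infty^{d,1} \neq 0\}$ and show that $\tilde{\Lambda}$ is itself cluster-invariant. Let $K(\mu,\cdot)$ be the Markov kernel corresponding to one step of branching. The crucial point is that $K(0,\cdot) = \delta_0$: the null measure is an absorbing state. Cluster-invariance of $\Lambda_\infty^{d,1}$ reads $\mathcal{L}(\Lambda_\infty^{d,1}) = \int K(\mu,\cdot)\,d\mathcal{L}(\Lambda_\infty^{d,1})(\mu)$. Decomposing $\mathcal{L}(\Lambda_\infty^{d,1}) = p\delta_0 + (1-p)\mathcal{L}(\tilde{\Lambda})$ and using $K(0,\cdot) = \delta_0$, the $p\delta_0$ contributions cancel on both sides and one is left with $\mathcal{L}(\tilde{\Lambda}) = \int K(\mu,\cdot)\,d\mathcal{L}(\tilde{\Lambda})(\mu)$, proving cluster-invariance of $\tilde{\Lambda}$.

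Finally, Theorem~\ref{charac} gives $\tilde{\Lambda} \overset{\text{law}}{=} \Lambda_\infty^{d,Y}$ for some almost-surely finite non-negative random variable $Y$. For any deterministic $y > 0$, the same computation as above yields $\mathbf{P}(\Lambda_\infty^{d,y} = 0) = e^{-yL}$, so conditioning on $Y$ produces
$$\mathbf{P}(\Lambda_\infty^{d,Y} = 0) \;=\; \mathbf{P}(Y=0) + \mathbb{E}\bigl[\mathbf{1}\{Y>0\}\,e^{-YL}\bigr] \;>\; 0,$$
since $L < \infty$ and $Y < \infty$ a.s.\ together force $e^{-YL} > 0$ whenever $Y > 0$. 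But by its very construction $\tilde{\Lambda} \neq 0$ almost surely, yielding the desired contradiction. Therefore $L = \infty$, i.e., $I_{B(0,r)} \to \infty$.

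The main obstacle is the reduction step showing that $\tilde{\Lambda}$ is cluster-invariant, which is what allows Theorem~\ref{charac} to apply; the argument is clean but depends critically on the absorbing character of the null measure under the branching kernel $K$, and on having well-defined measurability of $K$ as a Markov kernel on the space of point measures (standard, and implicit throughout the paper).
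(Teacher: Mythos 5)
Your proof is correct, but it takes a genuinely different route from the paper's. The paper argues directly from the defining expression $I_{B(0,r)}=\int_{B(0,r)}\E_{\mathbf{Q}^*}\bigl[(1+\sum_k\tilde Y_k(y-S_{w_k}))^{-1}\bigr]\,dy$: it applies Jensen's inequality pointwise, computes $\E_{\mathbf{Q}^*}[\tilde Y_k(y-S_{w_k})]$ explicitly as a Gaussian heat-kernel integral, rescales $y\mapsto ry$, and invokes Fatou's lemma to show that the resulting lower bound diverges. Your argument is instead structural and indirect: assuming $L:=\lim_r I_{B(0,r)}<\infty$, you observe $\P(\Lambda_\infty^{d,1}=\emptyset)=e^{-L}\in(0,1)$ (using $\mathcal{N}_A(A)\geq 1$ a.s., which indeed follows from Lemma~\ref{quot} with $A_1=A_2=A$), peel off the atom at $\emptyset$ using the fact that $\emptyset$ is absorbing for the branching kernel, and land on a cluster-invariant point process that is a.s.\ non-null --- contradicting the classification of Theorem~\ref{charac}, since with $L<\infty$ and $Y<\infty$ a.s.\ every $\Lambda_\infty^{d,Y}$ retains a positive probability of being null. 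The logical dependencies are in order (Theorem~\ref{charac} is proven in Section~\ref{sec5} without appeal to Proposition~\ref{I}), the measurability of the branching kernel and the local-finiteness of the conditioned chain both follow from absolute continuity of $\mathcal{L}(\tilde\Lambda)$ with respect to $\mathcal{L}(\Lambda_\infty^{d,1})$, and the finiteness of $Y$ is exactly what the tightness argument in the proof of Theorem~\ref{charac} delivers. The trade-off: the paper's proof is self-contained and elementary, needing only Lemma~\ref{funlemma}'s notation and basic inequalities; yours is shorter and conceptually illuminating (it shows $I_{B(0,r)}\to\infty$ is \emph{forced} by the consistency of the classification), but rests on the full machinery of Theorem~\ref{charac}.
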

\begin{proof}
Let $A$ be a closed ball.
Using the notation introduced in definition \ref{defiprat}, we recall that\\

$$\displaystyle I_{A}=\int_{A}\E_{\mathbf{Q}^*}\left[ \frac{1}{1+\sum\limits_{k=1}^{+\infty}\tilde{Y}_k(y-S_{w_k})}\right] dy.$$
Therefore, by Jensen's inequality,
\begin{align}
I_A&\displaystyle \geq \int_A \frac{1}{\E_{\mathbf{Q}^*}\left[1+ \sum\limits_{k=1}^{+\infty}\tilde{Y}_k(y-S_{w_k})\right]}dy.\label{inegaf1}
\end{align}
However, with the notation of the proof of Lemma \ref{funlemma}, for every $k\in\N^*$,
$$\E_{\mathbf{Q}^*}\left[\tilde{Y}_k(y-S_{w_k}) \right]=\E_{\mathbf{Q}^*}\left[\sum\limits_{u\in B(w_{k+1})} Z^u_{k-1}(A-y+S_{w_k}-\rho_u) \right].$$
Then, by construction of $\mathbf{Q}^*$, it holds that
\begin{align}
\E_{\mathbf{Q}^*}\left[\tilde{Y}_k(y-S_{w_k}) \right]=\E_{\mathbf{Q}^*}\left[|B(w_{k+1})| \right]\E_{\mathbf{Q}^*}\left[\E_{\mathbf{P}}\left[Z_{k-1}(A-y+z) \right]|_{z=S_{w_k}-\rho}\right] \label{inegaf2}
\end{align}
where $\rho$ is independent of $S_{w_k}$. Moreover $\rho$ is a standard Gaussian random variable because we assume hypothesis 
$\mathcal{H}_1$. We know that, under $\mathbf{Q}^*$, $1+|B(w_{k+1})|$ has law $\nu$ which has expectation $1+\sigma^2$. Moreover, the branching process is critical under $\mathbf{P}$. Thus, $\E_{\mathbf{P}}\left[Z_{k-1}(A-y+z) \right]=\P(\hat{S}_{k-1}\in A-y+z)$ where $\hat{S}$ is a brownian motion. Therefore, together with (\ref{inegaf2}), this yields
\begin{align}
\E_{\mathbf{Q}^*}\left[\tilde{Y}_k(y-S_{w_k}) \right]&=\sigma^2\P(\hat{S}_{2k}\in A-y)\nonumber\\
&=\sigma^2(4\pi k)^{-d/2}\int_A\exp\left(-||x-y||^2/4k \right)dx. \label{inegaf3}
\end{align}
Consequently, combining (\ref{inegaf1}) and (\ref{inegaf3}), we get
\begin{align}
I_A&\geq \displaystyle\int_A\left(\displaystyle 1+\sigma^2\sum\limits_{k=1}^{+\infty}(4\pi k)^{-d/2}\int_A \exp\left(-||x-y||^2/(4k)\right)dx\right)^{-1} dy. \label{inegaf4}
\end{align}
Now, if $A=B(0,r)$, a change of variable in (\ref{inegaf4}) yields
\begin{align}
I_{B(0,r)}&\geq r^d\int_{B(0,1)}\left(\displaystyle 1+\sigma^2\sum\limits_{k=1}^{+\infty}(4\pi k)^{-d/2}r^d\int_{B(0,1)} \exp\left(-r^{2}||x-y||^2/(4k)\right)dx\right)^{-1} dy\nonumber\\
&=\int_{B(0,1)}\left(\displaystyle r^{-d}+\sigma^2\sum\limits_{k=1}^{+\infty}(4\pi k)^{-d/2}\int_{B(0,1)} \exp\left(-r^{2}||x-y||^2/(4k)\right)dx\right)^{-1} dy. \label{inegaf5}
\end{align}

However, by the dominated convergence theorem, as $d\geq 3$, for every $y\in B(0,1)$,\\
$$\underset{r\rightarrow +\infty}\lim\hspace{0.2 cm}\displaystyle r^{-d}+\sigma^2\sum\limits_{k=1}^{+\infty}(4\pi k)^{-d/2}\int_{B(0,1)} \exp\left(-r^{2}||x-y||^2/(4k)\right)dx=0.$$

Consequently, using Fatou's Lemma in (\ref{inegaf5}),

$$\underset{r\rightarrow +\infty}\lim I_{B(0,r)}=+\infty.$$
\end{proof}
\section{Acknowledgments}
I would like to thank my Ph.D supervisor, Xinxin Chen, for her very useful pieces of advice without which this work could not have been carried out. I also want to thank Christophe Garban and Hui He whose fruitful discussions with Xinxin Chen have enlightened us a lot.
\bibliographystyle{alpha}
\bibliography{Bibli}
\end{document}